\documentclass[reqno,12pt]{amsart}

\setlength{\textwidth}{6.3in} \setlength{\oddsidemargin}{0.125in}
\setlength{\evensidemargin}{0.125in}\setlength{\textheight}{7.7in}
\usepackage[active]{srcltx}
\usepackage{color}
\usepackage{amsmath,amsthm,amssymb}
\usepackage{epsfig}
\usepackage{graphicx}
\usepackage{amssymb}
\usepackage{latexsym}
\usepackage{pdfpages}
\usepackage{mathtools}
\usepackage{enumitem}

\usepackage{ulem}

\usepackage{ifpdf}

\mathtoolsset{showonlyrefs}

\def\z{{\bf z}}

\def\divi{\hbox{\rm div\,}}

\ifpdf %if using pdfLaTeX in PDF mode
\usepackage[hidelinks]{hyperref}
\else %if using LaTeX or pdfLaTeX in DVI mode
\fi

\usepackage[usenames,dvipsnames]{pstricks}
\usepackage{epsfig}
\usepackage{pst-grad} % For gradients
\usepackage{pst-plot} % For axes

%%%%%%%%%%%%%%%%%%%

\usepackage{color}

\addtolength{\parskip}{0.2cm}

\newtheorem{theorem}{Theorem}[section]
\newtheorem{lemma}[theorem]{Lemma}
\newtheorem{definition}[theorem]{Definition}
\newtheorem{proposition}[theorem]{Proposition}
\newtheorem{corollary}[theorem]{Corollary}
\newtheorem{remark}[theorem]{Remark}

\newtheorem*{theorem*}{\it Theorem}

\def\divi{\hbox{\rm div\,}}

\def\R{\mathbb R}

\numberwithin{equation}{section}

\def\1{\raisebox{2pt}{\rm{$\chi$}}}

% macro for comments

% macro for averaged integrals

\def\XXint#1#2#3{{\setbox0=\hbox{$#1{#2#3}{\int}$}
		\vcenter{\hbox{$#2#3$}}\kern-.5\wd0}}

\newcommand{\twopartdef}[4]
{
	\left\{
	\begin{array}{ll}
		#1 & #2 \\
		#3 & #4
	\end{array}
	\right.
}

\newcommand{\threepartdef}[6]
{
	\left\{
	\begin{array}{lll}
		#1 & \mbox{if } #2 \\
		#3 & \mbox{if } #4 \\
		#5 & \mbox{if } #6
	\end{array}
	\right.
}

\makeatletter
\newcommand{\labeltext}[2]{%
  \@bsphack
  \csname phantomsection\endcsname % in case hyperref is used
  \def\@currentlabel{#1}{\label{#2}}%
  \@esphack
}
\makeatother

\begin{document}
	
\title[Strongly anisotropic Anzellotti pairings]{\bf Strongly anisotropic Anzellotti pairings \\ and their applications to the anisotropic $p$-Laplacian}
	
\author[W. G\'{o}rny]{Wojciech G\'{o}rny}
	
\address{ W. G\'{o}rny: Faculty of Mathematics, Universit\"at Wien, Oskar-Morgerstern-Platz 1, 1090 Vienna, Austria; Faculty of Mathematics, Informatics and Mechanics, University of Warsaw, Banacha 2, 02-097 Warsaw, Poland
\hfill\break\indent
{\tt  wojciech.gorny@univie.ac.at }
}
	
%
%%\hfill\break\indent
%
	
\keywords{ Linear growth, inhomogeneous growth, anisotropic $p$-Laplacian, Anzellotti pairing, duality methods\\
\indent 2020 {\it Mathematics Subject Classification:} 35K65, 35K67, 35K92, 49N15.}
	
\setcounter{tocdepth}{1}

\date{\today}
	
\begin{abstract}
In this paper, we study the parabolic and elliptic problems related to the anisotropic $p$-Laplacian operator in the case when it has linear growth on some of the coordinates. In order to define properly a notion of weak solutions and prove their existence, we first construct an anisotropic analogue of Anzellotti pairings and prove a weak Gauss-Green formula which relates the newly constructed pairing with a normal trace of a sufficiently regular vector field. 
\end{abstract}
	
\maketitle

%{\renewcommand\contentsname{Contents}
%\setcounter{tocdepth}{3}
%\tableofcontents}
	
\section{Introduction}

The study of evolution problems associated to functionals with linear growth requires quite different techniques with respect to the classical case of quadratic or power-type growth. This is mostly due to the fact that the space $BV(\Omega)$ of functions of bounded variation on a bounded Lipschitz domain $\Omega$ in $\R^N$ (i.e. functions in $L^1(\Omega)$ whose distributional gradient is a finite Radon measure), which is the natural function space for such problems, is neither separable nor reflexive. The most classical example is the total variation flow (see \cite{ACMBook}), which is the gradient flow of the functional 
\begin{equation*}
\mathcal{TV}(u) = \twopartdef{\int_\Omega |Du|}{u \in BV(\Omega) \cap L^2(\Omega);}{+\infty}{u \in L^2(\Omega) \setminus BV(\Omega).}
\end{equation*}
in the Hilbert space $L^2(\Omega)$. Here, $Du$ denotes the distributional gradient of $u$ and $|Du|$ is its total variation. On a formal level, we can associate to it the parabolic equation
\begin{equation}\label{eq:tvflow}
u_t = \Delta_1 u := \mathrm{div}\bigg( \frac{Du}{|Du|} \bigg)
\end{equation}
coupled with appropriate initial and boundary conditions. The operator $\Delta_1$ is called the $1$-Laplacian. The total variation flow was first considered in relation to image processing; the first algorithm for total variation denoising, which is a time-discretisation of the total variation flow, was introduced in 1992 by Rudin, Osher and Fatemi (see \cite{ROF}).

The main difficulty related to the $1$-Laplacian operator is that, unlike the $p$-Laplacian for $p > 1$, i.e.
\begin{equation}
\Delta_p(u) := \mathrm{div}(|\nabla u|^{p-2} \nabla u),
\end{equation}
even for $u \in W^{1,1}(\Omega)$ the $1$-Laplacian is singular on the degenerate points (where $\nabla u = 0$). Therefore, it is not immediately clear how to define the object $\frac{Du}{|Du|}$ over the set $\{ D u = 0 \}$, which might have positive Lebesgue measure. For this reason, one needs to either work directly with the variational formulation, using the notion of variational solutions (see for instance \cite{BDM1}), or find a suitable replacement for $\frac{Du}{|Du|}$; this is typically done using Anzellotti pairings introduced in \cite{Anzellotti} (see also \cite{KT0,KT}).

The basic idea behind the Anzellotti theory can be shortly described as follows. Given a vector field $\z \in L^\infty(\Omega; \mathbb{R}^N)$ and a Sobolev function $u \in W^{1,1}(\Omega)$, the pointwise product $\mathbf{z} \cdot \nabla u$ lies in $L^1(\Omega)$. It appears in the Gauss-Green formula, which is a crucial tool in applications to partial differential equations with linear growth. However, it is not well-defined for general $\z \in L^\infty(\Omega; \mathbb{R}^N)$ and $u \in BV(\Omega)$; the Anzellotti theory consists of introducing a pairing $(\z, Du)$ which is a Radon measure that generalises the pointwise product $\z \cdot \nabla u$ (under a proper joint regularity condition for $\z$ and $u$), and studying the properties of this pairing such as the validity of a co-area formula or a weak Gauss-Green formula. We shortly present the construction from \cite{Anzellotti} in Section \ref{sec:anzellotti}; for some of the more recent developments, see \cite{CF,CCT,CdC}.

Coming back to the total variation flow, the quotient $\frac{Du}{|Du|}$ appearing in equation \eqref{eq:tvflow} is replaced by a vector field $\mathbf{z} \in L^\infty(\Omega; \mathbb{R}^N)$ with divergence in $L^2(\Omega)$ such that $\| \mathbf{z} \|_\infty \leq 1$ and $(\z, Du) = |Du|$. In this sense, it was shown in \cite{ACMBook} that existence and uniqueness of solutions for initial data in $L^2(\Omega)$ follows from the classical theory of maximal monotone operators (see for instance \cite{Brezis}); the authors also prove existence and uniqueness of entropy solutions to \eqref{eq:tvflow} for initial data in $L^1(\Omega)$. Some generalisations of this approach can be found in \cite{GM2021-1,Moll}. The use of Anzellotti-type pairings as a notion to define solutions is common in other problems involving functionals of linear growth, such as the least gradient problem \cite{MRL}, the minimal surface equation \cite{SS}, the Cheeger problem \cite{ACC,CFM}, problems involving Hencky plasticity or elastoplasticity \cite{FG,KT}, hyperbolic conservation laws \cite{CF}, relativistic heat equation \cite{ACMM}, the eigenvalue problem for the $1$-Laplacian \cite{BCN}, the elliptic problem for the $1$-Laplacian with homogeneous Dirichlet boundary condition \cite{Dem,MST}, or elliptic and parabolic problems related to linear growth functionals in a general form \cite{BS,GM2022}.

The first of the main goals of the current paper is to extend the theory due to Anzellotti to a setting involving anisotropic growth. Our main motivation comes from the study of the anisotropic $p$-Laplacian operator, i.e.
\begin{equation}
\Delta_{(p_i)}(u) = \sum_{i = 1}^k \mathrm{div}_{x_i} (|\nabla_{x_i} u|^{p_i - 2} \nabla_{x_i} u)
\end{equation}
which has been typically studied with $1 < p_1 < ... < p_k$. Note that the coordinates corresponding to the same exponent are grouped together - if we simply choose $p_i$ to coincide for some $i$ this leads to a slightly different ``orthotropic'' operator (see \cite{FVV} for a discussion). The anisotropic $p$-Laplacian operator was first studied in relation to regularity of solutions and the required assumptions on the exponents. The first general results concerning this operator and others with similar growth conditions are due to Marcellini \cite{Mar1989,Mar1991}; for further developments, see for instance \cite{AF,BGM,BMS,FS}. A first counterexample showing that some assumption on the closeness of exponents is required for regularity of solutions was already given in \cite{Gia1987}. Some of the more recent results can be found in \cite{AdBF}, \cite{AC}, \cite{FVV} and \cite{FGK}. In particular, the paper \cite{FVV} establishes the main results concerning the anisotropic $p$-Laplacian evolution equation, and one of the main motivations of this paper is to develop a similar theory in the case when we have $p_i = 1$ on some of the coordinates.

When all of the exponents are greater than one, establishing existence of weak solutions to elliptic or parabolic problems involving the operator $\Delta_{(p_i)}$ is relatively straightforward and follows from the reflexivity of the respective function spaces, see \cite{BGM,FVV} or \cite{AC} for the more general case. This not the case once we allow for $p_i = 1$ on some on the coordinates. To the best of the author's knowledge, under this assumption there are currently no results in the literature concerning the parabolic problem, and for the elliptic problem the only results (see \cite{MRST}) concern the equation
\begin{equation}\label{eq:introductionellipticproblem}
-\mathrm{div}_x \bigg( \frac{D_x u}{|D_x u|} \bigg) - \mathrm{div}_y (|\nabla_y u|^{q-2} \nabla_y u) = f(x,y)
\end{equation}
governed by the $1$-Laplacian along some directions (denoted by $x$) and the $q$-Laplacian with $q > 1$ along some other directions (denoted by $y$). It is coupled with homogeneous boundary conditions. One can view the operator on the left hand side as a variant of the anisotropic $p$-Laplacian operator, in which $p_i$ only takes values $1$ and $q$. This equation corresponds to minimisation of the functional
\begin{equation}
J(u) = \int_{\Omega} |D_x u| + \frac1q \int_\Omega |\nabla_y u|^q - \int_\Omega fu
\end{equation}
plus a boundary term corresponding to the homogeneous Dirichlet boundary conditions. Then, the first term has linear growth, and establishing existence of weak solutions to problem \eqref{eq:introductionellipticproblem} requires an anisotropic version of Anzellotti pairings. The construction of such pairings was sketched in \cite{MRST} in the particular case corresponding to equation \eqref{eq:introductionellipticproblem}. The presence of uniformly elliptic terms in some directions significantly changes the problem, and in particular there is no smallness assumptions required on $f$ in order to obtain existence of solutions as for the standard $1$-Laplace equation. In light of the above discussion, our second main goal in this paper is to use the newly constructed strongly anisotropic Anzellotti pairings to prove existence of weak solutions to the parabolic and elliptic problems related to the anisotropic $p$-Laplacian operator with $p_i = 1$ on some of the coordinates.

The paper is organised as follows. In Section \ref{sec:preliminaries}, we recall the construction of standard Anzellotti pairings and recall some classical results concerning gradient flows of convex functionals in Hilbert spaces and convex duality in a setting adapted to calculus of variations. The main part of this paper is contained in Section \ref{sec:anisotropicspaces}, in which we introduce the strongly anisotropic Anzellotti pairings. The basic idea behind the construction is similar to the one in \cite{MRST}, but the definition we give is more general and we fill in the gaps in the sketch of proof given in \cite{MRST}. In particular, we give a correct proof of an anisotropic Gauss-Green formula; for more details we refer to the comment before Proposition \ref{prop:gagliardo}. We also prove a weak variant of the co-area formula for the strongly anisotropic Anzellotti pairing. Then, in Section \ref{sec:dirichlet} we apply the newly constructed pairing to introduce a notion of weak solutions to the anisotropic $p$-Laplace evolution equation 
\begin{equation}
\left\{ \begin{array}{lll} u_t (t,x) = \Delta_{(p_i)} u   \quad &\hbox{in} \ \ (0, T) \times \Omega; \\[5pt] u(t) = 0 \quad &\hbox{on} \ \ (0, T) \times  \partial\Omega; \\[5pt] u(0,x) = u_0(x) \quad & \hbox{in} \ \  \Omega, \end{array} \right.
\end{equation}
where $u_0 \in L^2(\Omega)$. Here, $\Delta_{(p_i)}$ denotes the anisotropic $p$-Laplacian operator, i.e.
\begin{equation}
\Delta_{(p_i)}(u) = \mathrm{div}_{x_1} \bigg( \frac{D_{x_1} u}{|D_{x_1} u|} \bigg) + \sum_{i = 2}^k \mathrm{div}_{x_i} (|\nabla_{x_i} u|^{p_i - 2} \nabla_{x_i} u)
\end{equation}
and prove existence of weak solutions; since the proof is already quite involved, we postpone the discussion on the qualitative properties of solutions to a separate paper. Finally, in Section \ref{sec:elliptic} we consider the elliptic equation
\begin{equation}
\left\{ \begin{array}{lll} -\Delta_{(p_i)} u = f   \quad &\hbox{in} \ \ \Omega; \\[5pt] u = 0 \quad &\hbox{on} \ \ \partial\Omega.
\end{array} \right.
\end{equation}
with $f \in L^{p'_k}(\Omega)$. We prove existence of weak solutions to this problem; in other words, we generalise the results of \cite{MRST} to the full anisotropic $p$-Laplace equation. The proof is based on an application of convex duality as in Section \ref{sec:dirichlet} and does not rely on approximations by more regular anisotropic problems as in \cite{MRST}. Since the argument is very similar to the one in the previous section, we present only a simplified version of the proof, and for a discussion on regularity of solutions we refer to \cite{MRST}.

\section{Preliminaries}\label{sec:preliminaries}

\subsection{BV functions and Anzellotti pairings}\label{sec:anzellotti}

In the minimisation of functionals with linear growth, the natural energy space to study the problem is the space of functions of bounded variation. As a preparation for the anisotropic case, in which we have linear growth only on some of the coordinates, let us recall several facts concerning functions of bounded variation (we follow the presentation in \cite{AFP}). We also recall the basic idea behind the Anzellotti construction of the weak normal traces and a weak Gauss-Green formula \cite{Anzellotti}; we prove a generalised version of these results in Section \ref{sec:anisotropicspaces}. Throughout the whole subsection, we assume that $\Omega \subset \R^N$ is an open bounded set with Lipschitz boundary.

\begin{definition}{\rm
We say that $u \in L^1(\Omega)$ is a function of bounded variation if its gradient $Du$ in the sense of distributions is a finite (vectorial) Radon measure, i.e.
\begin{equation}
\int_{\Omega} u \, \mathrm{div}(\varphi) \, dx = -\int_{\Omega} \varphi \, d Du
\end{equation}
for all $\varphi \in C_c^{\infty}(\Omega; \mathbb{R}^N)$. The space of such functions will be denoted by $BV(\Omega)$. In other words, we have
\begin{equation}
BV(\Omega) = \bigg\{ u \in L^1(\Omega): \,\, Du \in \mathcal{M}(\Omega; \R^{N}) \bigg\}.
\end{equation}
The distributional gradient $Du$ is a vector-valued measure with finite total variation
\begin{equation}
\int_\Omega |Du| = \sup \bigg\{ \int_{\Omega} \, u \, \mathrm{div} (\varphi) \, dx: \,\, \varphi \in C_c^{\infty}(\Omega; \R^N), \,\, |\varphi(x)| \leq 1
\, \, \hbox{for $x \in \Omega$} \bigg\}.
\end{equation}
The space $BV(\Omega)$ is endowed with the norm
\begin{equation}
\| u \|_{BV(\Omega)} = \| u \|_{L^1(\Omega)} +
\vert  Du \vert (\Omega).
\end{equation}
}
\end{definition}

Quite often, convergence in norm in $BV(\Omega)$ is a too strong requirement; instead, one may consider weak* convergence, i.e.
\begin{equation}
u_n \rightharpoonup u \hbox{ weakly* in } BV(\Omega)  \Leftrightarrow   u_n \rightarrow u \hbox{ in } L^1(\Omega) \hbox{ and } \sup_n \int_\Omega |Du_n| < \infty;
\end{equation}
or strict convergence, i.e.
\begin{equation}
u_n \rightarrow u \hbox{ strictly in } BV(\Omega) \Leftrightarrow   u_n \rightarrow u \hbox{ in } L^1(\Omega) \hbox{ and } \int_\Omega |Du_n| \rightarrow \int_\Omega |Du|.
\end{equation}
Then, one can prove that for every $u \in BV(\Omega)$ there exists a sequence of smooth functions which converges strictly to $u$, see for instance \cite{AFP}; unlike the Sobolev spaces, this result does not hold true if we consider norm convergence in place of strict convergence.

An important property of BV functions, which follows directly from the above characterisation of $\int_\Omega |Du|$, is the following co-area formula. From now on, for a measurable function $u: \Omega \rightarrow \R$, we denote its superlevel sets by $E_t = \{ u \geq t \}$.

\begin{theorem}
Let $u \in BV(\Omega)$. Then, for almost all $t \in \R$ the set $E_t$ has finite perimeter (i.e. $\chi_{E_t} \in BV(\Omega)$) and
\begin{equation}\label{eq:isotropiccoarea}
\int_\Omega |Du| = \int_{-\infty}^\infty \int_\Omega |D\chi_{E_t}| \, dt.
\end{equation}
Furthermore, whenever $u \in L^1(\Omega)$ is such that the expression on the right-hand side of \eqref{eq:isotropiccoarea} is finite, we have that $u \in BV(\Omega)$.
\end{theorem}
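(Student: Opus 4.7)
The plan is to prove the equality by establishing the two inequalities separately; the last assertion will come for free from the first inequality's argument.

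\emph{Step 1 (the inequality $\leq$).} I would begin from the layer cake representation
\[
u(x) = \int_0^\infty \chi_{E_t}(x) \, dt - \int_{-\infty}^0 \big( 1 - \chi_{E_t}(x) \big) \, dt,
\]
valid for any $u \in L^1(\Omega)$. For any test field $\varphi \in C_c^\infty(\Omega;\R^N)$ with $|\varphi| \leq 1$, Fubini's theorem together with the fact that constants give zero contribution when tested against $\divi \varphi$ yields
\[
\int_\Omega u \, \divi(\varphi) \, dx \,=\, \int_{-\infty}^\infty \int_\Omega \chi_{E_t} \, \divi(\varphi) \, dx \, dt \,\leq\, \int_{-\infty}^\infty \int_\Omega |D\chi_{E_t}| \, dt.
\]
Taking the supremum over admissible $\varphi$ proves the inequality $\int_\Omega |Du| \leq \int_{-\infty}^\infty \int_\Omega |D\chi_{E_t}| \, dt$. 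Since this computation only used $u \in L^1(\Omega)$, the same estimate also shows that if $u \in L^1(\Omega)$ and the right-hand side of \eqref{eq:isotropiccoarea} is finite then $u \in BV(\Omega)$, settling the last assertion of the theorem.

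\emph{Step 2 (the inequality $\geq$).} I would first treat the smooth case $u \in C^\infty(\Omega) \cap W^{1,1}(\Omega)$. By Sard's theorem, for a.e.\ $t \in \R$ the level set $\{u = t\}$ contains no critical points of $u$, so by the implicit function theorem it is a smooth embedded $(N-1)$-manifold whose $\mathcal{H}^{N-1}$-measure agrees with $\int_\Omega |D\chi_{E_t}|$ (its reduced boundary in the sense of De Giorgi coincides with it up to an $\mathcal{H}^{N-1}$-null set). The classical smooth co-area formula then gives
\[
\int_\Omega |\nabla u| \, dx = \int_{-\infty}^\infty \mathcal{H}^{N-1}(\{u = t\} \cap \Omega) \, dt = \int_{-\infty}^\infty \int_\Omega |D\chi_{E_t}| \, dt.
\]
For general $u \in BV(\Omega)$, I would pick smooth $u_n$ converging strictly to $u$ in $BV(\Omega)$ and extract a subsequence with $u_n \to u$ a.e. Since $|\{u = t\}| > 0$ for at most countably many $t$, one has $\chi_{\{u_n \geq t\}} \to \chi_{E_t}$ in $L^1(\Omega)$ for a.e.\ $t$. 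Lower semicontinuity of the perimeter under $L^1$ convergence, combined with Fatou's lemma, then yields
\[
\int_{-\infty}^\infty \int_\Omega |D\chi_{E_t}| \, dt \,\leq\, \liminf_n \int_{-\infty}^\infty \int_\Omega |D\chi_{\{u_n \geq t\}}| \, dt \,=\, \liminf_n \int_\Omega |\nabla u_n| \, dx \,=\, \int_\Omega |Du|,
\]
using the smooth co-area equality in the middle step and strict convergence at the end.

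\emph{Main obstacle.} The most delicate point is the identification, in the smooth case, of the $(N-1)$-Hausdorff measure of a regular level set with the BV perimeter of the corresponding superlevel set; this is the single geometric input that powers the rest of the argument. Once this is in hand, the passage from smooth to BV functions is a routine strict-approximation plus lower semicontinuity argument, and the converse inequality together with the last assertion reduce to an application of Fubini to the layer cake decomposition.
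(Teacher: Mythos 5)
Your proof is correct and is essentially the classical argument that the paper relies on without reproducing: it recalls this theorem from the standard references (\cite{AFP}, and the proof of \cite[Theorem 5.5.1]{EG} invoked later for the anisotropic version), where the inequality $\leq$ is obtained exactly as in your Step 1 via the layer-cake decomposition and Fubini, and the inequality $\geq$ via the smooth coarea formula combined with strict approximation and lower semicontinuity of the perimeter as in your Step 2. The only point worth adding for completeness is the measurability of $t \mapsto \int_\Omega |D\chi_{E_t}|$ (needed both for the right-hand side of \eqref{eq:isotropiccoarea} to make sense and for your use of Fatou), which follows in the standard way by taking the supremum in the definition of the total variation over a countable family of test fields.
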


Moreover, we have the following trace theorem for BV functions.

\begin{theorem}\label{thm:standardtracetheorem}
There exists a linear, continuous, and surjective operator $T: BV(\Omega) \rightarrow L^1(\partial\Omega)$ such that
\begin{equation}
\int_\Omega u \, \mathrm{div} (\varphi) \, dx + \int_\Omega \varphi \, dDu = \int_{\partial\Omega} u \, \varphi \cdot \nu^\Omega \, d\mathcal{H}^{N-1}
\end{equation}
for all $u \in BV(\Omega)$ and $\varphi \in C^1(\overline{\Omega}; \mathbb{R}^N)$. Here, we denote the trace $T(u)$ of a BV function on $\partial\Omega$ simply by $u$ (it agrees with $u$ for continuous functions), and $\nu^\Omega$ denotes the outer unit normal to $\Omega$.
\end{theorem}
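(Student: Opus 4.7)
The plan is to follow the standard localization argument (see \cite{AFP}). Using the Lipschitz regularity of $\partial\Omega$, I would cover it by finitely many open sets $U_1,\ldots,U_m$ admitting bi-Lipschitz charts $\Psi_i$ that straighten the boundary, sending $\Omega \cap U_i$ to the half-ball $B^+ := B \cap \{x_N > 0\}$ and $\partial\Omega \cap U_i$ to $B \cap \{x_N = 0\}$. Via a subordinate partition of unity it then suffices to construct $T$ on BV functions supported in $B^+$ away from the spherical part of $\partial B^+$, then transport the resulting operator and identity back to $\Omega$ using the $\Psi_i$ (which preserves BV regularity since the changes of variables are bi-Lipschitz).

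On the flat half-ball, I would define the trace by one-dimensional slicing. Writing $x = (x', x_N)$, Fubini applied to the finite Radon measure $|Du|$ shows that for $\mathcal{H}^{N-1}$-a.e.\ $x' \in B \cap \{x_N = 0\}$ the slice $v_{x'}(t) := u(x', t)$ belongs to $BV(0, \delta)$ for some $\delta > 0$, so the one-sided limit $Tu(x') := \lim_{t \to 0^+} v_{x'}(t)$ exists pointwise. Continuity of $T$ is then a direct consequence of the elementary one-dimensional bound
\begin{equation}
|v(0^+)| \leq \frac{1}{\delta}\int_0^\delta |v(t)| \, dt + |Dv|\bigl((0,\delta)\bigr),
\end{equation}
integrated over $x'$: the total variation term is controlled by $|Du|(B^+)$ via Fubini and the average term by $\|u\|_{L^1(B^+)}/\delta$, which yields the estimate $\|Tu\|_{L^1(\partial\Omega)} \leq C\|u\|_{BV(\Omega)}$ after gluing.

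The Gauss-Green identity holds trivially for $u \in C^1(\overline{\Omega})$ by the classical divergence theorem, and for arbitrary $u \in BV(\Omega)$ I would pass to the limit using strict approximation, which was recalled just before the theorem statement: choose $u_n \in C^\infty(\Omega) \cap W^{1,1}(\Omega)$ with $u_n \to u$ strictly. Then $\int u_n \, \divi(\varphi) \to \int u \, \divi(\varphi)$ by $L^1$-convergence, and $\int \varphi \, dDu_n \to \int \varphi \, dDu$ since strict convergence forces vague convergence of the vector measures $Du_n$ tested against continuous functions on $\overline{\Omega}$. The step I expect to be the main obstacle is showing that strict convergence preserves traces, namely $Tu_n \to Tu$ in $L^1(\partial\Omega)$. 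The standard route is a slicing estimate on a tubular neighbourhood $\Omega_\delta$ of $\partial\Omega$ of width $\delta$: applying the fundamental theorem of calculus along normal directions yields
\begin{equation}
\int_{\partial\Omega} |T(u_n - u)| \, d\mathcal{H}^{N-1} \leq \frac{1}{\delta} \int_{\Omega_\delta} |u_n - u| \, dx + |Du_n|(\Omega_\delta) + |Du|(\Omega_\delta),
\end{equation}
and a diagonal choice $\delta = \delta(n) \to 0$ sends the right-hand side to zero, using $u_n \to u$ in $L^1$, $|Du_n|(\Omega) \to |Du|(\Omega)$, and $|Du|(\Omega_\delta) \to 0$ by the finiteness of $|Du|$.

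Finally, surjectivity is immediate from the classical surjectivity of the Sobolev trace operator $W^{1,1}(\Omega) \to L^1(\partial\Omega)$ together with the continuous embedding $W^{1,1}(\Omega) \hookrightarrow BV(\Omega)$, which preserves traces: any $g \in L^1(\partial\Omega)$ is the $W^{1,1}$-trace of some $u$, and that same $u$ realises $g$ as its BV trace.
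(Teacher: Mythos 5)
This theorem is recalled in the paper as a classical preliminary (from the references \cite{AFP} and \cite{Giu}) and is not proved there, so there is no in-paper argument to compare against; your proposal is a correct reconstruction of the standard proof found in those references (localization by Lipschitz charts, definition of the trace by one-dimensional slicing, the collar estimate to pass the Gauss--Green identity through strict approximation, and surjectivity via Gagliardo's theorem for $W^{1,1}$). The only point deserving an extra word is that the one-sided limit defining $Tu(x')$ exists for the good (precise) representative of the slice $v_{x'}$, and that $\limsup_n |Du_n|(\Omega_\delta) \leq |Du|(\{\mathrm{dist}(\cdot,\partial\Omega)\leq\delta\}\cap\Omega)$ should be justified by combining $|Du_n|(\Omega)\to|Du|(\Omega)$ with lower semicontinuity on the open complement $\{\mathrm{dist}(\cdot,\partial\Omega)>\delta\}$ — both of which are implicit in the ingredients you list.
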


Now, for $u \in BV(\Omega)$ denote by $\frac{Du}{|Du|}$ the Radon-Nikodym derivative of the measure $Du$ with respect to the measure $\vert Du \vert$. As a consequence of the co-area formula, we have that for almost all $t \in \R$ 
$$ \frac{D \chi_{E_t}}{|D \chi_{E_t}|} = \frac{Du}{|Du|} \quad |D\chi_{E_t}|-\mbox{a.e. in } \Omega.$$
We now recall the definition of classical Anzellotti pairings. 
Denote by $p'$ the dual exponent to $p$. Following \cite{Anzellotti}, we introduce the space
\begin{equation}
\label{xw}
X_p(\Omega) = \{ \z \in L^{\infty}(\Omega; \R^N):
\divi(\z)\in L^p(\Omega) \}.
\end{equation}

\begin{definition}{\rm
For $\z \in X_p(\Omega)$ and $u \in BV(\Omega) \cap L^{p'}(\Omega)$,
define the functional $(\z,Du): C_c^{\infty}(\Omega) \rightarrow \R$
by the formula
\begin{equation}\label{Anzel1}
\langle (\z,Du),\varphi \rangle = - \int_{\Omega} u \, \varphi \, \divi(\z) \, dx -
\int_{\Omega} u \, \z \cdot \nabla \varphi \, dx.
\end{equation}
}

\end{definition}

The following result collects some of the most important properties of the pairing $(\z, Du)$, formally defined only as a distribution on $\Omega$.

\begin{proposition}
The distribution $(\z,Du)$ is a Radon measure in $\Omega$. Moreover,
\begin{equation}
\bigg\vert \int_{B} (\z,Du) \bigg\vert \leq \int_{B} |(\z,Du)| \leq \| \z
\|_{\infty}
\int_{B} \vert Du \vert
\end{equation}
for any Borel set $B \subseteq \Omega$. In particular, $(\z,Du)$ is absolutely continuous with respect to $\vert Du \vert$. Furthermore, for all $w \in W^{1,1}(\Omega)$ we have
\begin{equation}
\int_{\Omega} (\z,Dw) = \int_{\Omega} \z \cdot \nabla w \, dx,
\end{equation}
so $(\z,Du)$ agrees on Sobolev functions with the dot product of $\z$ and $\nabla u$.
\end{proposition}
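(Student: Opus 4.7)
The plan is to reduce to the Sobolev case by strict approximation in $BV(\Omega)$ and then transport the obvious pointwise bound along the approximating sequence. The crucial input is that strict $BV$ convergence upgrades the weak-$*$ convergence of $Du_n$ to narrow convergence of the total variations $|Du_n|$.

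First I would establish the identity $(\z, Dw) = \z \cdot \nabla w \, dx$ for $w \in W^{1,1}(\Omega) \cap L^{p'}(\Omega)$. For $\varphi \in C_c^\infty(\Omega)$ the vector field $\varphi \z$ lies in $X_p(\Omega)$ and has compact support in $\Omega$, so classical integration by parts gives $\int_\Omega w \, \divi(\varphi \z) \, dx = -\int_\Omega \varphi \z \cdot \nabla w \, dx$. Expanding $\divi(\varphi \z) = \varphi \divi(\z) + \z \cdot \nabla \varphi$ and plugging into \eqref{Anzel1} yields
\begin{equation*}
\langle (\z, Dw), \varphi \rangle = \int_\Omega \varphi \, \z \cdot \nabla w \, dx,
\end{equation*}
which already proves the last claim and represents the pairing as the $L^1$ function $\z \cdot \nabla w$ in the Sobolev case.

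For the general case $u \in BV(\Omega) \cap L^{p'}(\Omega)$, I would choose an approximating sequence $u_n \in C^\infty(\Omega) \cap W^{1,1}(\Omega) \cap L^{p'}(\Omega)$ with $u_n \to u$ in $L^{p'}(\Omega)$ and strictly in $BV(\Omega)$; such sequences are provided by the Anzellotti-Giaquinta construction (see \cite{AFP}), after a truncation at levels $\pm k$ if needed to preserve $L^{p'}$ convergence. Since $\varphi \divi(\z) \in L^p(\Omega)$ and $\z \cdot \nabla \varphi \in L^\infty(\Omega)$, the right-hand side of \eqref{Anzel1} is continuous in $u$ with respect to $L^{p'}(\Omega) \cap L^1(\Omega)$ convergence, so $\langle (\z, Du_n), \varphi \rangle \to \langle (\z, Du), \varphi \rangle$. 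By the previous step $\langle (\z, Du_n), \varphi \rangle = \int_\Omega \varphi \, \z \cdot \nabla u_n \, dx$, and therefore
\begin{equation*}
|\langle (\z, Du_n), \varphi \rangle| \leq \|\z\|_\infty \int_\Omega |\varphi| \, d|Du_n|.
\end{equation*}
Strict convergence in $BV$, combined with Reshetnyak's continuity theorem, implies that $|Du_n|$ converges to $|Du|$ narrowly as positive Radon measures on $\Omega$, so $\int_\Omega |\varphi| \, d|Du_n| \to \int_\Omega |\varphi| \, d|Du|$ for every $\varphi \in C_c(\Omega)$. Passing to the limit yields
\begin{equation*}
|\langle (\z, Du), \varphi \rangle| \leq \|\z\|_\infty \int_\Omega |\varphi| \, d|Du|
\end{equation*}
for all $\varphi \in C_c^\infty(\Omega)$, and by density for all $\varphi \in C_c(\Omega)$. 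The Riesz representation theorem then identifies $(\z, Du)$ with a signed Radon measure on $\Omega$, and approximating $\chi_B$ from outside by compactly supported continuous functions gives the sharp bound $|(\z, Du)|(B) \leq \|\z\|_\infty |Du|(B)$ for every Borel $B \subseteq \Omega$; in particular $(\z, Du) \ll |Du|$.

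The main obstacle is that the intermediate bound $|\langle (\z, Du_n), \varphi \rangle| \leq \|\z\|_\infty \int_\Omega |\varphi| \, |\nabla u_n| \, dx$ only passes to the limit with the correct constant because the approximation is strict in $BV$, not merely weak-$*$: weak-$*$ convergence gives only lower semicontinuity of the variation on open sets and would destroy the sharp factor $\|\z\|_\infty$. All other ingredients are direct duality between $L^p$ and $L^{p'}$ combined with integration by parts for Sobolev functions.
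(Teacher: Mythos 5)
Your proof is correct, and it follows essentially the same route as the paper's own treatment: the paper does not prove this classical isotropic statement (it is recalled from Anzellotti) but proves a strongly anisotropic analogue as Proposition~\ref{prop:propertiesofanzellottipairing}, whose argument is precisely that skeleton --- establish the bound for smooth $u$ by direct integration by parts, pass to general $u\in BV\cap L^{p'}$ via a Meyers--Serrin strict approximation, and conclude by Riesz representation. Where you genuinely diverge is in the localisation step: you keep $|\varphi|$ \emph{inside} the integral, i.e.\ you prove $|\langle(\z,Du_n),\varphi\rangle|\le\|\z\|_\infty\int_\Omega|\varphi|\,d|Du_n|$ and then pass to the limit using the fact that strict convergence upgrades weak-$*$ convergence of $|Du_n|$ to narrow convergence (you invoke Reshetnyak, though the elementary ``equal masses plus weak-$*$'' argument suffices). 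This is not a cosmetic choice: it is exactly what yields the Borel-set estimate $|(\z,Du)|(B)\le\|\z\|_\infty|Du|(B)$ as stated in the Proposition, whereas the paper's anisotropic Proposition~\ref{prop:propertiesofanzellottipairing} only produces a global estimate with the sup-norm of $\varphi$ pulled out and, accordingly, only asserts a bound with global right-hand side. Two small imprecisions worth tightening: (i) the ``classical integration by parts'' against $\varphi\z$ for $\z$ merely in $L^\infty$ with $\operatorname{div}\z\in L^p$ is not literal --- it requires a cutoff localised around $\operatorname{supp}\varphi$ together with a density argument in $W^{1,1}_{\rm loc}$; and (ii) no truncation is needed to preserve $L^{p'}$ convergence in the mollification step --- the Meyers--Serrin construction already converges in $L^q$ for any $q$ whenever $u\in L^q$, which is exactly what the paper records in Proposition~\ref{prop:meyersserrin}.
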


In \cite{Anzellotti}, a weak trace on $\partial \Omega$ of the normal component of $\z \in X_p(\Omega)$ is defined. To be exact, it is shown that there exists a linear operator $\gamma : X_1(\Omega) \rightarrow L^{\infty}(\partial \Omega)$ such that $$\Vert \gamma(\z) \Vert_{\infty} \leq \Vert \z \Vert_{\infty}$$
and
$$\gamma(\z) (x) = \z(x) \cdot \nu^\Omega(x) \ \ \ \ {\rm for \ all} \ x \in
\partial
\Omega \ \ {\rm if} \
\ \z \in C^1(\overline{\Omega}; \R^N),$$
where $\nu^\Omega$ is the outer unit normal to $\partial\Omega$. We shall denote \ $\gamma (\z)(x)$ by $[\z, \nu^\Omega](x)$. Moreover, we recall the following two properties of the pairing proved in the same paper: the Gauss-Green formula, which relates the function $[\z, \nu^\Omega]$ and the measure $(\z, Dw)$, and a co-area formula for $(\z, Du)$.

\begin{theorem}
For all $\z \in X_p(\Omega)$ and $u \in BV(\Omega) \cap L^{p'}(\Omega)$, we have
\begin{equation}\label{thm:isotropicgreenformula}
\int_{\Omega} u \, \mathrm{div} (\z) \, dx + \int_{\Omega} (\z, Du) = \int_{\partial \Omega} u \, [\z, \nu^\Omega] \, d\mathcal{H}^{N-1}.
\end{equation}
\end{theorem}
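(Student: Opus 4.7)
The plan is to establish the identity first for Sobolev functions---where the pairing reduces to the pointwise product---and then to extend it to $u \in BV(\Omega) \cap L^{p'}(\Omega)$ by strict approximation. The key observation is that the formula is essentially the defining relation for the weak normal trace $[\z,\nu^\Omega]$ recalled just above the statement.

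For $u \in W^{1,1}(\Omega) \cap L^{p'}(\Omega)$ the pairing coincides with the absolutely continuous measure $\z \cdot \nabla u \, dx$ by the previous proposition, so the identity to be proved reads
$$
\int_\Omega u \, \mathrm{div}(\z) \, dx + \int_\Omega \z \cdot \nabla u \, dx = \int_{\partial\Omega} u \, [\z,\nu^\Omega] \, d\mathcal{H}^{N-1}.
$$
The left-hand side, viewed as a linear functional $T$ of the boundary trace of $u$, vanishes whenever $u \in W^{1,1}_0(\Omega)$ (by mollification and the definition of distributional divergence); moreover, Gagliardo's extension theorem associates to every $g \in L^1(\partial\Omega)$ a representative $u \in W^{1,1}(\Omega) \cap L^{p'}(\Omega)$ whose $W^{1,1}$- and $L^{p'}$-norms are bounded by a multiple of $\|g\|_{L^1(\partial\Omega)}$ (inserting a truncation if $p' < \infty$), so that $|T(g)| \leq C \|g\|_{L^1(\partial\Omega)}$. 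Combined with Theorem \ref{thm:standardtracetheorem}, this bound produces a representative $[\z,\nu^\Omega] \in L^\infty(\partial\Omega)$ for $T$ and establishes the identity on Sobolev functions.

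For the general case $u \in BV(\Omega) \cap L^{p'}(\Omega)$, I would approximate $u$ by functions $u_n \in C^\infty(\Omega) \cap W^{1,1}(\Omega)$ converging to $u$ strictly in $BV(\Omega)$ and in $L^{p'}(\Omega)$, with traces converging in $L^1(\partial\Omega)$ (a classical consequence of strict convergence). Applying the Sobolev case to each $u_n$, the first term on the left converges by $L^p$-$L^{p'}$ duality and the boundary term converges by $L^1$-$L^\infty$ duality, so the remaining term $\int_\Omega \z \cdot \nabla u_n \, dx$ also admits a limit.

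The main obstacle is identifying this limit with $\int_\Omega (\z,Du)$. Testing definition \eqref{Anzel1} against $\varphi \in C_c^\infty(\Omega)$ immediately gives $(\z,Du_n) \rightharpoonup (\z,Du)$ in the sense of distributions, and the uniform bound $|(\z,Du_n)|(\Omega) \leq \|\z\|_\infty |Du_n|(\Omega) \to \|\z\|_\infty |Du|(\Omega)$, together with weak-$*$ compactness of Radon measures, upgrades this to weak-$*$ convergence in $\mathcal{M}(\Omega)$. To pass the integral of the constant function $1$ to the limit one must exclude escape of mass towards $\partial\Omega$; this I would handle with a cut-off sequence $\varphi_k \in C_c^\infty(\Omega)$ increasing to $1$, writing $\int_\Omega (\z,Du) = \lim_k \langle (\z,Du), \varphi_k \rangle$ and using \eqref{Anzel1} together with the absolute-continuity estimate $|(\z,Du)| \leq \|\z\|_\infty |Du|$ to dominate the tails near the boundary uniformly in $n$.
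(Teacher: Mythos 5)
Your proposal is correct and is essentially the canonical route: the paper only recalls this theorem from Anzellotti, but your strategy --- constructing the weak normal trace via a Gagliardo extension supported in a thin collar, proving the identity for Sobolev functions, and then passing to $u \in BV(\Omega)\cap L^{p'}(\Omega)$ by strict (Meyers--Serrin) approximation with a cut-off/tightness argument to identify $\lim_n \int_\Omega \z\cdot\nabla u_n\,dx$ with $\int_\Omega(\z,Du)$ --- is precisely the one the paper itself follows for the anisotropic generalisation in Proposition \ref{prop:gagliardo}, Proposition \ref{prop:bilinearform}, Theorem \ref{thm:definitionofweaktrace}, Corollary \ref{cor:greenformulaforsobolevfunctions}, Lemma \ref{lem:approximationofpairing} and Theorem \ref{thm:anisotropicgaussgreen}. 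In particular, your final step of excluding escape of mass to $\partial\Omega$ is exactly Lemma \ref{lem:approximationofpairing}, where the uniform smallness of $|Du_n|$ near the boundary is supplied by strict convergence.
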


\begin{theorem}\label{thm:isotropiccoarea}
For all $\z \in X_p(\Omega)$ and $u \in BV(\Omega) \cap L^{p'}(\Omega)$, the equality
\begin{equation*}
\langle (\z, Du), \varphi \rangle = \int_{-\infty}^\infty \langle (\z, D\chi_{E_t}),\varphi \rangle \, dt
\end{equation*}
holds for all functions $\varphi \in C_c^\infty(\Omega)$. Moreover,
\begin{equation*}
\int_\Omega (\z, Du) = \int_{-\infty}^\infty \int_\Omega (\z, D\chi_{E_t}) \, dt.
\end{equation*}
\end{theorem}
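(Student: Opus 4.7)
The strategy I would follow rests on the layer-cake representation of $u$ combined with the bilinearity of the defining formula for the Anzellotti pairing. I would begin with a preliminary observation: for any $\varphi \in C_c^\infty(\Omega)$, integration by parts yields $\int_\Omega [\varphi \, \mathrm{div}(\z) + \z \cdot \nabla \varphi] \, dx = 0$, and hence $\langle (\z, Dw), \varphi \rangle = \langle (\z, D(w+c)), \varphi \rangle$ for every constant $c$ and every $w \in BV(\Omega) \cap L^\infty(\Omega)$. This shift invariance will allow me to freely replace $\chi_{E_t}$ by $\chi_{E_t} - 1$ when testing against $\varphi$.

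I would first treat the bounded case $|u| \leq M$, using the representation $u(x) = \int_0^M \chi_{E_t}(x) \, dt + \int_{-M}^0 (\chi_{E_t}(x) - 1) \, dt$. Substituting this into the defining formula $\langle (\z, Du), \varphi \rangle = -\int_\Omega u \varphi \, \mathrm{div}(\z) \, dx - \int_\Omega u \, \z \cdot \nabla \varphi \, dx$ and applying Fubini gives
\begin{equation*}
\langle (\z, Du), \varphi \rangle = \int_0^M \langle (\z, D\chi_{E_t}), \varphi \rangle \, dt + \int_{-M}^0 \langle (\z, D(\chi_{E_t} - 1)), \varphi \rangle \, dt.
\end{equation*}
Invoking the shift invariance turns the second integrand into $\langle (\z, D\chi_{E_t}), \varphi \rangle$, and since $\chi_{E_t}$ is constant outside $[-M, M]$, the collapse $\int_{-M}^M = \int_{-\infty}^\infty$ is automatic.

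For general $u \in BV(\Omega) \cap L^{p'}(\Omega)$, I would truncate $u_M = (u \wedge M) \vee (-M)$ and pass to the limit. Convergence of the left-hand side follows from $u_M \to u$ in $L^{p'}(\Omega)$ together with $\varphi \, \mathrm{div}(\z) \in L^p(\Omega)$ and $\z \cdot \nabla \varphi \in L^\infty(\Omega)$. For the right-hand side, the classical BV coarea formula shows that $t \mapsto \int_\Omega |D\chi_{E_t}|$ belongs to $L^1(\R)$, which provides an integrable majorant for the integrand via the estimate $|\langle (\z, D\chi_{E_t}), \varphi \rangle| \leq \|\varphi\|_\infty \|\z\|_\infty \int_\Omega |D\chi_{E_t}|$; dominated convergence then yields the first identity.

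The second identity follows by applying the weak Gauss-Green formula of Theorem \ref{thm:isotropicgreenformula} to both $u$ and to each $\chi_{E_t}$, which rewrites $\int_\Omega (\z, Du)$ and $\int_\Omega (\z, D\chi_{E_t})$ as boundary traces minus volume integrals of the divergence. Applying the layer cake again to both the boundary integral and the divergence integral, and using the constant-function Gauss-Green identity $\int_{\partial\Omega} [\z, \nu^\Omega] \, d\mathcal{H}^{N-1} = \int_\Omega \mathrm{div}(\z) \, dx$ to annihilate the $-1$ shift on the negative range, delivers the equality. The main obstacle throughout is organising the layer-cake decomposition so that Fubini applies cleanly on both sides of zero; the $(\chi_{E_t} - 1)$ choice, justified by the shift invariance identified at the start, is precisely the device that avoids the divergent individual integrals that would appear at $t \to -\infty$ if one naively used $\chi_{E_t}$ throughout.
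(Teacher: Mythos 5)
Your argument is essentially correct and is, in substance, the classical Anzellotti proof; note that the paper itself does not prove this theorem but only recalls it from \cite{Anzellotti}, so there is no in-paper proof to compare against. The first identity is clean: the shift invariance $\langle(\z,D(w+c)),\varphi\rangle=\langle(\z,Dw),\varphi\rangle$, the split layer-cake representation, Fubini (justified since $\varphi\,\mathrm{div}(\z)+\z\cdot\nabla\varphi\in L^1(\Omega)$), and the truncation limit with the majorant $\|\varphi\|_\infty\|\z\|_\infty\int_\Omega|D\chi_{E_t}|\in L^1(dt)$ all go through. The one step you gloss over is in the second identity: when you apply the layer cake to the boundary term $\int_{\partial\Omega}u\,[\z,\nu^\Omega]\,d\mathcal{H}^{N-1}$ and match it against the Gauss--Green formula for each $\chi_{E_t}$, you are implicitly using that the trace of $\chi_{E_t}$ coincides $\mathcal{H}^{N-1}$-a.e.\ with $\chi_{\{\mathrm{tr}(u)\geq t\}}$ for a.e.\ $t$; this is true (it follows from the approximate-limit characterisation of BV traces together with the fact that $\mathcal{H}^{N-1}(\{\mathrm{tr}(u)=t\})>0$ for at most countably many $t$), but it is a nontrivial fact that should be stated and justified. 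A way to bypass it entirely is to deduce the second identity directly from the first: take $\varphi_n\in C_c^\infty(\Omega)$ with $0\leq\varphi_n\leq 1$ and $\varphi_n\uparrow 1$, pass to the limit on the left using that $(\z,Du)$ is a finite measure, and on the right by dominated convergence in $t$ with the same majorant $\|\z\|_\infty\int_\Omega|D\chi_{E_t}|$; this avoids the Gauss--Green formula and the trace identification altogether.
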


\subsection{Gradient flows in Hilbert spaces}\label{sec:breziskomura}

We now introduce the basic notation concerning multivalued operators in Hilbert spaces and recall some classical results on semigroup solutions to gradient flows. We follow the presentation in \cite{Brezis}. 

Let $H$ be a real Hilbert space. Given a functional $\mathcal{F}: H \to (-\infty, \infty]$, we call the set
$$D(\mathcal{F}) : = \{ u \in H: \ \mathcal{F}(u) < + \infty \}$$
the effective domain of $\mathcal{F}$. The functional $\mathcal{F}$ is said to be proper if $D(\mathcal{F})$ is nonempty.
Furthermore, we say that $\mathcal{F}$ is lower semicontinuous if for every $c \in \R$ the sublevel set $\{ u \in D(\mathcal{F}): \ \mathcal{F}(u) \leq c \}$ is closed in $H$.

Given a proper and convex functional $\mathcal{F} : H \to (-\infty, \infty]$, its {\it subdifferential} is the set
\begin{equation}\label{subdif2}\partial \mathcal{F} := \left\{(u,h) \in H \times H: \ \mathcal{F}(u+v) - \mathcal{F}(u) \geq \langle h, v \rangle_H \ \ \forall \, v \in H \right\}.
\end{equation}
It is a generalisation of the derivative; in the case when $\mathcal{F}$ is Fr\'{e}chet differentiable, its subdifferential is single-valued and equals the Fr\'{e}chet derivative.

Now, consider a multivalued operator $A$ on $H$, i.e. a mapping $A: H \rightarrow 2^H$. It is standard to identify $A$ with its graph in the following way: for every $u \in H$, we set
$$ Au : = \left\{ v \in H: \  (u,v) \in A \right\}.$$
We denote the domain of $A$ by
$$D (A) : = \left\{ u \in H: \ Au \not= \emptyset \right\}$$
and the range of $A$ by
$$R(A) : = \bigcup_{u \in D(A)} Au.$$
A multivalued operator $A$ on $H$ is {\it monotone} if
$$\langle u - \hat{u}, v - \hat{v} \rangle_H \geq 0$$
for all $(u,v), (\hat{u}, \hat{v}) \in A.$
If there is no monotone operator which strictly contains $A$, we say that $A$ is maximal monotone. A classical example of a multivalued operator is the subdifferential; if $\mathcal{F} : H \to (-\infty, \infty]$ is convex and lower semicontinuous, then $\partial \mathcal{F}$ is a maximal monotone multivalued operator on $H$.

For $1 \leq p < \infty$, we denote
$$L^p(a,b; H):= \left\{ u : [a,b] \rightarrow H \ \hbox{measurable such that} \  \int_a^b \Vert u(t) \Vert_H^p \, dt < \infty \right\}$$
and
\begin{align}
W^{1,p}(a,b;H):= &\bigg \{ u \in L^p(a,b; H) \ \hbox{and} \ \exists \, v \in L^p(a,b; H): \\
&\qquad\qquad\qquad\qquad u(t) - u(a) = \int_a^t v(s) \, ds \ \ \forall \, t \in (a,b)  \bigg \}.
\end{align}
By \cite[Corollaire A.2]{Brezis}, if $u \in W^{1,p}(a,b;H)$, it is differentiable for almost all $t \in (a,b)$ and
$$u(t) - u(a) = \int_a^t \frac{du}{dt}(s) \, ds \ \ \forall \, t \in (a,b).$$
We also set $W_{\rm loc}^{1,p}(0,T;H)$ to be the space of all functions $u$ with the following property: for all $0 < a < b < T$, we have that $u \in W^{1,p}(a,b;H)$.

Consider the abstract Cauchy problem
\begin{equation}\label{eq:abstractcauchy}
\left\{ \begin{array}{ll} 0 \in \frac{du}{dt} + \partial \mathcal{F} (u(t)) \, \quad &\mbox{for } t \in (0, T);  \\
[10pt] u(0) = u_0, \quad & \end{array} \right.
\end{equation}
where $u_0 \in H$.

\begin{definition} 
We say that $u \in C([0,T]; H)$ is a strong solution to problem \eqref{eq:abstractcauchy}, if the following conditions hold: $u \in W_{\rm loc}^{1,2}(0,T;H)$; for almost all $t \in (0,T)$ we have $u(t) \in D(\partial \mathcal{F})$; and it satisfies \eqref{eq:abstractcauchy}.
\end{definition}

The following result, called the Brezis-Komura theorem, summarises the existence theory for solutions to the abstract Cauchy problem \eqref{eq:abstractcauchy} obtained using the semigroup approach (see \cite{Brezis}). We refer to \cite{Brezis} for an extensive discussion of additional properties of solutions, such as regularity of time derivative and the $T$-contraction property.

\begin{theorem}\label{thm:breziskomura}
Let $\mathcal{F} : H \to (-\infty, \infty]$ be a proper, convex, and lower semicontinuous functional. Given $u_0 \in \overline{D(\mathcal{F})}$, there exists a unique strong solution to the abstract Cauchy problem \eqref{eq:abstractcauchy}. If additionally $u_0 \in D(\mathcal{F})$, then $u \in W^{1,2}(0, T; H)$. 
\end{theorem}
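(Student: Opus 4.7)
The plan is to follow the classical semigroup approach via Yosida regularization. The starting point is that, since $\mathcal{F}$ is proper, convex and lower semicontinuous, its subdifferential $\partial \mathcal{F}$ is a maximal monotone multivalued operator on $H$; consequently, for every $\lambda > 0$ the resolvent $J_\lambda := (I + \lambda \, \partial \mathcal{F})^{-1}$ is a well-defined $1$-Lipschitz map from $H$ into $D(\partial \mathcal{F})$, and the Yosida regularisation $A_\lambda := \tfrac{1}{\lambda}(I - J_\lambda)$ is single-valued, monotone, $\tfrac{1}{\lambda}$-Lipschitz, and satisfies $A_\lambda u \in \partial \mathcal{F}(J_\lambda u)$ for every $u \in H$. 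With these tools in hand, I would first solve the regularised Cauchy problem
\begin{equation*}
\frac{du_\lambda}{dt} + A_\lambda u_\lambda = 0, \qquad u_\lambda(0) = u_0,
\end{equation*}
obtaining a unique solution $u_\lambda \in C^1([0, T]; H)$ from the Cauchy--Lipschitz theorem in Hilbert space.

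The crux of the argument, and the step I expect to be the main obstacle, is to prove that $\{u_\lambda\}_{\lambda > 0}$ is Cauchy in $C([0, T]; H)$ as $\lambda \to 0^+$. For this I would first assume $u_0 \in D(\partial \mathcal{F})$, compute $\tfrac{1}{2}\tfrac{d}{dt}\|u_\lambda(t) - u_\mu(t)\|_H^2$, and use the monotonicity of $\partial \mathcal{F}$ together with the identity $u_\lambda = J_\lambda u_\lambda + \lambda A_\lambda u_\lambda$ to derive an estimate of the form $\|u_\lambda - u_\mu\|_{C([0,T];H)}^2 \leq C \, (\lambda + \mu)$. The constant $C$ rests on a uniform bound for $\|A_\lambda u_\lambda(t)\|_H$, which follows from the fact that this quantity is nonincreasing in $t$ (differentiate the equation and use monotonicity of $A_\lambda$), so it is controlled by $\|A_\lambda u_0\|_H$, which itself is bounded by the minimal-norm element of $\partial \mathcal{F}(u_0)$.

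Passing to the limit $\lambda \to 0^+$, the uniform convergence yields $u \in C([0,T]; H)$ with $u_\lambda \to u$ and $J_\lambda u_\lambda \to u$ uniformly in $t$. The family $A_\lambda u_\lambda = -\tfrac{du_\lambda}{dt}$ is bounded in $L^2(0, T; H)$, so it has a weak limit $v$; applying the closedness of the graph of the maximal monotone operator $\partial \mathcal{F}$ to the inclusion $A_\lambda u_\lambda \in \partial \mathcal{F}(J_\lambda u_\lambda)$ then forces $-\tfrac{du}{dt}(t) = v(t) \in \partial \mathcal{F}(u(t))$ for a.e. $t \in (0,T)$. For a general datum $u_0 \in \overline{D(\mathcal{F})} = \overline{D(\partial \mathcal{F})}$, I would approximate by a sequence $u_0^n \in D(\partial \mathcal{F})$ and exploit the $T$-contraction estimate $\|u^n(t) - u^m(t)\|_H \leq \|u_0^n - u_0^m\|_H$, obtained by noting that $\tfrac{1}{2}\tfrac{d}{dt}\|u^n - u^m\|_H^2 \leq 0$ by monotonicity; this produces a strong solution belonging a priori only to $W^{1,2}_{\rm loc}(0, T; H)$.

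Uniqueness follows at once from the same contraction inequality applied to two solutions starting from the same datum. Finally, when $u_0 \in D(\mathcal{F})$, the improved global regularity $u \in W^{1,2}(0, T; H)$ comes from the energy identity
\begin{equation*}
\mathcal{F}(u(T)) + \int_0^T \left\| \tfrac{du}{dt}(t) \right\|_H^2 \, dt = \mathcal{F}(u_0),
\end{equation*}
established by testing the equation against $\tfrac{du}{dt}$ and using the convexity-based chain rule $\tfrac{d}{dt}\mathcal{F}(u(t)) = \langle -\tfrac{du}{dt}, \tfrac{du}{dt}\rangle_H$, which places $\tfrac{du}{dt}$ in $L^2(0, T; H)$ on the whole interval.
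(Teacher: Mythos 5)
The paper does not prove this statement; it is quoted verbatim from Brezis's monograph, so your proposal has to be judged on its own. The architecture you describe (Yosida regularisation, Cauchy property of $u_\lambda$, demiclosedness of the graph of $\partial\mathcal{F}$, contraction, energy identity) is the standard and correct route, and the steps you carry out for $u_0 \in D(\partial\mathcal{F})$ and for the final $W^{1,2}(0,T;H)$ regularity when $u_0 \in D(\mathcal{F})$ are sound.

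There is, however, a genuine gap exactly at the passage from $u_0 \in D(\partial\mathcal{F})$ to general $u_0 \in \overline{D(\mathcal{F})}$. The $T$-contraction $\|u^n(t) - u^m(t)\|_H \le \|u_0^n - u_0^m\|_H$ only gives convergence of $u^n$ to some $u$ in $C([0,T];H)$; it says nothing about differentiability of the limit, because your uniform derivative bound $\|\tfrac{du^n}{dt}(t)\|_H \le \|(\partial\mathcal{F})^{\circ}(u_0^n)\|_H$ blows up as $u_0^n \to u_0 \notin D(\partial\mathcal{F})$. As written, your argument would equally ``prove'' that every maximal monotone operator $A$ produces strong solutions for all $u_0 \in \overline{D(A)}$, which is false: in general one only obtains a mild solution. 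The regularising effect is precisely what is special about subdifferentials, and it must be injected here via the smoothing estimate
\begin{equation*}
\Bigl\| \tfrac{du}{dt}(t) \Bigr\|_H \;\le\; \frac{\|u_0 - v\|_H}{t} \;+\; \|(\partial\mathcal{F})^{\circ}(v)\|_H \qquad \text{for every } v \in D(\partial\mathcal{F}),
\end{equation*}
or its integrated form $\int_0^T t \, \|\tfrac{du^n}{dt}(t)\|_H^2 \, dt \le C$ uniformly in $n$, obtained by testing the equation with $t\,\tfrac{du^n}{dt}(t)$ and using the convexity inequality $\mathcal{F}(v) \ge \mathcal{F}(u^n(t)) + \langle -\tfrac{du^n}{dt}(t), v - u^n(t)\rangle_H$. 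This uniform weighted $L^2$ bound is what lets you extract a weak limit of $\tfrac{du^n}{dt}$ on each $(\delta,T)$ and conclude, again by demiclosedness, that the limit is a strong solution in $W^{1,2}_{\rm loc}(0,T;H)$ with $u(t) \in D(\partial\mathcal{F})$ for a.e.\ $t$. A second, minor point: in the final energy identity you should note that $\mathcal{F}(u(T)) > -\infty$ because a proper convex lower semicontinuous functional is minorised by a continuous affine functional and the trajectory is bounded; otherwise the identity does not by itself bound $\int_0^T \|\tfrac{du}{dt}\|_H^2\,dt$.
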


In this paper, we will only apply this result with $H = L^2(\Omega)$; this will be in the part concerning the anisotropic $p$-Laplacian evolution equation in Section \ref{sec:dirichlet}.

We conclude this subsection by the following result given in \cite[Proposition 2.11]{Brezis}, which is useful to characterise the closure of the domain of the subdifferential.

\begin{proposition}\label{prop:domain} 
Let $\mathcal{F}: H \to (-\infty, \infty]$ be a proper, convex, and lower semicontinuous functional. Then,
$$D(\partial \mathcal{F}) \subset D( \mathcal{F}) \subset \overline{ D( \mathcal{F})} \subset \overline{ D( \partial \mathcal{F})}.$$
\end{proposition}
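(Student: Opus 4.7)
The statement consists of three inclusions, and the middle one $D(\mathcal{F}) \subset \overline{D(\mathcal{F})}$ is immediate, so only the first and third require argument.

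For $D(\partial \mathcal{F}) \subset D(\mathcal{F})$, given $u \in D(\partial \mathcal{F})$ pick any $h \in \partial \mathcal{F}(u)$, and since $\mathcal{F}$ is proper choose $w \in H$ with $\mathcal{F}(w) < \infty$. Inserting $v = w - u$ into the subdifferential inequality \eqref{subdif2} yields $\mathcal{F}(u) \leq \mathcal{F}(w) - \langle h, w - u \rangle_H < \infty$, so $u \in D(\mathcal{F})$.

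For the third inclusion $\overline{D(\mathcal{F})} \subset \overline{D(\partial \mathcal{F})}$, the plan is to use the Moreau--Yosida resolvent. For $\lambda > 0$ and $u \in H$, define
$$J_\lambda u := \mathop{\mathrm{argmin}}_{v \in H} \left\{ \mathcal{F}(v) + \frac{1}{2\lambda} \|u - v\|_H^2 \right\}.$$
Properness, convexity, and lower semicontinuity of $\mathcal{F}$, together with the standard fact that any such $\mathcal{F}$ admits an affine minorant $\mathcal{F}(v) \geq \langle a, v \rangle_H + b$, guarantee that the bracketed functional is strictly convex, coercive, and lower semicontinuous on $H$; hence $J_\lambda u$ exists and is unique. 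The Euler--Lagrange condition gives $\lambda^{-1}(u - J_\lambda u) \in \partial \mathcal{F}(J_\lambda u)$, so $J_\lambda u \in D(\partial \mathcal{F})$ for every $u \in H$, and monotonicity of $\partial \mathcal{F}$ yields the nonexpansive bound $\|J_\lambda u - J_\lambda u'\|_H \leq \|u - u'\|_H$.

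It remains to check that $J_\lambda u \to u$ as $\lambda \to 0^+$ for every $u \in \overline{D(\mathcal{F})}$, which produces the required approximating sequence in $D(\partial \mathcal{F})$. For $u \in D(\mathcal{F})$, comparing the minimum to the competitor $v = u$ gives $\tfrac{1}{2\lambda}\|u - J_\lambda u\|_H^2 \leq \mathcal{F}(u) - \mathcal{F}(J_\lambda u)$, and bounding the right-hand side from above via the affine minorant yields $\|u - J_\lambda u\|_H = O(\sqrt{\lambda})$. For general $u \in \overline{D(\mathcal{F})}$, pick $u_n \in D(\mathcal{F})$ with $u_n \to u$ and use nonexpansiveness to obtain
$$\|J_\lambda u - u\|_H \leq 2\|u - u_n\|_H + \|J_\lambda u_n - u_n\|_H,$$
which tends to zero by first sending $\lambda \to 0$ and then $n \to \infty$. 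The main obstacle is entirely packaged into the construction and the two key properties of $J_\lambda$ (the subdifferential characterization and nonexpansiveness), but these are classical facts of convex analysis in Hilbert spaces and do not require anything specific to the anisotropic setting considered in the rest of the paper.
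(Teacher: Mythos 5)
Your proof is correct, and it is the classical resolvent/Moreau--Yosida argument. The paper does not prove this proposition at all but simply cites \cite[Proposition 2.11]{Brezis}, whose proof is essentially the one you give (the nontrivial inclusion $\overline{D(\mathcal{F})} \subset \overline{D(\partial\mathcal{F})}$ via $J_\lambda u \in D(\partial\mathcal{F})$, nonexpansiveness, and $J_\lambda u \to u$), so your approach matches the intended one.
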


\subsection{Convex duality in calculus of variations}\label{subsec:convexduality}

One of the classical tools used to characterise the subdifferential of a convex functional $\mathcal{F}$ is convex duality in a setting typical for calculus of variations (a standard reference is \cite[Chapter III.4]{EkelandTemam}). First, let us recall the notion of the Legendre-Fenchel transform. It is defined as follows: given a Banach space $V$ and $F: V \rightarrow \mathbb{R} \cup \{ + \infty \}$, we define $F^*: V^* \rightarrow \mathbb{R} \cup \{ + \infty \}$ by the formula
\begin{equation*}
F^*(v^*) = \sup_{v \in V} \bigg\{ \langle v, v^* \rangle - F(v) \bigg\}.
\end{equation*}
Then, let us recall shortly how the dual problem is typically defined in the setting of calculus of variations. 

Let $U, V$ be two Banach spaces and let $A: U \rightarrow V$ be a continuous linear operator. Denote by $A^*: V^* \rightarrow U^*$ its dual. Then, if the primal problem is of the form
\begin{equation}\tag{P}\label{eq:primal}
\inf_{u \in U} \bigg\{ E(Au) + G(u) \bigg\},
\end{equation}
where $E: V \rightarrow (-\infty,+\infty]$ and $G: U \rightarrow (-\infty,+\infty]$ are proper, convex and lower semicontinuous, then the dual problem is defined as the maximisation problem
\begin{equation}\tag{P*}\label{eq:dual}
\sup_{v^* \in V^*} \bigg\{ - E^*(-v^*) - G^*(A^* v^*) \bigg\}
\end{equation}
It turns out that in the above setting the minimum value in the primal problem equals the maximum value in the dual problem; this is the content of the Fenchel-Rockafellar duality theorem, which we recall in the form given in \cite[Remark III.4.2]{EkelandTemam}. 

\begin{theorem}[Fenchel-Rockafellar duality theorem]\label{thm:fenchelrockafellar}
Suppose that $E$ and $G$ are proper, convex and lower semicontinuous. If there exists $u_0 \in U$ such that $E(A u_0) < \infty$, $G(u_0) < \infty$ and $E$ is continuous at $A u_0$, then
$$\inf \eqref{eq:primal} = \sup \eqref{eq:dual}$$
and the dual problem \eqref{eq:dual} has at least one solution. Moreover, if $u$ is a solution of \eqref{eq:primal} and $v^*$ is a solution of \eqref{eq:dual}, the following extremality conditions hold:
\begin{equation}
E(Au) + E^*(-v^*) = \langle -v^*, Au \rangle;
\end{equation}
\begin{equation}
G(u) + G^*(A^* v^*) = \langle u, A^* v^* \rangle.
\end{equation}
\end{theorem}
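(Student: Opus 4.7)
The plan is to use the classical perturbation framework from \cite{EkelandTemam}. Define the perturbation function $\Phi: U \times V \to (-\infty, +\infty]$ by
\begin{equation*}
\Phi(u,p) = E(Au + p) + G(u),
\end{equation*}
which is convex and proper since $E, G$ are proper, convex and lower semicontinuous and $A$ is continuous linear. The primal problem reads $\inf \eqref{eq:primal} = \inf_{u \in U} \Phi(u,0)$. A direct computation of the Legendre--Fenchel transform on $U \times V$, via the substitution $w = Au + p$, gives
\begin{equation*}
\Phi^*(u^*, v^*) = E^*(v^*) + G^*(u^* - A^* v^*),
\end{equation*}
from which the dual problem can be rewritten as $\sup \eqref{eq:dual} = -\inf_{v^* \in V^*} \Phi^*(0, -v^*)$. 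The weak duality inequality $\inf \eqref{eq:primal} \geq \sup \eqref{eq:dual}$ is then immediate from the Fenchel--Young inequality applied termwise.

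For the reverse inequality, I would introduce the marginal value function $h: V \to [-\infty, +\infty]$ given by
\begin{equation*}
h(p) = \inf_{u \in U} \Phi(u,p),
\end{equation*}
which is convex. A direct computation yields $h^*(v^*) = \Phi^*(0, v^*)$, and consequently $h^{**}(0) = \sup \eqref{eq:dual}$. Hence strong duality together with existence of a dual maximizer reduces to showing that $h(0) = h^{**}(0)$ and $\partial h(0) \neq \emptyset$. The main step, and the principal obstacle, is to exploit the qualification hypothesis: since $E$ is continuous at $A u_0$, there exists an open neighborhood $W$ of $0 \in V$ on which $p \mapsto E(A u_0 + p)$ is bounded above by some constant $M$. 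Taking $u = u_0$ in the infimum defining $h$ then gives $h(p) \leq M + G(u_0) < \infty$ for all $p \in W$. A convex function bounded above on a neighborhood of an interior point of its effective domain is continuous (indeed locally Lipschitz) there, hence finite and subdifferentiable. Thus $h$ is continuous at $0$ with $\partial h(0) \neq \emptyset$, which yields simultaneously $h(0) = h^{**}(0)$ (any subgradient provides an affine minorant that touches the graph at $0$) and a dual maximizer $v^* \in V^*$ characterized by $-v^* \in \partial h(0)$.

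Finally, the extremality conditions follow by unwinding equalities. If $u$ solves \eqref{eq:primal} and $v^*$ solves \eqref{eq:dual}, the no-gap identity reads
\begin{equation*}
E(Au) + E^*(-v^*) + G(u) + G^*(A^* v^*) = 0.
\end{equation*}
Using the pairing identity $\langle -v^*, Au \rangle + \langle u, A^* v^* \rangle = 0$, this rearranges as
\begin{equation*}
\bigl[ E(Au) + E^*(-v^*) - \langle -v^*, Au \rangle \bigr] + \bigl[ G(u) + G^*(A^* v^*) - \langle u, A^* v^* \rangle \bigr] = 0,
\end{equation*}
and each bracket is nonnegative by Fenchel--Young. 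Both must therefore vanish, giving the two stated extremality relations. I expect the only nontrivial point to be the interior-continuity step for $h$: without the hypothesis that $E$ is continuous at $A u_0$ the function $h$ can fail to be lower semicontinuous at $0$ and a duality gap may appear, so the qualification assumption is used precisely to force subdifferentiability of $h$ at the origin.
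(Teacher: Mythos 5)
Your argument is correct: it is the standard perturbation-function proof of Fenchel--Rockafellar duality (conjugating $\Phi(u,p)=E(Au+p)+G(u)$, identifying the dual with $-h^{**}(0)$ for the marginal function $h$, and using the continuity hypothesis to get $\partial h(0)\neq\emptyset$), which is exactly the route taken in the cited source \cite[Chapter III.4]{EkelandTemam}; the paper itself does not prove this theorem but only recalls it from that reference. The one point worth flagging is the implicit assumption that $h$ does not take the value $-\infty$ near $0$ when you invoke local Lipschitz continuity; in the degenerate case $\inf\eqref{eq:primal}=-\infty$ weak duality already forces $\sup\eqref{eq:dual}=-\infty$ and the statement is vacuous, so this does not affect correctness.
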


In the case when the primal problem does not have any solutions, we may use the $\varepsilon-$subdifferentiability property of minimising sequences, see \cite[Proposition V.1.2]{EkelandTemam}: for any minimising sequence $u_n$ for \eqref{eq:primal} and a maximiser $v^*$ of \eqref{eq:dual}, we have
\begin{equation}\label{eq:epsilonsubdiff1}
0 \leq E(Au_n) + E^*(-v^*) - \langle -v^*, Au_n \rangle \leq \varepsilon_n
\end{equation}
and
\begin{equation}\label{eq:epsilonsubdiff2}
0 \leq G(u_n) + G^*(A^* v^*) - \langle u_n, A^* v^* \rangle \leq \varepsilon_n
\end{equation}
with $\varepsilon_n \rightarrow 0$.

\section{Strongly anisotropic Anzellotti pairings}\label{sec:anisotropicspaces}

In this Section, we construct an anisotropic counterpart of the Anzellotti pairings described in Section \ref{sec:anzellotti}. We begin by introducing the notation concerning the coordinates and the domain which will be valid throughout the rest of the paper.

\subsection{Notation}

Let $\Omega_1 \subset \R^{n_1}$ and $\Omega_2 \subset \R^{n_2+...+n_k}$ be bounded open sets. We denote by $x_1$ the points in $\Omega_1$ and by $(x_2,.,,,x_k)$ the points in $\Omega_2$; note that the points $x_i$ have $n_i$ coordinates. We set
$$\Omega = \Omega_1 \times \Omega_2 \subset \R^{N},$$
where $N = n_1 + ... + n_k$, and assume that $\Omega$ has Lipschitz boundary. For a sufficiently regular function $u: \Omega \rightarrow \R$, we denote by $\nabla_{x_i}$ the vector consisting of derivatives in directions $x_i$. Therefore, the full gradient of $u$ is 
$$\nabla u = (\nabla_{x_1} u, ..., \nabla_{x_k} u).$$
We use a similar notation for the divergence; for a sufficiently regular vector field $\varphi: \Omega \rightarrow \R^N$, we denote by $\mathrm{div}_{x_i}$ the sum of derivatives in directions $x_i$. We also use this notation for vector fields with values in $\R^{n_i}$. Therefore, the full divergence of $\varphi$ is
$$ \mathrm{div}(\varphi) = \sum_{i = 1}^k \mathrm{div}_{x_i}(\varphi). $$
One of the main technical difficulties we face in this paper is that the estimates for the full divergence $\mathrm{div}(\varphi)$ of a vector field $\varphi$ do not imply estimates for the restricted divergences $\mathrm{div}_{x_i}(\varphi)$; as a consequence, we usually cannot split our reasoning into coordinates.

Finally, to each direction $x_i$ we associate an exponent $p_i$; throughout the paper, we assume that $p_1 = 1$, and without loss of generality we assume that $1 < p_2 \leq ... \leq p_k$. The case when we have more than one subspace corresponding to $p = 1$ follows with a minor modification of the proofs.

\subsection{Suitable anisotropic spaces}

Our first goal is to identify the proper anisotropic function spaces for the study of functionals of the type
\begin{equation}
\mathcal{F}(u) = \sum_{i = 1}^k \frac{1}{p_i} \int_\Omega |\nabla_{x_i} u|^{p_i} \, dx.
\end{equation}
We will work with closed subspaces of $BV(\Omega)$, so we can assume a priori that the functions we consider have partial derivatives in the sense of distributions which are finite Radon measures. We define the Sobolev space $W^{1,(p_i)}(\Omega)$ in the following way: 
\begin{equation}
W^{1,(p_i)}(\Omega) = \bigg\{ u \in W^{1,1}(\Omega): \quad \nabla_{x_i} u \in L^{p_i}(\Omega; \R^{n_i}) \mbox{ for all } i = 1,...,k \bigg\},
\end{equation}
and it is equipped with the norm
\begin{equation}
\| u \|_{W^{1,(p_i)}(\Omega)} = \| u \|_{L^1(\Omega)} + \sum_{i=1}^k \| \nabla_{x_i} u \|_{L^{p_i}(\Omega; \R^{n_i})}.
\end{equation}
Similarly to the classical Sobolev spaces, we can approximate in the norm every function in $W^{1,(p_i)}(\Omega)$ using smooth functions; we postpone the proof to the next Section.

We will additionally consider the Sobolev spaces with zero trace in the following two variants. The first one is a classical zero-trace space
\begin{equation}
W_0^{1,(p_i)}(\Omega) = \bigg\{ u \in W^{1,1}_0(\Omega): \quad \nabla_{x_i} u \in L^{p_i}(\Omega; \R^{n_i}) \mbox{ for all } i = 1,...,k \bigg\}.
\end{equation}
However, in order to study the functionals of the form $\mathcal{F}$, this requirement is a bit too strong; therefore, we introduce the following {\it weak zero-trace} space, in which the trace condition is imposed on all coordinates except for $x_1$:
\begin{align}
W_{0,w}^{1,(p_i)}(\Omega) = \bigg\{ u \in W^{1,(p_i)}(\Omega): \,\, \mbox{ the function } &(x_2,...,x_k) \mapsto u(x_1,...,x_k) \\
&\mbox{belongs to } W_0^{1,(p_i)_{i=2}^k}(\Omega_2) \mbox{ for a.e. } x_1 \in \Omega_1 \bigg\}.
\end{align}
In a similar fashion, we introduce an anisotropic space of functions of bounded variation $BV^{(p_i)}(\Omega)$, in which the distributional derivative in the first coordinate is a finite Radon measure and the other distributional derivatives are functions; to be exact, we set
\begin{equation}
BV^{(p_i)}(\Omega) = \bigg\{ u \in BV(\Omega): \,\, D_{x_1} u \in \mathcal{M}(\Omega; \R^{n_1}), \,\, \nabla_{x_i} u \in L^{p_i}(\Omega; \R^{n_i}) \mbox{ for all } i = 2,...,k \bigg\},
\end{equation}
and the norm is given by
\begin{equation}
\| u \|_{BV^{(p_i)}(\Omega)} = \| u \|_{L^1(\Omega)} + \int_\Omega |D_{x_1} u| + \sum_{i=2}^k \| \nabla_{x_i} u \|_{L^{p_i}(\Omega; \R^{n_i})}.
\end{equation}
Similarly to the Sobolev case, we define the zero-trace space as
\begin{equation}
BV_0^{(p_i)}(\Omega) = \bigg\{ u \in BV_0(\Omega): \,\, D_{x_1} u \in \mathcal{M}(\Omega; \R^{n_1}), \,\, \nabla_{x_i} u \in L^{p_i}(\Omega; \R^{n_i}) \mbox{ for all } i = 2,...,k \bigg\},
\end{equation}
and the weak zero-trace space as
\begin{align}
BV_{0,w}^{(p_i)}(\Omega) = \bigg\{ u \in BV^{(p_i)}(\Omega): \,\, \mbox{ the function } &(x_2,...,x_k) \mapsto u(x_1,...,x_k) \\
&\mbox{belongs to } W_0^{1,(p_i)_{i=2}^k}(\Omega_2) \mbox{ for a.e. } x_1 \in \Omega_1 \bigg\}.
\end{align}
For the choice $(p_1,p_2) = (1,q)$, the space $BV_{0,w}^{(p_i)}(\Omega)$ corresponds to the space $BV^{(q)}$ used in \cite{MRST}.

Observe that all of the above spaces are subspaces of $BV(\Omega)$; they are also subspaces of an even larger space, i.e.
\begin{equation}
BV_{x_1}(\Omega) = \bigg\{ u \in L^1(\Omega): \,\, D_{x_1} u \in \mathcal{M}(\Omega; \R^{n_1}) \bigg\}.
\end{equation}
Similarly to the standard BV space, we can show (e.g. following the argument in \cite{AFP}) that
\begin{equation}\label{dfn:tvonx1}
\int_\Omega |D_{x_1} u| = \sup \bigg\{ \int_{\Omega} u \, \mathrm{div}_{x_1} (\varphi) \, dx: \,\, \varphi \in C^{\infty}_{0}(\Omega; \R^{n_1}), \,\, |\varphi(x)| \leq 1
\, \, \hbox{for $x \in \Omega$} \bigg\},
\end{equation}
and as a consequence the total variation $|D_{x_1} u|$ is lower semicontinuous with respect to convergence in $L^1(\Omega)$. Similarly, by considering the zero extension of $u$ to a larger domain $\Omega' = \Omega'_1 \times \Omega_2$, we can see that the functional
\begin{equation*}
u \mapsto \int_\Omega |D_{x_1} u| + \int_{\partial\Omega_1 \times \Omega_2} |u| \, d\mathcal{H}^{N-1}
\end{equation*}
is lower semicontinuous with respect to convergence in $L^1(\Omega)$.

Since all functions $u \in BV_{0,w}^{(p_i)}(\Omega)$ have zero trace in the coordinates $x_2, ..., x_k$ and the standard proof of the Poincar\'e inequality involves only an estimate in one direction, we get that $u \in L^{p_k}(\Omega)$ and the $L^{p_k}$-norm can be estimated using only the gradient of $u$ restricted to the direction $x_k$. To be exact, we have the following result.

\begin{proposition}[Poincar\'e inequality]\label{prop:poincare}
For every $u \in BV_{0,w}^{(p_i)}(\Omega)$ we have
\begin{equation}
\int_\Omega |u|^{p_k} \, dx \leq C(\Omega) \int_\Omega |\nabla_{x_k} u|^{p_k} \, dx,
\end{equation}
so in particular $BV_{0,w}^{(p_i)}(\Omega) \hookrightarrow L^{p_k}(\Omega)$.
\end{proposition}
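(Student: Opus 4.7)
The plan is to reduce the claim to a one-dimensional Poincar\'e--Friedrichs inequality applied slicewise in the $x_1$-variable. The key structural input is the very definition of $BV_{0,w}^{(p_i)}(\Omega)$: for almost every $x_1 \in \Omega_1$ the slice $v_{x_1}(x_2,\dots,x_k) := u(x_1,x_2,\dots,x_k)$ lies in $W_0^{1,(p_i)_{i=2}^k}(\Omega_2)$, hence in particular in $W_0^{1,p_k}(\Omega_2)$, so that it vanishes on $\partial \Omega_2$ in the trace sense. Notice that no information about the BV-in-$x_1$ structure of $u$ is needed at all.

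Concretely, since $\Omega_2 \subset \R^{n_2+\dots+n_k}$ is bounded, I would fix once and for all a single coordinate direction $e$ among the coordinates of $x_k$ and a radius $R>0$ such that $\Omega_2$ is contained in the slab $\{|(x_2,\dots,x_k)\cdot e| \leq R\}$. Because $p_k>1$, approximating $v_{x_1}$ in $W_0^{1,p_k}(\Omega_2)$ by $C_c^\infty(\Omega_2)$ functions, extending them by zero to the slab, expressing each test function as the integral of its $e$-derivative from the boundary of the slab, and applying H\"older's inequality yields
\begin{equation*}
\int_{\Omega_2} |v_{x_1}|^{p_k}\, dx_2\cdots dx_k \;\leq\; (2R)^{p_k}\int_{\Omega_2} |\partial_e v_{x_1}|^{p_k}\, dx_2\cdots dx_k \;\leq\; (2R)^{p_k}\int_{\Omega_2} |\nabla_{x_k} v_{x_1}|^{p_k}\, dx_2\cdots dx_k,
\end{equation*}
where the second estimate simply uses that $\partial_e v_{x_1}$ is one component of the vector $\nabla_{x_k} v_{x_1}$.

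It then remains to integrate this pointwise-in-$x_1$ estimate over $\Omega_1$. Since $|u|^{p_k}$ and $|\nabla_{x_k} u|^{p_k}$ are nonnegative measurable functions on $\Omega$, Tonelli's theorem guarantees that both slicewise integrals are measurable functions of $x_1$, and integration over $\Omega_1$ produces the desired inequality with $C(\Omega) = (2R)^{p_k}$. The continuous embedding $BV_{0,w}^{(p_i)}(\Omega)\hookrightarrow L^{p_k}(\Omega)$ is then immediate, since $\|\nabla_{x_k} u\|_{L^{p_k}}$ is part of the $BV^{(p_i)}$-norm.

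I do not anticipate any genuine obstacle: the argument is the classical Poincar\'e inequality in disguise. The only mild point of care is that the slab direction $e$ must be chosen independently of $x_1$ so that the slicewise bound can be integrated uniformly, but this is automatic because $\Omega_2$ is a fixed bounded set.
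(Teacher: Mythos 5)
Your argument is correct and is exactly the route the paper only sketches: slice in $x_1$, use the zero trace of the slices on $\partial\Omega_2$ built into the definition of $BV_{0,w}^{(p_i)}(\Omega)$, apply the one-directional Poincar\'e--Friedrichs estimate along a fixed direction $e$ in the $x_k$-block, and integrate over $\Omega_1$ via Tonelli. One small imprecision: the slice $v_{x_1}$ lies in $W_0^{1,(p_i)_{i=2}^k}(\Omega_2)$, hence in $W_0^{1,1}(\Omega_2)$ with $\nabla_{x_k} v_{x_1}\in L^{p_k}$, but not in general in $W_0^{1,p_k}(\Omega_2)$, since its derivatives in the directions $x_2,\dots,x_{k-1}$ are only in $L^{p_i}$ with $p_i\le p_k$; this is harmless, because your fundamental-theorem-of-calculus step uses only $\partial_e v_{x_1}\in L^{p_k}$, and the needed approximation can be carried out by extending by zero and mollifying rather than by density in $W_0^{1,p_k}(\Omega_2)$.
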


A sketch of proof in an anisotropic case involving only two exponents was given in \cite[Theorem 2.3]{MRST}. As a consequence, the norms
\begin{equation}
\| u \|_{W_{0,w}^{1,(p_i)}(\Omega)} = \sum_{i=1}^k \| \nabla_{x_i} u \|_{L^{p_i}(\Omega; \R^{n_i})}
\end{equation}
and
\begin{equation}
\| u \|_{BV_{0,w}^{(p_i)}(\Omega)} = \int_\Omega |D_{x_1} u| + \sum_{i=2}^k \| \nabla_{x_i} u \|_{L^{p_i}(\Omega; \R^{n_i})}
\end{equation}
are equivalent to the Sobolev and BV norms respectively on the weak zero-trace space. 

Finally, let us briefly comment on the Sobolev-type embeddings for anisotropic Sobolev functions. Assume that
\begin{equation}\label{eq:conditionforpi}
\sum_{i = 1}^k \frac{n_i}{p_i} > 1
\end{equation}
and denote by
\begin{equation}
\overline{p} = \frac{N}{(\sum_{i = 1}^k \frac{n_i}{p_i}) - 1}
\end{equation}
the harmonic mean of the exponents $p_i$, weighted by the respective dimensions $n_i$ of subspaces corresponding to the exponents $p_i$. A classical estimate given in \cite{Tro} (see also \cite{Ada}) shows that there exists $C > 0$ such that for every $u \in C_c^\infty(\R^N)$ we have
\begin{equation}
\| u \|_{\overline{p}} \leq C \sum_{i = 1}^k \| \nabla_{x_i} u \|_{p_i}.
\end{equation}
Corresponding results for bounded domains without the requirement that the function has zero trace are only valid under some additional assumptions on the structure of the domain; then, they translate to the validity of a Sobolev embedding. A classical reference is \cite{KK}, where it is shown that
\begin{equation}
W^{1,(p_i)}(\Omega) \hookrightarrow L^{\overline{p}}(\Omega)
\end{equation}
under an assumption that covers rectangular domains. A similar result if the opposite inequality in \eqref{eq:conditionforpi} holds gives a Morrey-type embedding into H\"older continuous functions. In this paper, we will not directly use this type of results, so for more recent advances and a discussion on the shape of the domain we just refer the interested reader to \cite{FGK,Rak1,Rak2}.

\subsection{Strongly anisotropic Anzellotti pairings}

We now present an anisotropic version of the Anzellotti construction, which is more suitable for handling the anisotropic $p$-Laplacian operator in the case when we have a linear growth term on some of the coordinates. A first construction of this type appeared in \cite{MRST} in a restricted setting adapted to equation \eqref{eq:introductionellipticproblem}. The basic idea behind parts of the construction presented here is similar to the one in \cite{MRST}; however, our definition is more general and we provide additional details in the proofs. In particular, we fill a small gap in the proof of the Gauss-Green formula in \cite{MRST}; the result itself remains true under slightly stronger assumptions on the domain. We further comment on this issue just before Proposition \ref{prop:gagliardo}.

We start our considerations by proving an anisotropic counterpart of the Meyers-Serrin theorem, i.e. an approximation by smooth functions in an analogue of strict topology for BV functions.

\begin{proposition}\label{prop:meyersserrin}
Take any $q \in [1,\infty]$ and let $u \in BV^{(p_i)}(\Omega) \cap L^q(\Omega)$. Then, there exists a sequence $u_n \in W^{1,1}(\Omega) \cap C^\infty(\Omega)$ such that:
\begin{enumerate}
\item $u_n \rightarrow u$ in $L^q(\Omega)$ for $q < \infty$ and $u_n \rightharpoonup u$ weakly* in $L^\infty(\Omega)$ for $q = \infty$;

\item $\int_\Omega |\nabla_{x_1} u_n| \, dx \rightarrow \int_{\Omega} |D_{x_1} u|$;

\item $\nabla_{x_i} u_n \rightarrow \nabla_{x_i} u$ in $L^{p_i}(\Omega; \mathbb{R}^{n_i})$ for all $i = 2,...,k$;

\item $u_n|_{\partial\Omega_1 \times \Omega_2} = u|_{\partial\Omega_1 \times \Omega_2}$.
\end{enumerate}
If $u \in W^{1,(p_i)}(\Omega)$, we also have that $\nabla_{x_1} u_n \rightarrow \nabla_{x_1} u$ in $L^{1}(\Omega; \mathbb{R}^{n_1})$.
\end{proposition}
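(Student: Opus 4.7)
The approach is to adapt the Anzellotti--Giaquinta smoothing of a BV function so that both the strict-convergence structure in the $x_1$-direction and the Sobolev structure in the $x_2,\dots,x_k$-directions are respected simultaneously. First I would fix a classical AG-type annular cover of $\Omega_1$ by $B_0=\{x_1\in\Omega_1:d(x_1,\partial\Omega_1)>1/m_0\}$ and $B_j=\{x_1:1/(m_0+j+1)<d(x_1,\partial\Omega_1)<1/(m_0+j-1)\}$ for $j\ge 1$, together with a subordinate partition of unity $\{\eta_j(x_1)\}_{j\ge 0}$ depending \emph{only} on $x_1$, extended trivially in the remaining coordinates so that each $\eta_j$ acts on $\Omega$ and still satisfies $\nabla_{x_i}\eta_j\equiv 0$ for every $i\ge 2$. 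Since no trace is being preserved on $\Omega_1\times\partial\Omega_2$, I would also extend $u$ slice-wise in $x_1$ by a bounded Sobolev extension in the $(x_2,\dots,x_k)$-variables to a slightly larger product $\Omega_1\times\Omega_2'$, so that full $\R^N$-mollifications centered inside $\Omega$ are well defined.

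Define the approximation
$$u_\delta(x):=\sum_{j\ge 0}(\eta_j u)*\rho_{\varepsilon_j}(x),$$
with $\rho_{\varepsilon_j}$ a standard $\R^N$-mollifier and the scales $\varepsilon_j>0$ chosen by a diagonal argument so small that for every $j$: the support of $(\eta_j u)*\rho_{\varepsilon_j}$ lies in $(B_{j-1}\cup B_j\cup B_{j+1})\times\Omega_2'$; the estimate $\|(\eta_j u)*\rho_{\varepsilon_j}-\eta_j u\|_{L^q}\le\delta/2^{j+1}$ holds (with the analogous $L^\infty$-uniform bound when $q=\infty$); $\|(u\nabla_{x_1}\eta_j)*\rho_{\varepsilon_j}-u\nabla_{x_1}\eta_j\|_{L^1}\le\delta/2^{j+1}$; and $\|(\eta_j\nabla_{x_i}u)*\rho_{\varepsilon_j}-\eta_j\nabla_{x_i}u\|_{L^{p_i}}\le\delta/2^{j+1}$ for each $i\ge 2$. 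Local finiteness of the covering gives $u_\delta\in C^\infty(\Omega)$, and the $L^1$-bounds on the gradients derived below show $u_\delta\in W^{1,1}(\Omega)$.

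The four convergence properties then unfold term by term. Summing the $L^q$-bound gives (1) (or weak-$*$ convergence in $L^\infty$ by dominated convergence after noting $|u_\delta|\le C\|u\|_\infty$). For $i\ge 2$, since $\nabla_{x_i}\eta_j\equiv 0$ we have $\nabla_{x_i}u_\delta=\sum_j(\eta_j\nabla_{x_i}u)*\rho_{\varepsilon_j}$, and the $L^{p_i}$-estimate combined with $\sum_j\eta_j\nabla_{x_i}u=\nabla_{x_i}u$ yields (3). In the $x_1$-direction, the product rule and the telescoping identity $\sum_j u\nabla_{x_1}\eta_j=0$ give
$$\nabla_{x_1}u_\delta=\sum_j(\eta_j D_{x_1}u)*\rho_{\varepsilon_j}+\sum_j\bigl((u\nabla_{x_1}\eta_j)*\rho_{\varepsilon_j}-u\nabla_{x_1}\eta_j\bigr),$$
where the second sum is $O(\delta)$ in $L^1$ by the third estimate, while the first has $L^1$-mass at most $\sum_j\int\eta_j\,d|D_{x_1}u|=|D_{x_1}u|(\Omega)$; together with the $L^1$-lower semicontinuity recorded in \eqref{dfn:tvonx1}, this gives the strict convergence (2). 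The supplementary $W^{1,(p_i)}$ claim follows by replacing the measure $\eta_j D_{x_1}u$ with the $L^1$-function $\eta_j\nabla_{x_1}u$, whose mollification converges in $L^1$. Finally, trace preservation (4) on $\partial\Omega_1\times\Omega_2$ is the standard Anzellotti trace argument as in \cite{AFP}: only high-index $\eta_j$ contribute near that boundary, and each $\varepsilon_j$ is much smaller than the distance from the support of $\eta_j$ to $\partial\Omega_1$, so the $L^1(\partial\Omega_1\times\Omega_2)$-trace of $u_\delta$ coincides with that of $u$.

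The main obstacle is to arrange all the above estimates simultaneously with a single choice of $(\varepsilon_j)_j$; this is handled by the diagonal choice above, since each condition is a countable family of standard mollifier estimates. A secondary subtlety is making the mollifications well defined near $\Omega_1\times\partial\Omega_2$, which is resolved cleanly by the preliminary slice-wise Sobolev extension in the $(x_2,\dots,x_k)$-variables, permissible precisely because no trace is being preserved there.
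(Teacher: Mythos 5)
Your proposal follows the same Anzellotti--Giaquinta/Meyers--Serrin strategy as the paper (locally finite partition of unity, mollification at scales $\varepsilon_j$ fixed by a diagonal choice, summation of the errors), but with one structural difference: the paper covers $\Omega$ by sets $\Omega_j\Subset\Omega$ with a partition of unity in all variables, so every mollification stays inside $\Omega$, whereas you use cutoffs $\eta_j(x_1)$ depending on $x_1$ only and compensate by first extending $u$ across $\Omega_1\times\partial\Omega_2$ to a larger product $\Omega_1\times\Omega_2'$. Your choice does simplify item (3), since $\nabla_{x_i}\eta_j\equiv 0$ for $i\ge 2$ (the paper instead estimates $\rho_{\varepsilon_j}*\nabla_{x_i}(\varphi_j u)-\nabla_{x_i}(\varphi_j u)$ directly), but the extension opens a genuine gap in your proof of item (2). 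The object you actually mollify is $\eta_j D_{x_1}\tilde{u}$, where $\tilde{u}$ denotes the extension, and the bound you assert for the first sum,
\begin{equation}
\sum_j\big\|(\eta_j D_{x_1}\tilde{u})*\rho_{\varepsilon_j}\big\|_{L^1(\Omega)}\ \le\ \sum_j\int\eta_j\,d|D_{x_1}u|\ =\ |D_{x_1}u|(\Omega),
\end{equation}
is not justified as written: what comes for free is only $\|(\eta_j D_{x_1}\tilde{u})*\rho_{\varepsilon_j}\|_{L^1(\Omega)}\le\int_{\Omega_1\times\Omega_2'}\eta_j\,d|D_{x_1}\tilde{u}|$, and the collar $\Omega_1\times(\Omega_2'\setminus\overline{\Omega_2})$ carries $|D_{x_1}\tilde{u}|$-mass (for a reflection extension, essentially a copy of the mass of $|D_{x_1}u|$ near $\Omega_1\times\partial\Omega_2$) that does not vanish as $\delta\to 0$ for a fixed $\Omega_2'$. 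Your upper bound therefore only yields $\limsup_\delta\int_\Omega|\nabla_{x_1}u_\delta|\le\int_{\Omega_1\times\Omega_2'}|D_{x_1}\tilde{u}|$, which in general exceeds $\int_\Omega|D_{x_1}u|$, and the matching with the lower-semicontinuity lower bound fails.

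The gap is repairable but requires two additional ingredients you do not supply. First, one should use the sharper estimate $\|(\eta_j\mu)*\rho_{\varepsilon_j}\|_{L^1(\Omega)}\le(\eta_j|\mu|)\big(\{x:\,\mathrm{dist}(x,\Omega)<\varepsilon_j\}\big)$ and add to your list of mollifier conditions the requirement that $(\eta_j|D_{x_1}\tilde{u}|)(\{\mathrm{dist}(\cdot,\Omega)<\varepsilon_j\}\setminus\Omega)<2^{-j}\delta$. Second, for this to be achievable one must check that $|D_{x_1}\tilde{u}|$ does not charge $\mathrm{supp}\,\eta_j\cap\partial\Omega\subset\overline{B_j}\times\partial\Omega_2$; this holds because that surface has normal with vanishing $x_1$-component, so the jump part of $D_{x_1}\tilde{u}$ vanishes there (and the Cantor part cannot charge an $\mathcal{H}^{N-1}$-finite set), but it presupposes that your slice-wise extension actually produces a $BV$ function of the larger product, which also deserves a line of proof. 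The paper sidesteps all of this by keeping every support compactly inside $\Omega$, at the price of having to handle the terms $u\nabla_{x_i}\varphi_j$ for $i\ge 2$. A further minor slip: for $q=\infty$ you cannot demand $\|(\eta_j u)*\rho_{\varepsilon_j}-\eta_j u\|_{L^\infty}\le 2^{-j-1}\delta$ for a general bounded $u$; your fallback route (uniform bound $|u_\delta|\le C\|u\|_\infty$ together with $L^1$-convergence implies weak* convergence in $L^\infty$) is the correct one and is all the statement requires.
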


\begin{proof}
We proceed similarly to the proof of the corresponding result for BV functions given in \cite[Theorem 3.9]{AFP} or \cite[Theorem 1.17]{Giu}. Take a sequence of open sets $\Omega_j$ with the following property: $\Omega_j \Subset \Omega$ and every point $x \in \Omega$ lies in at most four sets $\Omega_j$. Take a partition of unity $\varphi_j$ relative to this covering, i.e. $\varphi_j \in C_c^\infty(\Omega)$, $\varphi_j \geq 0$, $\mathrm{supp}(\varphi_j) \subset \Omega_j$ and $\sum_{j=1}^\infty \varphi_j \equiv 1$ in $\Omega$.

Let $\rho_\varepsilon$ be a family of standard mollifiers and take $\delta \in (0,1)$. Then, for every $j \in \mathbb{N}$ there exists $\varepsilon_j > 0$ such that $\mathrm{supp}(\rho_{\varepsilon_j} * (\varphi_j u)) \subset \Omega_j$ and the following conditions hold:
\begin{equation}
\int_{\Omega} |\rho_{\varepsilon_j} * (\varphi_j u) - \varphi_j u|^q \, dx < (2^{-j} \delta)^q;
\end{equation}
\begin{equation}
\int_{\Omega} |\rho_{\varepsilon_j} * (u \nabla_{x_1} \varphi_j ) - u \nabla_{x_1} \varphi_j| \, dx < 2^{-j} \delta;
\end{equation}
and for $i = 2, ..., k$ we have
\begin{equation}
\int_{\Omega} |\rho_{\varepsilon_j} * \nabla_{x_i}(\varphi_j u) - \nabla_{x_i}(\varphi_j u)|^{p_i} \, dx < (2^{-j} \delta)^{p_i}.
\end{equation}
Then, we set $u_\delta = \sum_{j = 1}^\infty \rho_{\varepsilon_j} * (u \varphi_j)$. The function $u_\delta$ is smooth because each of the terms is smooth and the sum is locally finite. Our choice of the sequence $\varepsilon_j$ yields that
\begin{equation}
\bigg( \int_\Omega |u_\delta - u|^q \, dx \bigg)^{1/q} \leq \sum_{j = 1}^\infty \bigg( \int_{\Omega} |\rho_{\varepsilon_j} * (\varphi_j u) - \varphi_j u|^q \, dx \bigg)^{1/q} < \delta;
\end{equation}
similarly,
\begin{equation}
\int_\Omega |D_{x_1} u_\delta| = \int_\Omega |\nabla_{x_1} u_\delta| \, dx < \sum_{j=1}^\infty \int_{\Omega} \varphi_j |D_{x_1} u| + \delta = \int_\Omega |D_{x_1} u| + \delta
\end{equation}
and for $i = 2,...,k$ we have
\begin{equation}
\bigg( \int_\Omega |\nabla_{x_i} u_\delta - \nabla_{x_i} u|^{p_i} \, dx \bigg)^{1/p_i} \leq \sum_{j = 1}^\infty \bigg( \int_{\Omega} |\rho_{\varepsilon_j} * \nabla_{x_i}(\varphi_j u) - \nabla_{x_i}(\varphi_j u)|^{p_i} \, dx \bigg)^{1/p_i} < \delta.
\end{equation}
Since $\delta \in (0,1)$ was arbitrary, we conclude the proof of points (a)-(c). Because we assumed that $\partial\Omega$ is Lipschitz, point (d) follows similarly as in \cite{Giu}. Finally, the claim for functions in $W^{1,(p_i)}(\Omega)$ follows in the same way as point (c).
\end{proof}

We can now define the strongly anisotropic Anzellotti pairings. We first introduce a space of admissible vector fields with integrable divergence, i.e.
\begin{equation}
X^{(p'_i)}_r(\Omega) := \bigg\{ \mathbf{z} \in L^{p'_1}(\Omega; \R^{n_1}) \times ... \times L^{p'_k}(\Omega; \R^{n_k}): \, \mathrm{div}(\mathbf{z}) \in L^r(\Omega) \bigg\}.
\end{equation}
Here, $p'_i$ is the dual exponent to $p_i$. We denote the vector fields in $X^{(p'_i)}_r(\Omega)$ in the following way: $\z = (\z_1, ..., \z_k)$, where $\z_i \in L^{p'_i}(\Omega; \R^{n_i})$. Since in this paper we assume that $p_1 = 1$, the first component of every vector field in $X^{(p'_i)}_r(\Omega)$ is bounded.

Similarly to the classical case, we now introduce the following joint condition on the function $u$ and vector field $\mathbf{z}$ which allows us to define Anzellotti pairings. From now on, we assume that
\begin{equation}\label{eq:conditionforuz}
u \in BV^{(p_i)}(\Omega) \cap L^{\max(p_k,q)}(\Omega) \quad \mbox{and} \quad \mathbf{z} \in X^{(p'_i)}_{q'}(\Omega).
\end{equation}
The main setting to which we apply this construction is when $u \in BV_{0,w}^{(p_i)}(\Omega)$; then, we have the Poincar\'e inequality, and as a consequence we have $u \in L^{p_k}(\Omega)$ and it is sufficient to take $q = p_k$. However, it is not the only possible choice, and for instance one can take $\Omega$ to be sufficiently regular so that a Sobolev embedding $BV^{(p_i)}(\Omega) \xhookrightarrow{} L^{\overline{p}}(\Omega)$ holds; then, the assumption on $q$ can be interpreted as a closeness condition for the exponents.

For the standard Anzellotti pairings, usually the case $N = 1$ is considered separately due to the fact that the divergence is just the derivative and vector fields with integrable divergence are Sobolev functions. In our case, when the linear-growth space is one-dimensional, i.e. $n_1 = 1$, it will not make any difference in the proofs, because we will always work with the full divergence $\mathrm{div}(\z)$; since we only assume that \eqref{eq:conditionforuz} holds, we do not know if $\mathrm{div}_{x_i}(\z)$ is integrable for any $i = 1, ..., N$, so we cannot split the divergence into coordinates in the proofs. This will be the main difficulty we face in this Section.

\begin{definition}\label{dfn:anzellottipairing}
Assume that the pair $(u,\z)$ satisfies condition \eqref{eq:conditionforuz}. For all $\varphi \in C_c^\infty(\Omega)$, we set
\begin{align}
\langle (\z, Du), \varphi \rangle := -\int_{\Omega} u \, \mathrm{div}(\varphi \z) \, dx &= - \int_\Omega u \, \varphi \, \mathrm{div}(\z) \, dx - \int_\Omega u \, \z \cdot \nabla \varphi \, dx \\
&= - \int_\Omega u \, \varphi \, \mathrm{div}(\z) \, dx - \sum_{i = 1}^k \int_\Omega u \, \z_i \cdot \nabla_{x_i} \varphi \, dx.
\end{align}
We also define
\begin{equation}
\langle (\z_1, D_{x_1} u), \varphi \rangle = \langle (\z, Du), \varphi \rangle - \sum_{i = 2}^k \int_\Omega \varphi \, \z_i \cdot \nabla_{x_i} u \, dx. 
\end{equation}
\end{definition}

Note that under condition \eqref{eq:conditionforuz} all the integrals are well-defined and finite. The newly defined objects $(\mathbf{z},Du)$ and $(\mathbf{z}_1, D_{x_1} u)$ a priori are distributions; in the next Proposition, we prove that they are actually Radon measures.

\begin{proposition}\label{prop:propertiesofanzellottipairing}
Assume that the pair $(u,\z)$ satisfies condition \eqref{eq:conditionforuz}. Then, the distribution $(\z,Du)$ is a Radon measure in $\Omega$. Moreover,
\begin{equation}
\bigg\vert \int_{B} (\z,Du) \bigg\vert \leq \| \z_1 \|_{\infty} \int_{B} \vert D_{x_1} u \vert + \sum_{i=2}^k \| \z_i \|_{p'_i} \| \nabla_{x_i} u \|_{p_i}
\end{equation}
for any Borel set $B \subseteq \Omega$. Moreover, $(\z_1, D_{x_1} u)$ is also a Radon measure and it satisfies
\begin{equation}
\bigg\vert \int_B (\z_1, D_{x_1} u) \bigg\vert \leq \| \z_1 \|_{\infty} \int_B |D_{x_1} u|
\end{equation}
for any Borel set $B \subseteq \Omega$, i.e. it is absolutely continuous with respect to $\vert D_{x_1} u \vert$.
\end{proposition}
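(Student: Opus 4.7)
The plan is to mimic the classical Anzellotti argument, reducing to smooth approximants via Proposition \ref{prop:meyersserrin} and then passing to the limit. Because we cannot split the divergence coordinatewise, the estimates must be carried out on the full pairing $(\z,Du)$ first, and only afterwards will the absolute continuity of $(\z_1,D_{x_1}u)$ fall out by subtraction.

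\textbf{Step 1: Smooth approximation.} Given $u$ satisfying \eqref{eq:conditionforuz}, apply Proposition \ref{prop:meyersserrin} with $q = \max(p_k,q)$ to obtain $u_n \in C^\infty(\Omega)\cap W^{1,1}(\Omega)$ with $u_n \to u$ in $L^{\max(p_k,q)}(\Omega)$ (weakly$^*$ if this exponent is $\infty$), $\int_\Omega |\nabla_{x_1} u_n|\,dx \to \int_\Omega |D_{x_1}u|$, and $\nabla_{x_i} u_n \to \nabla_{x_i} u$ in $L^{p_i}$ for $i\ge 2$. For any $\varphi \in C_c^\infty(\Omega)$, integration by parts is legitimate for the smooth $u_n$, so from Definition \ref{dfn:anzellottipairing} one computes
\begin{equation}
\langle (\z,Du_n),\varphi\rangle = \int_\Omega \varphi\,\z\cdot\nabla u_n\,dx = \sum_{i=1}^k \int_\Omega \varphi\,\z_i\cdot\nabla_{x_i} u_n\,dx.
\end{equation}

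\textbf{Step 2: Localized H\"older estimate and passage to the limit.} Let $K = \mathrm{supp}(\varphi)$ and fix a relatively compact open set $U\supset K$ with $|D_{x_1}u|(\partial U)=0$. H\"older's inequality applied on $U$ gives
\begin{equation}
|\langle (\z,Du_n),\varphi\rangle| \le \|\varphi\|_\infty\Big(\|\z_1\|_\infty \int_U |\nabla_{x_1}u_n|\,dx + \sum_{i=2}^k \|\z_i\|_{L^{p'_i}(U)}\|\nabla_{x_i}u_n\|_{L^{p_i}(U)}\Big).
\end{equation}
On the left, each defining integral from Definition \ref{dfn:anzellottipairing} passes to the limit: the term $-\int u_n\varphi\,\mathrm{div}(\z)$ converges since $u_n\to u$ in $L^{\max(p_k,q)}$ and $\mathrm{div}(\z)\in L^{q'}$, while the terms $-\int u_n\z_i\cdot\nabla_{x_i}\varphi$ converge by the same $L^{\max(p_k,q)}$ convergence together with $\z_i\in L^{p'_i}$ (using $L^{\max(p_k,q)}\hookrightarrow L^{p_i}$ on the bounded set $K$). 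On the right, global strict convergence $\int_\Omega|\nabla_{x_1}u_n|\,dx\to\int_\Omega|D_{x_1}u|$ together with lower semicontinuity of the total variation on the open set $\Omega\setminus\overline U$ yields $\limsup_n\int_U|\nabla_{x_1}u_n|\,dx\le\int_{\overline U}|D_{x_1}u|=\int_U|D_{x_1}u|$ (the last equality uses $|D_{x_1}u|(\partial U)=0$). Hence in the limit
\begin{equation}
|\langle (\z,Du),\varphi\rangle| \le \|\varphi\|_\infty\Big(\|\z_1\|_\infty \int_U |D_{x_1}u| + \sum_{i=2}^k \|\z_i\|_{L^{p'_i}(U)}\|\nabla_{x_i}u\|_{L^{p_i}(U)}\Big).
\end{equation}

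\textbf{Step 3: Radon measure and Borel extension.} The previous inequality shows that $(\z,Du)$ is a distribution of order zero whose variation is finite on every compact set, so the Riesz representation theorem delivers a (signed) Radon measure on $\Omega$. The claimed bound on a Borel set $B$ follows by approximating $\chi_B$ by functions in $C_c(\Omega)$ and exhausting $U\supset K\supset B$ via outer regularity of the Radon measures $|D_{x_1}u|$ and $\nabla_{x_i}u\,dx$; in the limit $U\downarrow B$ one recovers exactly the stated estimate.

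\textbf{Step 4: The pairing $(\z_1,D_{x_1}u)$.} From the very definition, $(\z_1,D_{x_1}u) = (\z,Du) - \sum_{i=2}^k \z_i\cdot\nabla_{x_i}u\,dx$, and the subtracted term is plainly a finite Radon measure absolutely continuous w.r.t.\ Lebesgue, so $(\z_1,D_{x_1}u)$ is itself a Radon measure. For the sharper bound, repeat Steps 1--2 after noting that for smooth $u_n$ one has $\langle(\z_1,D_{x_1}u_n),\varphi\rangle = \int_\Omega\varphi\,\z_1\cdot\nabla_{x_1}u_n\,dx$, which is estimated by $\|\varphi\|_\infty\|\z_1\|_\infty\int_U|\nabla_{x_1}u_n|\,dx$; passing to the limit and localizing as above yields $|\langle(\z_1,D_{x_1}u),\varphi\rangle|\le\|\varphi\|_\infty\|\z_1\|_\infty\int_U|D_{x_1}u|$, from which absolute continuity with respect to $|D_{x_1}u|$ follows on Borel sets by outer regularity.

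\textbf{Main obstacle.} The principal subtlety is precisely that the hypothesis only controls the \emph{full} divergence $\mathrm{div}(\z)$, so the coordinatewise decomposition used in the classical isotropic proof is unavailable. This is handled by keeping the approximation step purely on $u$ and exploiting the Meyers-Serrin-type convergence of Proposition \ref{prop:meyersserrin} simultaneously in the $L^{p_i}$ norms for $i\ge 2$ and in strict topology for the $x_1$-variation. The second delicate point is converting the \emph{global} strict convergence $\int_\Omega|\nabla_{x_1}u_n|\to\int_\Omega|D_{x_1}u|$ into a \emph{local} upper bound on $U$, for which the choice of $U$ with $|D_{x_1}u|(\partial U)=0$ and the lower semicontinuity on the complement are essential.
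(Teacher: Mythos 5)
Your proof is correct and follows essentially the same route as the paper: approximate $u$ by smooth functions via the anisotropic Meyers--Serrin theorem, derive the estimate for smooth approximants by integration by parts and H\"older, pass to the limit using strict convergence of the $x_1$-variation and strong $L^{p_i}$-convergence for $i\ge 2$, and invoke the Riesz representation theorem; the pairing $(\z_1,D_{x_1}u)$ is then handled by subtracting the absolutely continuous part. Your additional localization to a set $U\supset\mathrm{supp}\,\varphi$ with $|D_{x_1}u|(\partial U)=0$ is a welcome refinement that actually justifies the Borel-set form of the estimate, which the paper passes over more quickly.
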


\begin{proof}
For now, assume additionally that $u \in C^\infty(\Omega)$. We note that  $\varphi \z \in X_{q'}^{(p'_i)}(\Omega)$ for all $\varphi \in C_c^\infty(\Omega)$. Therefore, using the distributional definition of the divergence we get
\begin{align}
|\langle (\z, Du), \varphi \rangle| = \left|-\int_{\Omega} u \, \mathrm{div}(\varphi \z) \, dx \right| &=  \left|\int_{\Omega} \nabla u \cdot (\varphi \z) \, dx \right| = \left|\int_{\Omega} \varphi (\z \cdot \nabla u) \, dx \right| \\
&\leq \| \varphi \|_\infty \left| \int_{\Omega} \z \cdot \nabla u \, dx \right| \leq \| \varphi \|_\infty \bigg( \sum_{i = 1}^k \| \z_i \|_{p'_i}  \| \nabla_{x_i} u \|_{p_i} \bigg).
\end{align}
In the general case, assuming that $u \in BV^{(p_i)}(\Omega)$ satisfies the assumption \eqref{eq:conditionforuz}, take the sequence $u_j \in W^{1,1}(\Omega) \cap C^\infty(\Omega)$ given by the anisotropic Meyers-Serrin theorem (Proposition \ref{prop:meyersserrin}). Then, for any $\varphi \in C_c^\infty(\Omega)$ we get
$$
\lim_{j \rightarrow \infty} \langle (\z, Du_j), \varphi \rangle = \lim_{j \rightarrow \infty} -\int_{\Omega} u_j \, \mathrm{div}(\varphi \z) \, dx = - \int_{\Omega} u \, \mathrm{div}(\varphi \z)  \, dx  = \langle (\z, Du), \varphi \rangle.$$
As a consequence,
\begin{align}
|\langle (\z, Du), \varphi \rangle| = \lim_{j \rightarrow \infty} |\langle (\z, Du_j), \varphi \rangle| &\leq \lim_{j \rightarrow \infty} \| \varphi \|_\infty \bigg( \sum_{i = 1}^k \| \z_i \|_{p'_i}  \| \nabla_{x_i} u_j \|_{p_i} \bigg) \\
&= \| \varphi \|_\infty \bigg( \| \z_1 \|_\infty \int_\Omega |D_{x_1} u| + \sum_{i = 2}^k \| \z_i \|_{p'_i}  \| \nabla_{x_i} u \|_{p_i} \bigg).
\end{align}
Thus, $(\z, Du)$ is a continuous functional on the space of smooth functions (equipped with the supremum norm). Since smooth functions are dense in continuous functions in the supremum norm, $(\z, Du)$ defines a continuous functional on the space $C(\Omega)$. By the Riesz representation theorem, we get that $(\z, Du)$ is a Radon measure and
\begin{equation}
\bigg\vert \int_{B} (\z,Du) \bigg\vert \leq \| \z_1 \|_{\infty} \int_{B} \vert D_{x_1} u \vert + \sum_{i=2}^k \| \z_i \|_{p'_i} \| \nabla_{x_i} u \|_{p_i}.
\end{equation}
We now turn our attention to the second defined pairing, i.e. $(\z_1, D_{x_1} u)$. It is clearly a Radon measure, since $(\z,Du)$ is a Radon measure and the difference between these two objects is a function $(\sum_{i=2}^k \z_i \cdot \nabla_{x_i} u) \in L^1(\Omega)$. We make a similar computation and get that for any $u \in C^\infty(\Omega)$ and $\varphi \in C_c^\infty(\Omega)$ we have
\begin{align}\label{eq:boundforsecondpairing}
|\langle (\z_1, D_{x_1} u), \varphi \rangle| &= \left|-\int_{\Omega} u \, \mathrm{div}(\varphi \z) \, dx - \sum_{i=2}^k \varphi \, \int_\Omega \z_i \cdot \nabla_{x_i} u \, dx \right| \\ 
&=  \left|\int_{\Omega} \nabla u \cdot (\varphi \z) \, dx - \sum_{i=2}^k \int_\Omega \varphi \, \z_i \cdot \nabla_{x_i} u \, dx \right| \\
&= \left|\int_{\Omega} \varphi (\z_1 \cdot \nabla_{x_1} u) \, dx \right| \leq \| \varphi \|_\infty \| \z_1 \|_\infty  \| \nabla_{x_1} u \|_1.
\end{align}
In the general case, assuming that $u \in BV^{(p_i)}(\Omega)$ satisfies the assumption \eqref{eq:conditionforuz}, we again take the sequence $u_j \in W^{1,1}(\Omega) \cap C^\infty(\Omega)$ given by the anisotropic Meyers-Serrin theorem (Proposition \ref{prop:meyersserrin}). Then, for any $\varphi \in C_c^\infty(\Omega)$ we get
\begin{align}\label{eq:approximationforsecondpairing}
\lim_{j \rightarrow \infty} \langle (\z_1, D_{x_1} u_j), \varphi \rangle &= \lim_{j \rightarrow \infty} \bigg( -\int_{\Omega} u_j \, \mathrm{div}(\varphi \z) \, dx - \sum_{i=2}^k \int_\Omega \varphi \, \z_i \cdot \nabla_{x_i} u_j \, dx \bigg) \\
&= -\int_{\Omega} u \, \mathrm{div}(\varphi \z) \, dx - \sum_{i=2}^k \int_\Omega \varphi \, \z_i \cdot \nabla_{x_i} u \, dx = \langle (\z_1, D_{x_1} u), \varphi \rangle.
\end{align}
Consequently,
\begin{align}
|\langle (\z_1, D_{x_1} u), \varphi \rangle| &= \lim_{j \rightarrow \infty} |\langle (\z_1, D_{x_1} u_j), \varphi \rangle| \\
&\leq \lim_{j \rightarrow \infty} \| \varphi \|_\infty \| \z_1 \|_\infty  \| \nabla_{x_1} u_j \|_1 = \| \varphi \|_\infty \| \z_1 \|_\infty \int_\Omega |D_{x_1} u|,
\end{align}
and arguing similarly as above we get that
\begin{equation}
\bigg\vert \int_B (\z_1, D_{x_1} u) \bigg\vert \leq \| \z_1 \|_{\infty} \int_B |D_{x_1} u|, 
\end{equation}
so in particular we have $(\z_1, D_{x_1} u) \ll \vert D_{x_1} u \vert$, which concludes the proof.
\end{proof}

The proof of this Proposition justifies the notation $(\z_1, D_{x_1} u)$ in the following way. Given two vector fields $\z, \widetilde{\z}$ which satisfy condition \eqref{eq:conditionforuz} for some function $u$, if we have that $\z_1 = \widetilde{\z}_1$ a.e., then $(\z_1, D_{x_1} u) = (\widetilde{\z}_1, D_{x_1} u)$ as measures. Indeed, whenever $u \in C^\infty(\Omega)$, arguing as in \eqref{eq:boundforsecondpairing} for all $\varphi \in C_c^\infty(\Omega)$ we have
\begin{align}
\langle (\z_1, D_{x_1} u), \varphi \rangle = \int_{\Omega} \varphi (\z_1 \cdot \nabla_{x_1} u) \, dx = \int_{\Omega} \varphi (\widetilde{\z}_1 \cdot \nabla_{x_1} u) \, dx = \langle (\widetilde{\z}_1, D_{x_1} u), \varphi \rangle,
\end{align}
and by considering approximations $u_j$ of $u$ as in the anisotropic Meyers-Serrin theorem (Proposition \ref{prop:meyersserrin}) and arguing as in \eqref{eq:approximationforsecondpairing} we get that for all $\varphi \in C_c^\infty(\Omega)$
\begin{align}
\langle (\z_1, D_{x_1} u), \varphi \rangle = \lim_{j \rightarrow \infty} \langle (\z_1, D_{x_1} u_j), \varphi \rangle = \lim_{j \rightarrow \infty} \langle (\widetilde{\z}_1, D_{x_1} u_j), \varphi \rangle = \langle (\widetilde{\z}_1, D_{x_1} u), \varphi \rangle,
\end{align}
so the pairing $(\z_1, D_{x_1} u)$ only depends on the coordinate $\z_1$ of the vector field $\z$.

Now, let us observe that we have a co-area formula similar to the one given in Theorem \ref{thm:isotropiccoarea} for the measure $|D_{x_1} u|$. Recall that for any measurable $u: \Omega \rightarrow \R$ we denote $E_t = \{ u(x) > t \}$. Using the characterisation of the measure $|D_{x_1} u|$ given in \eqref{dfn:tvonx1}, working as in the proof given in \cite[Theorem 5.5.1]{EG} we obtain the following result.

\begin{theorem}\label{thm:anisotropiccoarea}
Let $u \in BV_{x_1}(\Omega)$. Then, for almost all $t \in \R$ we have that $\chi_{E_t} \in BV_{x_1}(\Omega)$ and
\begin{equation}\label{eq:anisotropiccoarea}
\int_\Omega |D_{x_1} u| = \int_{-\infty}^\infty \int_\Omega |D_{x_1} \chi_{E_t}| \, dt.
\end{equation}
Furthermore, whenever $u \in L^1(\Omega)$ is such that the expression on the right-hand side of \eqref{eq:anisotropiccoarea} is finite, we have that $u \in BV_{x_1}(\Omega)$.
\end{theorem}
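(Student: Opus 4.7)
The plan is to prove the two inequalities of \eqref{eq:anisotropiccoarea} separately, following the structure of the classical argument in \cite[Theorem 5.5.1]{EG} but working throughout with the restricted total variation $|D_{x_1}\cdot|$ via its duality characterization \eqref{dfn:tvonx1}. The converse sufficiency statement (last sentence of the theorem) will drop out of the easy direction for free.

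For $\int_\Omega |D_{x_1} u| \leq \int_{-\infty}^\infty \int_\Omega |D_{x_1}\chi_{E_t}|\, dt$, I would take an arbitrary test field $\varphi \in C_c^\infty(\Omega; \R^{n_1})$ with $|\varphi|\leq 1$ and use the layer-cake identity
\[
u(x) = \int_0^\infty \chi_{E_t}(x)\, dt - \int_{-\infty}^0 \bigl(1 - \chi_{E_t}(x)\bigr)\, dt,
\]
together with Fubini and the fact that $\int_\Omega \mathrm{div}_{x_1}(\varphi)\, dx = 0$ (from compact support of $\varphi$), to rewrite $\int_\Omega u\, \mathrm{div}_{x_1}(\varphi)\, dx$ as $\int_{-\infty}^\infty \int_\Omega \chi_{E_t}\, \mathrm{div}_{x_1}(\varphi)\, dx\, dt$. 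Bounding each inner integral by $\int_\Omega |D_{x_1}\chi_{E_t}|$ and taking the supremum over $\varphi$ yields the desired estimate. The same estimate shows that if $u \in L^1(\Omega)$ and the right-hand side of \eqref{eq:anisotropiccoarea} is finite, then $u \in BV_{x_1}(\Omega)$.

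For the reverse inequality I would proceed in two steps. First, for a smooth $v \in C^\infty(\Omega) \cap W^{1,1}(\Omega)$, slicing in $x_1$ and applying the classical one-direction co-area formula to each section $v(\cdot, y)$ on $\Omega_1$ reduces the equality to a Fubini-type slicing identity of the form
\[
\int_{\Omega_2} |D\chi_{F_t(y)}|(\Omega_1)\, dy = \int_\Omega |D_{x_1}\chi_{E_t}|,
\]
where $F_t(y) = \{v(\cdot, y) > t\}$ and $E_t = \{v > t\}$. This identity is the main technical step: the inequality $\leq$ is immediate from \eqref{dfn:tvonx1} by plugging in sectional test fields $\varphi(\cdot, y)$, while the reverse requires assembling near-optimal sections into a single global admissible vector field via a measurable-selection and mollification procedure.

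Finally I would pass from smooth to general $u \in BV_{x_1}(\Omega)$ by an anisotropic Meyers-Serrin argument: the partition-of-unity/mollification construction of Proposition \ref{prop:meyersserrin} adapts to $BV_{x_1}(\Omega)$ verbatim (only the $x_1$-direction bounds are used), producing a sequence $u_j \in C^\infty(\Omega) \cap W^{1,1}(\Omega)$ with $u_j \to u$ in $L^1(\Omega)$ and $\int_\Omega |\nabla_{x_1} u_j|\, dx \to \int_\Omega |D_{x_1} u|$. Extracting a subsequence so that $u_j \to u$ a.e., one obtains $\chi_{E_t^j} \to \chi_{E_t}$ in $L^1(\Omega)$ for almost every $t$, and lower semicontinuity of $v \mapsto \int_\Omega |D_{x_1} v|$ (immediate from \eqref{dfn:tvonx1}) combined with Fatou gives the desired inequality after integrating in $t$ and invoking the smooth case. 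The expected main obstacle is the set-slicing identity in the smooth step; once that is in hand, everything else is routine.
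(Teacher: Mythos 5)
Your proposal is correct and follows precisely the route the paper indicates (the paper gives no written proof of this theorem, only the remark that one argues as in \cite[Theorem 5.5.1]{EG} using the duality characterisation \eqref{dfn:tvonx1}): the layer-cake identity plus Fubini for the inequality $\int_\Omega |D_{x_1}u| \leq \int_{-\infty}^{\infty}\int_\Omega |D_{x_1}\chi_{E_t}|\,dt$ (which also yields the final sufficiency claim), and smooth approximation plus lower semicontinuity of $v \mapsto \int_\Omega |D_{x_1}v|$ and Fatou for the reverse, with the smooth case handled by slicing in the $x_1$-variables. One simplification worth noting: in the smooth step you only need the inequality $\int_\Omega |D_{x_1}\chi_{E_t}| \leq \int_{\Omega_2} |D\chi_{F_t(y)}|(\Omega_1)\,dy$ --- the direction you yourself call immediate from \eqref{dfn:tvonx1} via sectional test fields --- because combined with the classical co-area formula on the sections $v(\cdot,y)$ and Fubini this already gives $\int_{-\infty}^{\infty}\int_\Omega |D_{x_1}\chi_{E_t}|\,dt \leq \int_\Omega |\nabla_{x_1}v|\,dx$, so the measurable-selection and gluing procedure you identify as the main technical obstacle can be dropped entirely (and, as a minor point, the approximants need only lie in $C^\infty(\Omega)\cap L^1(\Omega)$ with $\int_\Omega |\nabla_{x_1}u_j|\,dx < \infty$, not in $W^{1,1}(\Omega)$).
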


Similarly to the standard BV case, for $u \in BV_{x_1}(\Omega)$ we denote by $\frac{D_{x_1} u}{|D_{x_1} u|}$ the Radon-Nikodym derivative of $D_{x_1 }u$ with respect to $|D_{x_1} u|$. As a consequence of the co-area formula given in Theorem \ref{thm:anisotropiccoarea}, we have that for almost all $t \in \R$ 
$$ \frac{D_{x_1} \chi_{E_t}}{|D_{x_1} \chi_{E_t}|} = \frac{D_{x_1} u}{|D_{x_1} u|} \quad |D_{x_1} \chi_{E_t}|-\mbox{a.e. in } \Omega.$$
A natural question is whether the strongly anisotropic Anzellotti pairing defined above satisfies an analogue of the co-area formula (see Theorem \ref{thm:isotropiccoarea} in the isotropic case). Unfortunately, the answer is negative: an analogue of the isotropic co-area formula for the strongly anisotropic Anzellotti pairing would be that
\begin{equation*}
\langle (\z_1, D_{x_1} u), \varphi \rangle = \int_{-\infty}^\infty \langle (\z_1, D_{x_1} \chi_{E_t}),\varphi \rangle \, dt
\end{equation*}
holds for all functions $\varphi \in C_c^\infty(\Omega)$. Unfortunately, the object on the right-hand side is not even well-defined, as in general for $u \in BV^{(p_i)}(\Omega)$ its characteristic function $\chi_{E_t}$ does not belong to the same space. Therefore, we now proceed to prove a weaker result (Proposition \ref{prop:lipschitzfunctionofdensity}) which will act as a replacement of the co-area formula in the proofs in Section \ref{sec:dirichlet}, in particular in Lemma \ref{lem:completeaccretivity}.

By Proposition \ref{prop:propertiesofanzellottipairing}, the measure $(\z_1 ,D_{x_1} u)$ is absolutely continuous with respect to the measure $|D_{x_1} u|$. By the Radon-Nikodym theorem, there exists a measurable function $\theta(\z_1,D_{x_1} u,x)$ which is the density of the measure $(\z_1,D_{x_1} u)$ with respect to $|D_{x_1} u|$, i.e. for all Borel sets $B \subset \Omega$ we have
\begin{equation}\label{Borel}
\int_B (\z_1, D_{x_1} u) = \int_B \theta(\z_1, D_{x_1} u,x) \, d|D_{x_1} u|.
\end{equation}
Moreover, by the estimate in Proposition \ref{prop:propertiesofanzellottipairing}, we have that 
$$|\theta(\z_1, D_{x_1} u,x)| \leq \| \z_1 \|_\infty \quad |D_{x_1} u|-\mbox{a.e. in } \Omega.$$

Taking a sequence of mollifications of a vector field $\z \in X_{q'}^{(p'_i)}(\Omega)$, we can easily prove the following result.

\begin{lemma}\label{lem:approximationofz}
For every $\z \in X_{q'}^{(p'_i)}(\Omega)$, there exists a sequence $\z^n \in C^\infty(\Omega; \mathbb{R}^N) \cap X_{q'}^{(p'_i)}(\Omega)$ with the following properties:

\begin{enumerate}
\item $\| \z_1^n \|_\infty \leq \| \z_1 \|_\infty$;

\item $\z_1^n \rightharpoonup \z_1$ weakly* in $L^\infty(\Omega; \mathbb{R}^{n_1})$ and $\z_1^n \rightarrow \z_1$ in $L^r_{\rm loc}(\Omega;\mathbb{R}^{n_1})$ for all $r \in [1,\infty)$;

\item $\z_i^n \rightarrow \z_i$ in $L^{p'_i}(\Omega;\mathbb{R}^{n_i})$ for all $i = 2, ..., k$;

\item $\z^n(x) \rightarrow \z(x)$ at every Lebesgue point $x$ of $\mathbf{z}$ and uniformly in sets of uniform continuity of $\z$;

\item $\mathrm{div}(\z^n) \rightarrow \mathrm{div}(\z)$ in $L^{q'}_{\rm loc}(\Omega)$.
\end{enumerate}
\end{lemma}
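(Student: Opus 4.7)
The plan is to obtain $\z^n$ by standard mollification. Extend each component $\z_i$ of $\z$ to a function $\widetilde{\z}_i$ on $\R^N$ by setting it equal to zero outside $\Omega$, fix a symmetric nonnegative mollifier $\rho \in C_c^\infty(\R^N)$ with $\int_{\R^N} \rho \, dx = 1$ and $\mathrm{supp}(\rho) \subset B(0,1)$, and set $\rho_n(x) = n^N \rho(nx)$. Define $\z^n := (\rho_n * \widetilde{\z}_1, \ldots, \rho_n * \widetilde{\z}_k)$, restricted to $\Omega$. Each $\z^n$ is smooth on $\R^N$, and since $\Omega$ is bounded, the smooth function $\mathrm{div}(\z^n)$ is bounded on $\overline{\Omega}$, so in particular $\z^n \in X_{q'}^{(p'_i)}(\Omega)$.

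Properties (a)--(d) are then standard consequences of mollification theory. Property (a) is Young's convolution inequality, which gives $\|\rho_n * \widetilde{\z}_1\|_\infty \leq \|\rho_n\|_1 \|\widetilde{\z}_1\|_\infty = \|\z_1\|_\infty$. For property (b), the local strong convergence $\z_1^n \to \z_1$ in $L^r_{\mathrm{loc}}(\Omega;\R^{n_1})$ for $r \in [1,\infty)$ is standard since $\widetilde{\z}_1 \in L^r_{\mathrm{loc}}(\R^N)$; the weak$^*$ convergence in $L^\infty(\Omega)$ is obtained by testing against $\phi \in L^1(\Omega)$ extended by zero and applying Fubini to rewrite $\int_\Omega \z_1^n \phi \, dx = \int_{\R^N} \widetilde{\z}_1 (\rho_n * \phi) \, dy$, then using $\rho_n * \phi \to \phi$ in $L^1(\R^N)$ together with $\widetilde{\z}_1 \in L^\infty$. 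Property (c) is the $L^{p'_i}$ continuity of mollification applied to the zero extension, valid because $p'_i < \infty$ for $i \geq 2$ (as we assume $p_i > 1$ there). Finally, property (d) is immediate at Lebesgue points of $\z$ by the Lebesgue differentiation theorem, and uniform convergence on sets of uniform continuity follows from the pointwise estimate $|\z^n(x) - \z(x)| \leq \int_{\R^N} \rho_n(x-y) |\z(y) - \z(x)| \, dy$ combined with the uniform continuity hypothesis.

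The main technical point, and the only step where we genuinely use the hypothesis $\mathrm{div}(\z) \in L^{q'}(\Omega)$, is property (e). For any compact set $K \Subset \Omega$ and all $n$ large enough that $\mathrm{dist}(K, \partial \Omega) > 1/n$, for every $x \in K$ the function $\rho_n(x - \cdot)$ is a test function compactly supported in $\Omega$, and integration by parts against the distributional divergence gives the pointwise identity
\begin{equation}
\mathrm{div}(\z^n)(x) = (\rho_n * \mathrm{div}(\z))(x) \quad \mbox{for every } x \in K,
\end{equation}
where on the right-hand side we use that $\mathrm{div}(\z) \in L^{q'}(\Omega)$. The classical $L^{q'}$ convergence of mollifiers applied to $\mathrm{div}(\z)$ then yields $\mathrm{div}(\z^n) \to \mathrm{div}(\z)$ in $L^{q'}(K)$, which is property (e). The obstacle here is that the identity $\mathrm{div}(\z^n) = \rho_n * \mathrm{div}(\z)$ cannot be extended to a global identity on $\Omega$: the zero extension $\widetilde{\z}$ generally creates an additional singular divergence supported on $\partial\Omega$, so near the boundary $\mathrm{div}(\z^n)$ need not even remain bounded independently of $n$. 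This is precisely why only the local convergence stated in (e) is asserted, and it is exactly what is needed in the applications to the Anzellotti pairings.
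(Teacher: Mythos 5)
Your proposal is correct and follows exactly the route the paper intends: the paper offers no written proof beyond the remark that the lemma follows by ``taking a sequence of mollifications,'' and your argument (mollify the zero extension, use Young's inequality for (a), standard $L^r$-continuity of mollification for (b)--(c), Lebesgue differentiation for (d), and the local identity $\mathrm{div}(\z^n)=\rho_n*\mathrm{div}(\z)$ on compact subsets for (e)) supplies precisely the missing details. Your observation that the zero extension creates a singular divergence on $\partial\Omega$, which is why only \emph{local} convergence of the divergences can be asserted, is the right explanation for the formulation of property (e).
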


As a consequence, we get the following pointwise representation result for the density function $\theta(\z_1, D_{x_1} u, x)$.

\begin{proposition}\label{prop:pointwiseform}
Assume that the pair $(u,\z)$ satisfies condition \eqref{eq:conditionforuz} and suppose that $\z \in C(\Omega; \mathbb{R}^N)$. Then, we have
\begin{equation}\label{eq:pointwiseform}
\theta(\z_1, D_{x_1} u, x) = \z(x) \cdot \frac{D_{x_1} u}{|D_{x_1} u|}(x) \qquad |D_{x_1} u|-\mbox{\rm a.e. in } \Omega.
\end{equation}
\end{proposition}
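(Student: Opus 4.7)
The strategy is to identify the Radon measure $(\z_1, D_{x_1}u)$ with the measure $\z_1 \cdot D_{x_1}u$, i.e.\ the finite $\mathbb{R}$-valued Radon measure obtained by contracting the continuous function $\z_1$ against the $\mathbb{R}^{n_1}$-valued measure $D_{x_1}u$. Once this identification is established, the density of $(\z_1,D_{x_1}u)$ with respect to $|D_{x_1}u|$ is immediately $\z_1(x) \cdot \frac{D_{x_1}u}{|D_{x_1}u|}(x)$ by the very definition of the Radon-Nikodym derivative, which yields \eqref{eq:pointwiseform}.

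The first step is the smooth case. Assume $u \in C^\infty(\Omega)$ satisfies \eqref{eq:conditionforuz}. Then the chain of equalities in \eqref{eq:boundforsecondpairing} gives, for every $\varphi \in C_c^\infty(\Omega)$,
\begin{equation*}
\langle (\z_1, D_{x_1}u), \varphi \rangle \;=\; \int_\Omega \varphi \, \z_1 \cdot \nabla_{x_1} u \, dx,
\end{equation*}
so in the smooth case $(\z_1, D_{x_1}u) = \z_1 \cdot \nabla_{x_1}u \, dx = \z_1 \cdot dD_{x_1}u$ as Radon measures, with no continuity assumption on $\z$ yet needed. For general $u \in BV^{(p_i)}(\Omega)$ satisfying \eqref{eq:conditionforuz}, apply Proposition \ref{prop:meyersserrin} to obtain $u_j \in C^\infty(\Omega)\cap W^{1,1}(\Omega)$ with $u_j \to u$ in $L^{\max(p_k,q)}(\Omega)$, $\nabla_{x_i} u_j \to \nabla_{x_i} u$ in $L^{p_i}$ for $i\ge 2$, and $\int_\Omega |\nabla_{x_1} u_j|\, dx \to \int_\Omega |D_{x_1} u|$. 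The $L^1$-convergence together with the uniform total-variation bound gives weak* convergence $D_{x_1} u_j \rightharpoonup D_{x_1} u$ as $\mathbb{R}^{n_1}$-valued Radon measures on $\Omega$, in duality with $C_c(\Omega;\mathbb{R}^{n_1})$.

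Now the continuity hypothesis $\z \in C(\Omega;\mathbb{R}^N)$ enters. For any $\varphi \in C_c^\infty(\Omega)$, the test function $\varphi \z_1$ lies in $C_c(\Omega;\mathbb{R}^{n_1})$, so the weak* convergence above yields
\begin{equation*}
\int_\Omega \varphi \, \z_1 \cdot \nabla_{x_1} u_j \, dx \;\longrightarrow\; \int_\Omega \varphi \, \z_1 \cdot dD_{x_1} u.
\end{equation*}
Combining this with the smooth case applied to each $u_j$ and with the convergence $\langle (\z_1,D_{x_1}u_j),\varphi\rangle \to \langle (\z_1,D_{x_1}u),\varphi\rangle$ established in \eqref{eq:approximationforsecondpairing} identifies $(\z_1, D_{x_1}u)$ with $\z_1 \cdot D_{x_1}u$ as elements of $\mathcal{M}(\Omega)$, and \eqref{eq:pointwiseform} follows. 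The main delicate point I expect is justifying that the Meyers-Serrin strict convergence is strong enough to pass to the limit against the merely continuous (not smooth) test function $\varphi \z_1$; this is handled by the standard BV fact recalled above that $L^1$-convergence plus uniform total variation bounds yields weak* convergence of the vector measures against $C_c$ test functions — exactly the regularity available for $\varphi \z_1$ since $\z$ is assumed continuous on $\Omega$.
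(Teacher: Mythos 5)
Your proof is correct, and it is a genuine streamlining of the paper's argument. The paper proceeds in two stages: it first proves the identity for $\z \in C^1(\Omega;\mathbb{R}^N)$ (approximating $u$ by the anisotropic Meyers--Serrin sequence), and then passes to general continuous $\z$ by mollifying the vector field via Lemma \ref{lem:approximationofz} and using local uniform convergence $\z^n \to \z$ on $\mathrm{supp}\,\varphi$. You dispense with the second stage entirely: you observe that the chain of equalities in \eqref{eq:boundforsecondpairing} already gives $\langle (\z_1,D_{x_1}u),\varphi\rangle = \int_\Omega \varphi\,\z_1\cdot\nabla_{x_1}u\,dx$ for smooth $u$ and \emph{arbitrary} admissible $\z$ (no regularity of $\z$ needed there), and that the only place continuity of $\z$ is genuinely used is in making $\varphi\z_1$ an admissible $C_c(\Omega;\mathbb{R}^{n_1})$ test function for the weak* convergence $D_{x_1}u_j \rightharpoonup D_{x_1}u$, which follows from $L^1$-convergence of $u_j$ plus the uniform total variation bound supplied by Proposition \ref{prop:meyersserrin}. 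This buys a shorter proof with a single approximation (of $u$ only, rather than of both $u$ and $\z$); the paper's two-stage route is slightly more redundant but makes explicit that the same mollification machinery (Lemma \ref{lem:approximationofz}) is reused later, e.g.\ in the proof of Proposition \ref{prop:lipschitzfunctionofdensity}. All the ingredients you invoke (the smooth-case identity, the convergence \eqref{eq:approximationforsecondpairing}, and the standard fact that $L^1$-convergence with bounded variations yields weak* convergence of the vector measures against $C_c$ test functions) are available and correctly applied, so I see no gap.
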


\begin{proof}
By the definition of the Radon-Nikodym derivative $\frac{D_{x_1} u}{|D_{x_1} u|}$, condition \eqref{eq:pointwiseform} is equivalent to
\begin{equation}\label{eq:pointwiseform2}
\langle (\z_1, D_{x_1} u), \varphi \rangle = \int_\Omega \varphi \, \z_1 \, dD_{x_1} u \qquad \mbox{for all } \varphi \in C_c^\infty(\Omega).
\end{equation}
We first prove the claim for $\z \in C^1(\Omega; \mathbb{R}^N)$. Take a sequence $u_j \rightarrow u$ as given by the anisotropic Meyers-Serrin theorem (Proposition \ref{prop:meyersserrin}). By the distributional definition of the divergence, for all $\varphi \in C_c^\infty(\Omega)$ we have
\begin{align}
\langle (\z_1, D_{x_1} u_j), \varphi \rangle &= -\int_{\Omega} u_j \, \mathrm{div}(\varphi \z) \, dx - \sum_{i=2}^k \varphi \, \int_\Omega \z_i \cdot \nabla_{x_i} u_j \, dx  \\ 
&=  \int_{\Omega} \nabla u_j \cdot (\varphi \z) \, dx - \sum_{i=2}^k \int_\Omega \varphi \, \z_i \cdot \nabla_{x_i} u_j \, dx  = \int_{\Omega} \varphi (\z_1 \cdot \nabla_{x_1} u_j) \, dx.
\end{align}
By the continuity of $\z$, passing to the limit $j \rightarrow \infty$ we get that equation \eqref{eq:pointwiseform2} holds.

We now allow for general $\z \in C(\Omega; \mathbb{R}^N)$. Take a sequence of approximations $\z^n$ given by Lemma \ref{lem:approximationofz} and for any $\varphi \in C_c^\infty(\Omega)$ calculate
\begin{equation}
\langle (\z_1, D_{x_1} u), \varphi \rangle = \lim_{n \rightarrow \infty} \langle (\z_1^n, D_{x_1} u), \varphi \rangle = \lim_{n \rightarrow \infty} \int_\Omega \varphi \, \z_1^n \, dD_{x_1} u = \int_\Omega \varphi \, \z_1 \, dD_{x_1} u,
\end{equation}
where the last equality follows from continuity of $\z$ and uniform convergence of $\z^n$ to $\z$ on the support of $\varphi$.
\end{proof}

We now proceed to prove the result which will serve as a replacement of the co-area formula for the pairing $(\z_1, D_{x_1} u)$; we show that the Radon-Nikodym derivative $\theta$ is invariant under monotone Lipschitz transformations of the real line. This is a generalisation of \cite[Proposition 2.8]{Anzellotti} to the strongly anisotropic case; however, the proof is quite different from the original. This is because in \cite{Anzellotti} the result was given as a direct consequence of the co-area formula for the Anzellotti pairing, which is not valid in our setting.

\begin{proposition}\label{prop:lipschitzfunctionofdensity}
Assume that the pair $(u,\z)$ satisfies condition \eqref{eq:conditionforuz}. If $T: \mathbb{R} \rightarrow \mathbb{R}$ is a Lipschitz continuous increasing function, then
\begin{equation*}
\theta(\z_1, D_{x_1} (T \circ u),x) = \theta(\z_1, D_{x_1} u,x) \qquad |D_{x_1} u|-\mbox{\rm a.e. in } \Omega.
\end{equation*}
\end{proposition}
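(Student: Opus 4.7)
The plan is to establish the stronger measure identity $(\z_1, D_{x_1}(T \circ u)) = T'(\tilde{u}) \cdot (\z_1, D_{x_1} u)$ as Radon measures on $\Omega$, where $\tilde{u}$ denotes the precise representative of $u$. Once this is in hand, the chain rule for BV functions (applied slice-by-slice in the $x_1$ direction) gives $D_{x_1}(T \circ u) = T'(\tilde{u}) D_{x_1} u$, and the monotonicity of $T$ also gives $|D_{x_1}(T \circ u)| = T'(\tilde{u}) |D_{x_1} u|$. Writing both sides of the measure identity as Radon--Nikodym densities against the common dominating measure $|D_{x_1} u|$ and cancelling the factor $T'(\tilde{u})$ where it is positive will yield the stated equality of densities; on the null set $\{T'(\tilde{u}) = 0\}$ the measure $|D_{x_1}(T \circ u)|$ vanishes, so $\theta(\z_1, D_{x_1}(T \circ u), \cdot)$ is undetermined there and will be redefined to agree with $\theta(\z_1, D_{x_1} u, \cdot)$. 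A preliminary step verifies that the pair $(T \circ u, \z)$ satisfies condition \eqref{eq:conditionforuz}: the Lipschitz bound gives $T \circ u \in L^{\max(p_k, q)}(\Omega)$ and $\nabla_{x_i}(T \circ u) = T'(u) \nabla_{x_i} u \in L^{p_i}(\Omega; \R^{n_i})$ for $i \geq 2$, and the one-dimensional chain rule gives $D_{x_1}(T \circ u) \in \mathcal{M}(\Omega; \R^{n_1})$.

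I first prove the measure identity when $\z$ is continuous. Proposition \ref{prop:pointwiseform} applied to both $u$ and $T \circ u$ yields the pointwise representations
\begin{equation*}
\theta(\z_1, D_{x_1} u, x) = \z_1(x) \cdot \frac{D_{x_1} u}{|D_{x_1} u|}(x), \qquad \theta(\z_1, D_{x_1}(T \circ u), x) = \z_1(x) \cdot \frac{D_{x_1}(T \circ u)}{|D_{x_1}(T \circ u)|}(x),
\end{equation*}
and the chain rule shows that the two Radon--Nikodym unit vectors coincide $|D_{x_1}(T \circ u)|$-a.e., which produces the identity in this case. For general $\z \in X^{(p'_i)}_{q'}(\Omega)$ I approximate by the smooth sequence $\z^n$ from Lemma \ref{lem:approximationofz}, so that the previous step provides, for every $\varphi \in C_c^\infty(\Omega)$,
\begin{equation*}
\langle (\z_1^n, D_{x_1}(T \circ u)), \varphi \rangle = \int_\Omega \varphi \, T'(\tilde{u}) \, \theta(\z_1^n, D_{x_1} u, \cdot) \, d|D_{x_1} u|.
\end{equation*}
The left-hand side converges to $\langle (\z_1, D_{x_1}(T \circ u)), \varphi \rangle$ directly from the defining formula in Definition \ref{dfn:anzellottipairing} combined with the convergence properties of $\z^n$ listed in Lemma \ref{lem:approximationofz}. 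The same computation applied to $(\z_1^n, D_{x_1} u)$ shows that $\theta(\z_1^n, D_{x_1} u, \cdot)$ converges against every $C_c^\infty$ test function to $\theta(\z_1, D_{x_1} u, \cdot)$ in duality with $|D_{x_1} u|$; the uniform bound $\|\theta(\z_1^n, D_{x_1} u, \cdot)\|_\infty \leq \|\z_1\|_\infty$ from Proposition \ref{prop:propertiesofanzellottipairing}, together with the density of $C_c^\infty(\Omega)$ in $L^1(|D_{x_1} u|)$, upgrades this to weak$^*$ convergence in $L^\infty(|D_{x_1} u|)$. Taking $\varphi T'(\tilde{u}) \in L^1(|D_{x_1} u|)$ as the test function then passes the limit on the right-hand side and delivers the measure identity for the original $\z$.

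The principal obstacle is precisely this last passage to the limit: the natural weak$^*$ convergence of the Radon measures $(\z_1^n, D_{x_1} u)$ holds a priori only against continuous test functions, whereas the factor $T'(\tilde{u})$ is merely bounded and typically discontinuous. The detour through weak$^*$ convergence of the densities in $L^\infty(|D_{x_1} u|)$, made possible by their uniform $L^\infty$ bound, is what unlocks the step. Once the measure identity is in place, the cancellation described in the first paragraph yields the $|D_{x_1} u|$-a.e. equality of densities asserted in the proposition.
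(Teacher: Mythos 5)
Your argument hinges on the chain rule in the form $D_{x_1}(T\circ u)=T'(\tilde u)\,D_{x_1}u$ and $|D_{x_1}(T\circ u)|=T'(\tilde u)\,|D_{x_1}u|$, and this is precisely where the proof breaks down. First, even for full $BV$ functions the Vol'pert--Ambrosio--Dal Maso chain rule does not take this form on the jump part: there the correct density is the difference quotient $\bigl(T(u^+)-T(u^-)\bigr)/(u^+-u^-)$, not $T'(\tilde u)$, so your ``stronger measure identity'' $(\z_1,D_{x_1}(T\circ u))=T'(\tilde u)\,(\z_1,D_{x_1}u)$ is false whenever $T$ is not affine on the jump interval. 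Second, and more fundamentally, for a function that is merely in $BV^{(p_i)}(\Omega)$ no chain rule for the partial measure $D_{x_1}u$ is established in this paper (it would require a decomposition of $D_{x_1}u$ into diffuse and jump parts and fine properties of the precise representative adapted to the $x_1$-directions); the remark immediately following the proposition treats the validity of such a chain rule as an \emph{additional hypothesis} on $u$, under which one obtains the conclusion only $|D_{x_1}(T\circ u)|$-a.e.\ rather than $|D_{x_1}u|$-a.e. Your closing ``redefinition'' of $\theta(\z_1,D_{x_1}(T\circ u),\cdot)$ on $\{T'(\tilde u)=0\}$ also does not prove the stated $|D_{x_1}u|$-a.e.\ identity; it only declares it.

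The paper's proof is designed to avoid the chain rule altogether. It first proves the identity for truncations $T_{a,b}$ by a direct absolute-continuity argument (if the densities differed on a set where $a\le u\le b$, the measures $(\z_1,D_{x_1}u)$ and $(\z_1,D_{x_1}T_{a,b}(u))$ would differ there, contradicting the bound $|(\z_1,D_{x_1}(u-T_{a,b}(u)))|\le\|\z_1\|_\infty|D_{x_1}(u-T_{a,b}(u))|$ combined with the co-area formula of Theorem \ref{thm:anisotropiccoarea}). It then uses Proposition \ref{prop:pointwiseform} for the smooth approximations $\z^n$ of Lemma \ref{lem:approximationofz} together with the co-area formula to disintegrate the pairing over the superlevel sets $E_t$, identifies the weak$^*$ limit $\widetilde\theta(\z_1,u,t)$ with $\theta(\z_1,D_{x_1}u,\cdot)$ for a.e.\ $t$, and concludes from the elementary set identity $\{u>t\}=\{T\circ u>T(t)\}$. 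Your weak$^*$-convergence mechanism for the densities (uniform $L^\infty$ bound plus convergence on a dense class) is sound and close in spirit to the paper's treatment of $\widetilde\theta$, but it is being applied to pass to the limit in an identity that has not been established. To repair the proof you would need to replace the chain-rule step by an argument based on level sets, as the paper does.
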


\begin{proof}
For $a,b \in \mathbb{R}$ with $a < b$, denote by $T_{a,b}(u)$ the truncation of $u$ at levels $a,b$, i.e.
\begin{equation*}
T_{a,b}(u) = \threepartdef{b}{u(x) \geq b;}{u(x)}{u(x) \in (a,b);}{a}{u(x) \leq a.}
\end{equation*}
Then, by Theorem \ref{thm:anisotropiccoarea} we have $D_{x_1} T_{a,b}(u) \in BV_{x_1}(\Omega)$ and $\int_{\Omega} |D_{x_1} T_{a,b}(u)| \leq \int_{\Omega} |D_{x_1} u|$. 

We first prove that for all $a,b \in \R$ we have
\begin{equation}\label{eq:equalitythetatruncation}
\theta(\z_1, D_{x_1} u,x) = \theta(\z_1, D_{x_1} T_{a,b}(u),x) \qquad |D_{x_1} T_{a,b}(u)|-\mbox{a.e. in } \Omega.
\end{equation}
Suppose otherwise; then, there exists a Borel set $B \subset \Omega$ such that $a \leq u(x) \leq b$ almost everywhere on $B$ and 
$$\theta(\z_1, D_{x_1} u,x) > \theta(\z_1, D_{x_1} T_{a,b}(u),x) \qquad |D_{x_1} T_{a,b}(u)|-\mbox{a.e. on } B;$$
the case when the opposite inequality holds is handled similarly. Hence,
\begin{align}\label{eq:truncationbycontradiction}
\int_B (\z_1, D_{x_1} u) &= \int_B \theta(\z_1, D_{x_1} u,x) |D_{x_1} u| = \int_B \theta(\z_1, D_{x_1} u,x) |D_{x_1} T_{a,b}(u)| \\
&> \int_B \theta(\z_1, D_{x_1} T_{a,b}(u),x) |D_{x_1} T_{a,b}(u)| = \int_B (\z_1, DT_{a,b}(u)).
\end{align}
Now, notice that
\begin{align}
\bigg| \int_B (\z_1, D_{x_1} u) - &\int_B (\z_1, D_{x_1} T_{a,b}(u)) \bigg| = \bigg| \int_B (\z_1, D_{x_1}(u - T_{a,b}(u))) \bigg| \\
&\leq \| \z_1 \|_\infty \int_B |D_{x_1}(u-T_{a,b}(u))| = \| \z_1 \|_{\infty} \int_{-\infty}^\infty \int_B |D_{x_1} \chi_{\{ u-T_{a,b}(u) \geq t \}}| \, dt \\
&= \| \z_1 \|_{\infty} \int_{-\infty}^a \int_B |D_{x_1} \chi_{E_t}| \, dt + \| \z_1 \|_{\infty} \int_{b}^\infty \int_B |D_{x_1} \chi_{E_t}| \, dt = 0,
\end{align}
since $a \leq u \leq b$ a.e. on $B$. This gives a contradiction with \eqref{eq:truncationbycontradiction}, so \eqref{eq:equalitythetatruncation} holds.

Now, take an approximating sequence $\z^n \in C^\infty(\Omega; \mathbb{R}^N) \cap X_{q'}^{(p'_i)}(\Omega)$ as given in Lemma \ref{lem:approximationofz}. Then, using Proposition \ref{prop:pointwiseform} and Theorem \ref{thm:anisotropiccoarea}, we get
\begin{align}
\langle (\z^n_1, D_{x_1} u), \varphi \rangle &= \int_\Omega \z_1^n(x) \cdot \frac{D_{x_1} u}{|D_{x_1} u|}(x) \, \varphi(x) \, d|D_{x_1} u| \\
&= \int_{-\infty}^{\infty} \bigg( \int_\Omega \z_1^n(x) \cdot \frac{D_{x_1} \chi_{E_t}}{|D_{x_1} \chi_{E_t}|}(x) \, \varphi(x) \, d|D_{x_1} \chi_{E_t}| \bigg) \, dt.
\end{align}
The sequence $\z_1^n \cdot \frac{D_{x_1} \chi_{E_t}}{|D_{x_1} \chi_{E_t}|}$ is bounded in the supremum norm on its domain of definition, i.e. $\z_1^n$ as a smooth function with $\| \z^n \|_\infty \leq 1$ is bounded by $1$ everywhere in $\Omega$ and by definition the function $\frac{D_{x_1} \chi_{E_t}}{|D_{x_1} \chi_{E_t}|}$ is bounded (also by $1$) $|D_{x_1} \chi_{E_t}|$-a.e. in $\Omega$. Denote by $\widetilde{\theta}(\z_1,u,t)$ any weak* limit of the sequence $\z_1^n \cdot \frac{D_{x_1} \chi_{E_t}}{|D_{x_1} \chi_{E_t}|}$ in $L^\infty(\Omega, |D_{x_1} \chi_{E_t}|)$.  

We now prove that $\widetilde{\theta}(\z_1,u,t)$ is uniquely defined. To this end, we compute
\begin{align}
\langle (\z^n_1, D_{x_1} T_{a,b} (u)), \varphi \rangle &= \int_\Omega \z_1^n(x) \cdot \frac{D_{x_1} T_{a,b}(u)}{|D_{x_1} T_{a,b}(u)|}(x) \, \varphi(x) \, d|D_{x_1} u| \\
&= \int_{a}^{b} \bigg( \int_\Omega \z_1^n(x) \cdot \frac{D_{x_1} \chi_{E_t}}{|D_{x_1} \chi_{E_t}|}(x) \, \varphi(x) \, d|D_{x_1} \chi_{E_t}| \bigg) \, dt
\end{align}
and pass to the limit with $n \rightarrow \infty$. By the dominated convergence theorem, we obtain
\begin{align}
\langle (\z_1, D_{x_1} T_{a,b} (u)), \varphi \rangle = \int_{a}^{b} \bigg( \int_\Omega \widetilde{\theta}(\z_1,u,t)(x) \, \varphi(x) \, d|D_{x_1} \chi_{E_t}| \bigg) \, dt.
\end{align}
Since by equation \eqref{eq:equalitythetatruncation} and Theorem \ref{thm:anisotropiccoarea} we can write the left-hand side in the following way
\begin{align}
\langle (\z_1, &D_{x_1} T_{a,b} (u)), \varphi \rangle = \int_\Omega \varphi \, d(\z_1,D_{x_1} T_{a,b}(u)) = \int_\Omega \theta(\z_1, D_{x_1} T_{a,b}(u),x) \, \varphi(x) \, d|D_{x_1} T_{a,b}(u)| \\
&= \int_\Omega \theta(\z_1, D_{x_1} u,x) \, \varphi(x) \, d|D_{x_1} T_{a,b}(u)| = \int_a^b \bigg( \int_\Omega \theta(\z_1, D_{x_1} u,x) \, \varphi(x) \, d|D_{x_1} \chi_{E_t}| \bigg) \, dt,
\end{align}
we get that
\begin{equation}
\int_a^b \bigg( \int_\Omega \theta(\z_1, D_{x_1} u,x) \, \varphi(x) \, d|D_{x_1} \chi_{E_t}| \bigg) \, dt = \int_{a}^{b} \bigg( \int_\Omega \widetilde{\theta}(\z_1,u,t)(x) \, \varphi(x) \, d|D_{x_1} \chi_{E_t}| \bigg) \, dt.
\end{equation}
Since $a$ and $b$ were arbitrary, we get that for almost all $t \in \mathbb{R}$ we have
\begin{equation}
\int_\Omega \theta(\z_1, D_{x_1} u,x) \, \varphi(x) \, d|D_{x_1} \chi_{E_t}| = \int_\Omega \widetilde{\theta}(\z_1,u,t)(x) \, \varphi(x) \, d|D_{x_1} \chi_{E_t}|,
\end{equation}
and since $f$ was arbitrary, by a density argument we get that for almost all $t \in \R$
\begin{equation}\label{eq:equalityofthetasforalmostallt}
\theta(\z_1, D_{x_1} u,x) = \widetilde{\theta}(\z_1,u,t)(x) \qquad |D_{x_1} \chi_{E_t}|-\mbox{a.e.}
\end{equation}
In particular, $\widetilde{\theta}(\z_1,u,t)$ is uniquely defined. 

To conclude the proof, notice that
\begin{equation*}
E_t = \{ x \in \Omega: \, u(x) > t \} = \{ x \in \Omega: \, (T \circ u)(x) > T(t) \},
\end{equation*}
so if we take any sequence $\z^n$ approximating $\z$ as in Lemma \ref{lem:approximationofz}, by the anisotropic co-area formula (Theorem \ref{thm:anisotropiccoarea}) we have
\begin{equation}
\widetilde{\theta}(\z_1,u,t)(x) = \mathrm{w}^*\mbox{-}\lim_{n \rightarrow \infty} \z_1^n \cdot \frac{D_{x_1} \chi_{E_t}}{|D_{x_1} \chi_{E_t}|} = \mathrm{w}^*\mbox{-}\lim_{n \rightarrow \infty} \z_1^n \cdot \frac{D_{x_1} \chi_{\{ T \circ u \geq t\}}}{|D_{x_1} \chi_{\{ T \circ u \geq t \}}|} = \widetilde{\theta}(\z_1,T(u),T(t))(x).
\end{equation}
Hence, by equation \eqref{eq:equalityofthetasforalmostallt}, for $\mathcal{L}^1$-almost all $t \in \mathbb{R}$
\begin{equation*}
\theta(\z_1, D_{x_1} u,x) = \widetilde{\theta}(\z_1,u,t)(x) = \widetilde{\theta}(\z_1,T(u),T(t))(x) = \theta(\z_1, D_{x_1}(T \circ u),x)
\end{equation*}
$|D_{x_1} \chi_{E_t}|$-a.e. in $\Omega$. By the anisotropic co-area formula (Theorem \ref{thm:anisotropiccoarea}), this equality also holds $|D_{x_1} u|$-a.e., which concludes the proof.
\end{proof}

Using an argument as in \cite[Proposition 2.7]{LS}, one can show that whenever the function $u$ satisfies the chain rule $D_{x_1} (f \circ u) = f'(u) D_{x_1} u$ for all Lipschitz functions $f: \R \rightarrow \R$, the result above extends to all nondecreasing Lipschitz functions $T: \R \rightarrow \R$ (with the desired property valid $|D_{x_1} (T \circ u)|$-a.e. in $\Omega$). 

We now move to the last topic concerning strongly anisotropic Anzellotti pairings, namely an anisotropic counterpart of the weak Gauss-Green formula given in Theorem \ref{thm:isotropicgreenformula}. We prove the result in a setting adapted to the operators we consider in Sections \ref{sec:dirichlet} and \ref{sec:elliptic}, i.e. for functions which have zero trace on $\Omega_1 \times \partial\Omega_2$. Such a result was first proved in a restricted setting adapted to equation \eqref{eq:introductionellipticproblem} in \cite[Theorem 2.7]{MRST}, but the proof contains a minor flaw. Namely, in the current notation, the authors use integration by parts separately in the $x_1$ and $x_2$ variables, but only integrability of $\mathrm{div}(\z)$ is assumed; therefore, we do not know if $\mathrm{div}_{x_1}(\z)$ and $\mathrm{div}_{x_2}(\z)$ are integrable, which is needed to apply the integration by parts. To rectify this issue, we prove the anticipated anisotropic Gauss-Green formula in Theorem \ref{thm:anisotropicgaussgreen} from scratch in the following way: we first prove existence of the weak normal trace of a vector field with integrable divergence in Theorem \ref{thm:definitionofweaktrace}, from which follows the Gauss-Green formula for Sobolev functions, and then use an approximation as in the anisotropic Meyers-Serrin theorem (Proposition \ref{prop:meyersserrin})  to conclude the proof in the general case. The heart of the proof lies in Proposition \ref{prop:bilinearform} and Theorem \ref{thm:definitionofweaktrace}, and a careful examination of the proofs shows that we only use the integrability assumption on the full divergence $\mathrm{div}(\z)$.

As a preparation, we now prove a slightly improved version of the Gagliardo extension theorem.

\begin{proposition}\label{prop:gagliardo}
Let $f \in L^1(\partial\Omega_1 \times \Omega_2)$. Then, for every $\varepsilon > 0$ there exists a function $u \in W_{0,w}^{1,(p_i)}(\Omega)$ such that $u = f$ on $\partial\Omega_1 \times \Omega_2$,
$$\mathrm{supp}(u) \subset \{ x \in \Omega: \, \mathrm{dist}(x, \partial\Omega_1 \times \Omega_2) \leq \varepsilon \},$$
and the following inequalities hold: on the first coordinate, we have
\begin{equation}\label{eq:extensionestimatepart1}
\int_\Omega |D_{x_1} u| \leq (1 + \varepsilon) \| f \|_{L^1(\partial\Omega_1 \times \Omega_2)};
\end{equation}
and for all $i = 2, ..., k$ we have
\begin{equation}\label{eq:extensionestimatepart2}
\int_\Omega |\nabla_{x_i} u|^{p_i} \leq \varepsilon \| f \|_{L^1(\partial\Omega_1 \times \Omega_2)}.
\end{equation}

\end{proposition}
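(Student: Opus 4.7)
The plan is a Gagliardo-type tubular construction: approximate $f$ by a telescoping sum of smooth data, extend each piece as a product of the smooth datum with a sharp cutoff in the inward normal direction, and choose the width of the tube for each piece as small as needed to kill tangential and transverse contributions. The main delicacy is keeping the constant in the bound on $\int_\Omega|D_{x_1}u|$ equal to $(1+\varepsilon)$ rather than some larger multiple of $\|f\|_{L^1}$. First, by a partition of unity on $\partial\Omega_1$ together with a bi-Lipschitz flattening (licit since $\partial\Omega_1$ inherits the Lipschitz regularity from $\partial\Omega$), reduce to the model setting where $\partial\Omega_1$ is locally the hyperplane $\{s=0\}$, so $x_1=(y,s)$ with $s$ the signed distance, and $f$ is compactly supported in the tangential variable $y$. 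The bi-Lipschitz Jacobians differ from the identity by a factor $1+O(\eta)$, which is made arbitrarily small by refining the cover.

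For the construction, take $f_k\in C_c^\infty(Q\times\Omega_2)$ with $\|f_k-f\|_{L^1}\leq 2^{-k}\delta\|f\|_{L^1}$ (where $\delta>0$ will be chosen small), set $f_0=0$, and write $f=\sum_{k\geq1}g_k$ with $g_k=f_k-f_{k-1}$, noting that $\sum_k\|g_k\|_{L^1}\leq(1+3\delta)\|f\|_{L^1}$. Fix a smooth monotone $\psi\colon[0,\infty)\to[0,1]$ with $\psi(0)=1$, $\mathrm{supp}(\psi)\subset[0,1]$, and $\int_0^1|\psi'|\,dt=1$. For each $k$, using that $g_k$ is smooth and compactly supported in $Q\times\Omega_2$, pick $h_k\in(0,2^{-k}\varepsilon)$ small enough that
\begin{equation*}
h_k\|\nabla_y g_k\|_{L^1}\leq 2^{-k}\varepsilon\|f\|_{L^1}\quad\text{and}\quad h_k^{1/p_i}\|\nabla_{x_i}g_k\|_{L^{p_i}}\leq 2^{-k}(\varepsilon\|f\|_{L^1})^{1/p_i}\quad\text{for every }i\geq 2,
\end{equation*}
and define $u_k(y,s,x_2)=g_k(y,x_2)\psi(s/h_k)$ and $u=\sum_{k\geq1}u_k$. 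The series converges in $W^{1,1}(\Omega)$ and in $L^{p_i}(\Omega;\R^{n_i})$ for each $\nabla_{x_i}$.

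A substitution $t=s/h_k$ gives $\int_\Omega|\partial_s u_k|\,dx=\|g_k\|_{L^1(\partial\Omega_1\times\Omega_2)}$ and $\int_\Omega|\nabla_y u_k|\,dx\leq h_k\|\nabla_y g_k\|_{L^1}\|\psi\|_{L^1}$, so
\begin{equation*}
\int_\Omega|D_{x_1}u|\leq\sum_k\|g_k\|_{L^1}+\sum_k h_k\|\nabla_y g_k\|_{L^1}\leq(1+3\delta)\|f\|_{L^1}+\varepsilon\|f\|_{L^1},
\end{equation*}
and taking $\delta$ small (and redefining $\varepsilon$) gives \eqref{eq:extensionestimatepart1}. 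Similarly $\int_\Omega|\nabla_{x_i}u_k|^{p_i}\,dx\leq h_k\|\nabla_{x_i}g_k\|_{L^{p_i}}^{p_i}\|\psi\|_{L^{p_i}}^{p_i}$; combining with the triangle inequality in $L^{p_i}$ and the choice of $h_k$ produces $\|\nabla_{x_i}u\|_{L^{p_i}}\leq\sum_k 2^{-k}(\varepsilon\|f\|_{L^1})^{1/p_i}\leq(\varepsilon\|f\|_{L^1})^{1/p_i}$, which proves \eqref{eq:extensionestimatepart2}. The support property follows from $\mathrm{supp}(u)\subset\{0\leq s\leq\varepsilon\}$, and the weak zero-trace condition from the fact that every partial sum $\sum_{k=1}^K u_k(y,s,\cdot)\in C_c^\infty(\Omega_2)\subset W_0^{1,(p_i)_{i=2}^k}(\Omega_2)$ and the membership persists in the limit. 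The main obstacle is the simultaneous balancing of constants in the first estimate: the leading $(1+\varepsilon)\|f\|_{L^1}$ bound forces $f_1$ to be chosen so that $\|f_1\|_{L^1}\leq(1+\delta)\|f\|_{L^1}$ \emph{before} the telescoping tail kicks in, after which all the remaining errors (tangential $x_1$-gradient, $x_i$-gradients for $i\geq2$, and volume term) are absorbed by choosing $h_k$ small as a function of the a priori unbounded Sobolev norms of the smooth $g_k$.
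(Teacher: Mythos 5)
Your proof is correct and follows essentially the same route as the paper's: reduction to a flat boundary, a telescoping sequence of smooth approximants of $f$, and an adaptive choice of shrinking layer widths so that the normal derivative contributes exactly $\sum_k\|g_k\|_{L^1}\leq(1+O(\delta))\|f\|_{L^1}$ while the tangential and $x_i$-gradient terms are absorbed into $\varepsilon$. The only (cosmetic) difference is that you multiply each telescoping increment by a normal cutoff $\psi(s/h_k)$ and sum, whereas the paper interpolates linearly between consecutive approximants on nested slabs $[t_{j+1},t_j]$; the resulting estimates are identical.
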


\begin{proof}
The result follows by modifying slightly the classical construction of the extension $u \in W^{1,1}(\Omega)$ of $f \in L^1(\partial\Omega)$ given by the Gagliardo extension theorem (see for instance \cite[Theorem 2.16]{Giu}). Since $\partial\Omega$ is Lipschitz and we only need to extend the boundary datum in a neighbourhood of $\partial\Omega_1 \times \Omega_2$, using an argument based on a partition of unity and a straightening of the boundary we can reduce the proof to the case when $\partial\Omega_1 \times \Omega_2 = \mathbb{R}^{N-1}$, $f$ has compact support in $\mathbb{R}^{N-1}$, and $u$ is a function defined in $\R^N_+ := \{ (y_1, ..., y_N): y_1 > 0 \}$. Note that we choose a slightly different notation for points: $(y_1, ..., y_N)$ has $N$ coordinates instead of $k$, $x_1$ corresponds to $(y_1, ..., y_{n_1})$, and for $i = 2,...,k$ the coordinates $x_i$ correspond to $(y_{n_1 + ... + n_{i-1} + 1}, ..., y_{n_1 + ... + n_i})$.

We first pick a sequence of smooth functions $f_j \in C_c(\R^{N-1})$ which converges to $f$ in $L^1(\R^{N-1})$ as $j \rightarrow \infty$. We can assume that $f_0 \equiv 0$ and
\begin{equation}\label{eq:finitesumfortheapproximation}
\sum_{j = 0}^\infty \| f_j - f_{j+1} \|_{L^1(\R^{N-1})} < \infty.
\end{equation}
Since $f_j$ have compact support, for every $j \in \mathbb{N} \cup \{ 0 \}$ we have that
\begin{equation}\label{eq:definitionofgj}
g_j := \sum_{l = 2}^{n_1} \int_{\R^N_+} \bigg( \bigg| \frac{\partial}{\partial y_l} f_j \bigg| + \bigg| \frac{\partial}{\partial y_l} f_{j+1} \bigg| \bigg) \, dx < \infty 
\end{equation}
and
\begin{equation}\label{eq:definitionofhj}
h_j := \max_{i = 2,...,k} \int_{\R^N_+} (|\nabla_{x_i} f_j|^{p_i} + |\nabla_{x_i} f_{j+1}|^{p_i}) \, dx < \infty. 
\end{equation}
Take a decreasing sequence $t_j$ converging to zero; we will fix the exact values of $t_j$ at the end of the proof. Denote the $y_1$ variable by $t$ and set
\begin{equation}
u(t,y') = \twopartdef{0}{\mbox{if } t > t_0;}{\frac{t - t_{j+1}}{t_j - t_{j+1}} f_j(y') + \frac{t_j - t}{t_j - t_{j+1}} f_{j+1}(y')}{\mbox{if } t \in [t_{j+1},t_{j}]}
\end{equation}
for $t > 0$ and $y' \in \mathbb{R}^{N-1}$. The argument in \cite{Giu} shows that the trace of $u$ is correct; we only need to prove the desired bounds \eqref{eq:extensionestimatepart1} and \eqref{eq:extensionestimatepart2}.

To this end, observe that for $t \in [t_{j+1}, t_j]$ we have the following pointwise bounds:
\begin{equation}
\bigg|\frac{\partial}{\partial t} u(t,y') \bigg| \leq |f_j(y') - f_{j+1}(y')| (t_j - t_{j+1})^{-1};
\end{equation}
for all $l = 2, ..., n_1$ we have
\begin{equation}
\bigg|\frac{\partial}{\partial y_l} u(t,y') \bigg| \leq \bigg|\frac{\partial}{\partial y_l} f_j(y') \bigg| + \bigg|\frac{\partial}{\partial y_l} f_{j+1}(y') \bigg|;
\end{equation}
and for all $i = 2, ..., k$ we have
\begin{equation}
| \nabla_{x_i} u(t,y') | \leq | \nabla_{x_i} f_j(y') | + | \nabla_{x_i} f_{j+1}(y') |.
\end{equation}
We will show that the desired estimates follow. To prove \eqref{eq:extensionestimatepart1}, observe that
\begin{align}
|\nabla_{x_1} u| \leq \bigg|\frac{\partial}{\partial t} u \bigg| + \sum_{l = 2}^{n_1} \bigg|\frac{\partial}{\partial y_l} u \bigg| \leq |f_j - f_{j+1}| (t_j - t_{j+1})^{-1} + \sum_{l = 2}^{n_1} \bigg( \bigg|\frac{\partial}{\partial y_l} f_j \bigg| + \bigg|\frac{\partial}{\partial y_l} f_{j+1} \bigg| \bigg),
\end{align}
and integrating this inequality over $\R^N_+$ we get
\begin{align}\label{eq:extensionestimateproof1}
\int_{\R^N_+} |\nabla_{x_1} u| \, dx \leq \|f_j - f_{j+1} \|_{L^1(\R^{N-1})} + (t_j - t_{j+1}) \, g_j,
\end{align}
where $g_j$ is given by \eqref{eq:definitionofgj}. Similarly, for all $i = 2, ..., k$ we have
\begin{equation}
|\nabla_{x_i} u|^{p_i} \leq (| \nabla_{x_i} f_j | + | \nabla_{x_i} f_{j+1} |)^{p_i} \leq 2^{p_i - 1} (| \nabla_{x_i} f_j|^{p_i} + | \nabla_{x_i} f_{j+1} |^{p_i}),
\end{equation}
and integrating this inequality over $\R^N_+$ we get
\begin{align}\label{eq:extensionestimateproof2}
\int_{\R^N_+} |\nabla_{x_i} u|^{p_i} \, dx \leq 2^{p_i - 1} \, (t_j - t_{j+1}) \, h_j,
\end{align}
where $h_j$ is given by \eqref{eq:definitionofhj}. Choosing the sequence $t_j$ in such a way that $t_0 < \varepsilon$ and
\begin{equation}
t_j - t_{j+1} \leq \frac{\| f \|_{L^1(\R^{N-1})}}{1 + g_j + 2^{p_k - 1} h_j} 2^{-j-2} \varepsilon,
\end{equation}
we obtain that the bound on the support holds and the estimates \eqref{eq:extensionestimatepart1} and \eqref{eq:extensionestimatepart2} follow from \eqref{eq:extensionestimateproof1} and \eqref{eq:extensionestimateproof2} respectively.
\end{proof}

We now prove that there exists a function $[\z_1, \nu_{x_1}]$ which has an interpretation of a weak normal trace of the vector field $\z \in X_1^{(p'_i)}(\Omega)$ on $\partial \Omega_1 \times \Omega_2$. To simplify the notation, we denote
$$ BV_{0,w}^{(p_i), \infty}(\Omega) = BV_{0,w}^{(p_i)}(\Omega) \cap L^\infty(\Omega).$$
The proof follows in two steps: in Proposition \ref{prop:bilinearform} we introduce an auxiliary pairing
$$ \langle \mathbf{z}, u \rangle_{\partial\Omega_1 \times \Omega_2}: X_1^{(p'_i)}(\Omega) \times BV_{0,w}^{(p_i), \infty}(\Omega) \rightarrow \mathbb{R}$$
and then in Theorem \ref{thm:definitionofweaktrace} we provide its integral representation, from which we deduce existence of a function in $L^\infty(\partial\Omega_1 \times \Omega_2)$ which has an interpretation of a weak normal trace of the vector field $\z$.

\begin{proposition}\label{prop:bilinearform}
There exists a bilinear map $\langle \mathbf{z}, u \rangle_{\partial\Omega_1 \times \Omega_2}: X_1^{(p'_i)}(\Omega) \times BV_{0,w}^{(p_i), \infty}(\Omega) \rightarrow \mathbb{R}$ such that
$$ \langle \mathbf{z}, u \rangle_{\partial\Omega_1 \times \Omega_2} = \int_{\partial\Omega_1 \times \Omega_2} u \, \mathbf{z} \cdot \nu^\Omega \, d\mathcal{H}^{N-1} \qquad \mbox{ if } \mathbf{z} \in C^1(\overline{\Omega}; \mathbb{R}^N),$$
where $\nu^\Omega$ denotes the outer unit normal to $\Omega$, and
$$ |\langle \mathbf{z}, u \rangle_{\partial\Omega_1 \times \Omega_2}| \leq \| \z_1 \|_{\infty} \cdot \| u \|_{L^1(\partial\Omega_1 \times \Omega_2)}.$$
\end{proposition}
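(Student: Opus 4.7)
The definition I propose is
$$\langle \mathbf{z}, u \rangle_{\partial\Omega_1 \times \Omega_2} := \int_\Omega u \, \mathrm{div}(\mathbf{z}) \, dx + \int_\Omega (\mathbf{z}, Du),$$
where the first integral is finite ($u \in L^\infty$ and $\mathrm{div}(\mathbf{z}) \in L^1$) and the second is the total mass of the finite Radon measure constructed in Proposition \ref{prop:propertiesofanzellottipairing} (applied with $q = \infty$, $q' = 1$ in condition \eqref{eq:conditionforuz}). Bilinearity in $\mathbf{z}$ and $u$ is immediate from the construction of $(\mathbf{z}, Du)$ in Definition \ref{dfn:anzellottipairing}.

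To verify the surface-integral representation when $\mathbf{z} \in C^1(\overline\Omega; \R^N)$, I would apply the anisotropic Meyers--Serrin theorem (Proposition \ref{prop:meyersserrin}) to obtain smooth approximations $u_n$ of $u$ with $u_n|_{\partial\Omega_1 \times \Omega_2} = u|_{\partial\Omega_1 \times \Omega_2}$. Since for smooth $u_n$ one has $(\mathbf{z}, Du_n) = \mathbf{z} \cdot \nabla u_n \, dx$, the classical divergence theorem (together with the weak zero-trace of $u_n$ on $\Omega_1 \times \partial\Omega_2$ inherited from $u$) gives
$$\langle \mathbf{z}, u_n\rangle = \int_{\partial\Omega_1 \times \Omega_2} u_n \, \mathbf{z} \cdot \nu^\Omega \, d\mathcal{H}^{N-1}.$$
Passing to the limit in $n$ using the convergences in Proposition \ref{prop:meyersserrin} and the fact that $\nu^\Omega = (\nu^{\Omega_1}, 0, \ldots, 0)$ on $\partial\Omega_1 \times \Omega_2$ yields both the identity and the desired bound in the smooth case.

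To obtain the bound for a general $\mathbf{z} \in X_1^{(p'_i)}(\Omega)$, I would reduce to a neighbourhood of $\partial\Omega_1 \times \Omega_2$ via the Gagliardo extension. Letting $f$ be the trace of $u$ on $\partial\Omega_1 \times \Omega_2$ and letting $w_\varepsilon \in W_{0,w}^{1,(p_i)}(\Omega)$ be the extension from Proposition \ref{prop:gagliardo}, write $u = w_\varepsilon + (u - w_\varepsilon)$; note that $u - w_\varepsilon \in BV_0^{(p_i),\infty}(\Omega)$ has zero trace on all of $\partial\Omega$. The central auxiliary claim is
$$\langle \mathbf{z}, v \rangle_{\partial\Omega_1 \times \Omega_2} = 0 \qquad \forall\, v \in BV_0^{(p_i),\infty}(\Omega).$$
Granting the claim, bilinearity gives $\langle \mathbf{z}, u \rangle = \langle \mathbf{z}, w_\varepsilon \rangle$. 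The term $\int w_\varepsilon \, \mathrm{div}(\mathbf{z}) \, dx$ is bounded by $\|u\|_\infty \int_{\{\mathrm{dist}(\cdot, \partial\Omega_1 \times \Omega_2) \leq \varepsilon\}} |\mathrm{div}(\mathbf{z})| \, dx$, which tends to $0$ as $\varepsilon \to 0$ by absolute continuity of the Lebesgue integral, while $\int (\mathbf{z}, Dw_\varepsilon)$ is estimated by Proposition \ref{prop:propertiesofanzellottipairing} combined with the Gagliardo estimates \eqref{eq:extensionestimatepart1}--\eqref{eq:extensionestimatepart2}, producing $(1+\varepsilon)\|\mathbf{z}_1\|_\infty \|f\|_{L^1(\partial\Omega_1 \times \Omega_2)} + o(1)$. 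Sending $\varepsilon \to 0$ then yields the sought-after bound.

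The main obstacle is the auxiliary claim for $v \in BV_0^{(p_i),\infty}(\Omega)$. My plan is to approximate $v$ by $v_n \in C_c^\infty(\Omega)$ obtained by extending $v$ by zero outside $\Omega$ and mollifying, together with a truncation ensuring $\|v_n\|_\infty \leq \|v\|_\infty$; for such $v_n$, $\langle \mathbf{z}, v_n \rangle = 0$ is an immediate consequence of the distributional definition of $\mathrm{div}(\mathbf{z})$. The limit passage uses dominated convergence for $\int v_n \, \mathrm{div}(\mathbf{z})\,dx$ (using the uniform $L^\infty$-bound and $\mathrm{div}(\mathbf{z}) \in L^1$), while for the second term one needs $\int (\mathbf{z}, Dv_n) \to \int (\mathbf{z}, Dv)$, which is an anisotropic continuity property for the pairing under an appropriate strict-type convergence. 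Establishing this continuity while controlling the $BV$-component in the $x_1$ direction without any integrability on $\mathrm{div}(\mathbf{z})$ beyond $L^1$ is the most delicate point of the argument.
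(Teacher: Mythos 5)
Your overall architecture matches the paper's: the key quantitative step (substituting the Gagliardo-type extension of Proposition \ref{prop:gagliardo} to localise near $\partial\Omega_1\times\Omega_2$ and letting $\varepsilon\to 0$) is exactly the paper's argument. The genuine difference, and the source of a real gap, is your decision to define $\langle \mathbf{z},u\rangle_{\partial\Omega_1\times\Omega_2}$ intrinsically as $\int_\Omega u\,\mathrm{div}(\mathbf{z})\,dx+\int_\Omega(\mathbf{z},Du)$ for \emph{every} $u\in BV_{0,w}^{(p_i),\infty}(\Omega)$. This forces you to prove the auxiliary claim $\int_\Omega v\,\mathrm{div}(\mathbf{z})\,dx+\int_\Omega(\mathbf{z},Dv)=0$ for all $v\in BV_0^{(p_i)}(\Omega)\cap L^\infty(\Omega)$, which is a zero-trace Gauss--Green identity for genuinely BV functions and for fields with only $\mathrm{div}(\mathbf{z})\in L^1(\Omega)$. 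You correctly flag this as the crux but do not prove it, and your sketch does not close it: (i) extending by zero and mollifying produces functions supported in a neighbourhood of $\overline{\Omega}$, not compactly inside $\Omega$, so producing $v_n\in C_c^\infty(\Omega)$ with $\int_\Omega|\nabla_{x_1}v_n|\to\int_\Omega|D_{x_1}v|$ and $\nabla_{x_i}v_n\to\nabla_{x_i}v$ in $L^{p_i}$ requires an additional inward-pushing construction (local translations adapted to the Lipschitz boundary plus a partition of unity) and a check that the zero extension creates no extra variation on $\partial\Omega$; (ii) the continuity $\int_\Omega(\mathbf{z},Dv_n)\to\int_\Omega(\mathbf{z},Dv)$ under this strict-type convergence is exactly the content of Lemma \ref{lem:approximationofpairing}, which is proved only for the particular sequence of Proposition \ref{prop:meyersserrin}; its proof would adapt, but this has to be carried out, not asserted.

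The paper sidesteps all of this by a different order of operations: it defines the functional by formula \eqref{eq:definitionofbilinearform} only for $u\in BV_{0,w}^{(p_i),\infty}(\Omega)\cap W^{1,1}(\Omega)$, where the second term is a genuine Lebesgue integral; it proves the invariance property \eqref{eq:equalityforthesametrace}, which only needs the zero-trace identity for the Sobolev difference $u-v$, for which compactly supported smooth approximations converging in all the relevant norms (including $\nabla_{x_1}$ in $L^1$) are elementary; and it then extends to all of $BV_{0,w}^{(p_i),\infty}(\Omega)$ by choosing any Sobolev representative with the same trace (available by Proposition \ref{prop:meyersserrin}). If you keep your intrinsic definition, you must in any case show it agrees with this one, which is again your auxiliary claim, so the missing approximation lemma cannot be avoided. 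Two smaller points: the bound $|\int_\Omega w_\varepsilon\,\mathrm{div}(\mathbf{z})\,dx|\leq\|u\|_\infty\int_{\{x:\,\mathrm{dist}(x,\partial\Omega_1\times\Omega_2)\leq\varepsilon\}}|\mathrm{div}(\mathbf{z})|\,dx$ needs $\|w_\varepsilon\|_\infty\leq\|u\|_\infty$, which the Gagliardo construction does not give for free (truncate the extension); and in your verification of the $C^1$ case the limit $\int_\Omega\mathbf{z}\cdot\nabla u_n\,dx\to\int_\Omega(\mathbf{z},Du)$ again uses continuity of the pairing under strict convergence and should be justified.
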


\begin{proof}
For all $\mathbf{z} \in X_1^{(p'_i)}(\Omega)$ and $u \in BV_{0,w}^{(p_i), \infty}(\Omega) \cap W^{1,1}(\Omega)$ we set
\begin{equation}\label{eq:definitionofbilinearform}
\langle \mathbf{z}, u \rangle_{\partial\Omega_1 \times \Omega_2} = \int_\Omega u \, \mathrm{div}(\mathbf{z}) \, dx + \int_\Omega \mathbf{z} \cdot \nabla u \, dx.    
\end{equation}
Since we assume in the definition of $BV_{0,w}^{(p_i)}(\Omega)$ that the trace of $u$ on $\Omega_1 \times \partial\Omega_2$ equals zero, by Theorem \ref{thm:standardtracetheorem} the first property holds. Moreover, this map is bilinear. 

In the general case, due to the fact that $D_{x_1} u$ may be only a measure, the formula above is not well-defined; we will extend it by approximating general $u \in BV_{0,w}^{(p_i), \infty}(\Omega)$ using smooth functions. To this end, we notice that if $u, v \in BV_{0,w}^{(p_i), \infty}(\Omega) \cap W^{1,1}(\Omega)$ have the same trace, then
\begin{equation}\label{eq:equalityforthesametrace}
\langle \mathbf{z}, u \rangle_{\partial\Omega_1 \times \Omega_2} = \langle \mathbf{z}, v \rangle_{\partial\Omega_1 \times \Omega_2}.
\end{equation}
To prove this, consider an approximation $g_j$ of the function $u - v$ by smooth functions given by the anisotropic Meyers-Serrin theorem (Proposition \ref{prop:meyersserrin}). Since $u-v$ has trace zero, with a minor modification of the proof we can require that $u-v$ has compact support in $\Omega$. Then,
\begin{align}
\langle \mathbf{z}, u - v \rangle_{\partial\Omega_1 \times \Omega_2} &= \int_\Omega (u - v) \, \mathrm{div}(\mathbf{z}) \, dx + \int_\Omega \mathbf{z} \cdot \nabla(u - v) \, dx \\
&= \lim_{j \rightarrow \infty} \bigg( \int_\Omega g_j \, \mathrm{div}(\mathbf{z}) \, dx + \int_\Omega \mathbf{z} \cdot \nabla g_j \, dx\bigg) = 0,
\end{align}
where the last inequality follows from the distributional definition of the divergence; this concludes the proof of property \eqref{eq:equalityforthesametrace}. Since by the anisotropic Meyers-Serrin theorem (Proposition \ref{prop:meyersserrin}) for every $u \in BV_{0,w}^{(p_i)}(\Omega)$ there exists a smooth function with the same trace, for arbitrary $u \in BV_{0,w}^{(p_i), \infty}(\Omega)$ we define $\langle \mathbf{z}, u \rangle_{\partial\Omega_1 \times \Omega_2}$ by
$$ \langle \mathbf{z}, u \rangle_{\partial\Omega_1 \times \Omega_2} = \langle \mathbf{z}, v \rangle_{\partial\Omega_1 \times \Omega_2},$$
where $v$ is any function in $BV_{0,w}^{(p_i), \infty}(\Omega) \cap W^{1,1}(\Omega)$ with the same trace as $u$. In view of equation \eqref{eq:equalityforthesametrace}, this uniquely defines $\langle \mathbf{z}, u \rangle_{\partial\Omega_1 \times \Omega_2}$ for any $u \in BV_{0,w}^{(p_i), \infty}(\Omega)$.

Now, we have to prove the second property. Let us take a sequence $u_j \in BV_{0,w}^{(p_i), \infty}(\Omega) \cap C^\infty(\Omega)$ which converges to $u$ as in the anisotropic Meyers-Serrin theorem (Proposition \ref{prop:meyersserrin}). Then, we get that
\begin{align}
|\langle \mathbf{z}, u \rangle_{\partial\Omega_1 \times \Omega_2}| = |\langle \mathbf{z}, u_j \rangle_{\partial\Omega_1 \times \Omega_2}| &= \bigg| \int_\Omega u_j \, \mathrm{div}(\mathbf{z}) \, dx + \int_\Omega \mathbf{z} \cdot \nabla u_j \, dx \bigg| \\
&\leq \bigg| \int_\Omega u_j \, \mathrm{div}(\mathbf{z}) \, dx \bigg| +  \sum_{i = 1}^k \| \z_i \|_{p'_i}  \| \nabla_{x_i} u_j \|_{p_i}.
\end{align}
We pass to the limit with $j \rightarrow \infty$ and obtain
\begin{equation}\label{eq:estimateforpairingzu}
|\langle \mathbf{z}, u \rangle_{\partial\Omega_1 \times \Omega_2}| \leq \bigg| \int_\Omega u \, \mathrm{div}(\mathbf{z}) \, dx \bigg| + \| \z_1 \|_\infty \int_\Omega |D_{x_1} u| + \sum_{i = 2}^k \| \z_i \|_{p'_i}  \| \nabla_{x_i} u \|_{p_i}.    
\end{equation}
Fix $\varepsilon > 0$. Observe that by property \eqref{eq:equalityforthesametrace}, we may take $u$ to be the function given by the variant of the Gagliardo extension theorem proved in Proposition \ref{prop:gagliardo}; therefore,
$$ \int_\Omega |\nabla_{x_1} u| \, dx \leq (1 + \varepsilon) \| u \|_{L^1(\partial\Omega_1 \times \Omega_2)},$$
for $i = 2, ..., k$ we have
$$ \int_\Omega |\nabla_{x_i} u|^{p_i} \, dx \leq \varepsilon \| u \|_{L^1(\partial\Omega_1 \times \Omega_2)},$$
and $u$ is supported in $\Omega \backslash \Omega_\varepsilon$, where
$$ \Omega_\varepsilon = \{ x \in \Omega: \, \mathrm{dist}(x, \partial\Omega_1 \times \Omega_2) > \varepsilon \}.$$
We insert it in the estimate \eqref{eq:estimateforpairingzu} and obtain
\begin{align}
|\langle \mathbf{z}, u \rangle_{\partial\Omega_1 \times \Omega_2}| \leq \| u \|_\infty \bigg| \int_{\Omega \backslash \Omega_\varepsilon} \mathrm{div}(\mathbf{z}) \, dx \bigg| + (1 + \varepsilon) &\| \z_1 \|_{\infty} \| u \|_{L^1(\partial\Omega_1 \times \Omega_2)} \\
&+ \sum_{i=2}^k \varepsilon^{1/p_i} \, \| \z_i \|_{p'_i} \, \| u \|_{L^1(\partial\Omega_1 \times \Omega_2)}^{1/p_i}.    
\end{align}
Since $\varepsilon$ was arbitrary, we pass to the limit $\varepsilon \rightarrow 0$ and obtain
$$ |\langle \mathbf{z}, u \rangle_{\partial\Omega_1 \times \Omega_2}| \leq \| \mathbf{z_1} \|_{\infty} \| u \|_{L^1(\partial\Omega_1 \times \Omega_2)}, $$
which concludes the proof.
\end{proof}

Now, we provide an integral representation of the bilinear map $\langle \mathbf{z}, u \rangle_{\partial\Omega_1 \times \Omega_2}$, from which follows that for every vector field $\z \in X_1^{(p'_i)}$ there exists a function in $L^\infty(\partial\Omega_1 \times \Omega_2)$ which has an interpretation of its normal trace.

\begin{theorem}\label{thm:definitionofweaktrace}
There exists a linear operator $\gamma: X_1^{(p'_i)}(\Omega) \rightarrow L^\infty(\partial\Omega_1 \times \Omega_2)$ such that
$$ \| \gamma(\mathbf{z}) \|_{L^\infty(\partial\Omega_1 \times \Omega_2)} \leq \| \z_1 \|_{\infty},$$
we have the following integral representation: for all $u \in BV_{0,w}^{(p_i), \infty}(\Omega)$ 
\begin{equation}\label{eq:integralrepresentation}
\langle \mathbf{z}, u \rangle_{\partial\Omega_1 \times \Omega_2} = \int_{\partial\Omega_1 \times \Omega_2} u \, \gamma(\mathbf{z}) \, d\mathcal{H}^{N-1},    
\end{equation}
and
$$ \gamma(\mathbf{z}) (x) = \mathbf{z} \cdot \nu^\Omega \qquad \mbox{ if } \mathbf{z} \in C^1(\overline{\Omega}; \mathbb{R}^N).$$
The function $\gamma(\mathbf{z})$ is a weakly defined normal trace of $\mathbf{z}$ on $\partial\Omega_1 \times \Omega_2$; for this reason, we will denote it by $[\z_1, \nu_{x_1}]$.
\end{theorem}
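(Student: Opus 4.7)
The plan is to identify the bilinear map $\langle \mathbf{z}, \cdot \rangle_{\partial\Omega_1 \times \Omega_2}$ constructed in Proposition \ref{prop:bilinearform} with a bounded linear functional on $L^1(\partial\Omega_1 \times \Omega_2)$ and then invoke the Riesz representation theorem under the duality $L^1$--$L^\infty$ to produce $\gamma(\mathbf{z})$.

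First, I fix $\mathbf{z} \in X_1^{(p'_i)}(\Omega)$ and record two outputs of the previous proof: the pairing $\langle \mathbf{z}, u \rangle_{\partial\Omega_1 \times \Omega_2}$ depends only on the trace of $u$ on $\partial\Omega_1 \times \Omega_2$ (identity \eqref{eq:equalityforthesametrace}), and it satisfies the estimate $|\langle \mathbf{z}, u\rangle_{\partial\Omega_1 \times \Omega_2}| \leq \| \mathbf{z}_1 \|_\infty \| u \|_{L^1(\partial\Omega_1 \times \Omega_2)}$. Given $f \in L^\infty(\partial\Omega_1 \times \Omega_2)$, Proposition \ref{prop:gagliardo} (applied to $f$, which is automatically in $L^1$ since $\partial\Omega_1 \times \Omega_2$ has finite measure) delivers an extension $u_f \in W_{0,w}^{1,(p_i)}(\Omega)$ with trace $f$; after a standard truncation it can be taken bounded, hence $u_f \in BV_{0,w}^{(p_i), \infty}(\Omega)$. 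I then define $\Phi_{\mathbf{z}}(f) := \langle \mathbf{z}, u_f \rangle_{\partial\Omega_1 \times \Omega_2}$, which is independent of the choice of extension and satisfies $|\Phi_{\mathbf{z}}(f)| \leq \|\mathbf{z}_1\|_\infty \|f\|_{L^1(\partial\Omega_1 \times \Omega_2)}$.

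Second, since bounded functions are dense in $L^1(\partial\Omega_1 \times \Omega_2)$, the map $\Phi_\mathbf{z}$ extends by continuity to a bounded linear functional on $L^1(\partial\Omega_1 \times \Omega_2)$ of norm at most $\|\mathbf{z}_1\|_\infty$. The Riesz representation theorem then yields a unique $\gamma(\mathbf{z}) \in L^\infty(\partial\Omega_1 \times \Omega_2)$ with $\|\gamma(\mathbf{z})\|_{L^\infty(\partial\Omega_1 \times \Omega_2)} \leq \|\mathbf{z}_1\|_\infty$ such that
\begin{equation*}
\Phi_{\mathbf{z}}(f) = \int_{\partial\Omega_1 \times \Omega_2} f \, \gamma(\mathbf{z}) \, d\mathcal{H}^{N-1} \qquad \text{for all } f \in L^1(\partial\Omega_1 \times \Omega_2).
\end{equation*}
Specialising to $f = u|_{\partial\Omega_1 \times \Omega_2}$ with $u \in BV_{0,w}^{(p_i), \infty}(\Omega)$ gives the desired integral representation \eqref{eq:integralrepresentation}. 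Linearity of the assignment $\mathbf{z} \mapsto \gamma(\mathbf{z})$ follows from the bilinearity of $\langle \cdot, \cdot \rangle_{\partial\Omega_1 \times \Omega_2}$ together with the uniqueness of the representing function.

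Finally, if $\mathbf{z} \in C^1(\overline{\Omega}; \mathbb{R}^N)$, Proposition \ref{prop:bilinearform} provides the classical formula $\langle \mathbf{z}, u \rangle_{\partial\Omega_1 \times \Omega_2} = \int_{\partial\Omega_1 \times \Omega_2} u \, \mathbf{z} \cdot \nu^\Omega \, d\mathcal{H}^{N-1}$, so by uniqueness in the Riesz representation $\gamma(\mathbf{z}) = \mathbf{z} \cdot \nu^\Omega$ almost everywhere on $\partial\Omega_1 \times \Omega_2$, which justifies the notation $[\mathbf{z}_1, \nu_{x_1}]$. The only genuinely delicate point is verifying that the Gagliardo-type extension of Proposition \ref{prop:gagliardo} can be chosen in $L^\infty$ when $f$ is bounded; a direct inspection of the piecewise-linear interpolation between smooth approximations used there (or an additional truncation step) confirms this, so the argument goes through without further work.
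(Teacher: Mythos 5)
Your proposal is correct and follows essentially the same route as the paper: view the pairing from Proposition \ref{prop:bilinearform} as a functional on boundary traces, bound it by $\| \z_1 \|_\infty \| \cdot \|_{L^1(\partial\Omega_1 \times \Omega_2)}$, extend by density to $L^1(\partial\Omega_1 \times \Omega_2)$, and apply the $L^1$--$L^\infty$ duality. You are in fact slightly more careful than the paper on one implicit point, namely that every $f \in L^\infty(\partial\Omega_1 \times \Omega_2)$ admits a \emph{bounded} extension in $BV_{0,w}^{(p_i)}(\Omega)$ (Gagliardo extension plus truncation at $\pm\|f\|_\infty$, which preserves the trace), so that $G(f)$ is well defined.
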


\begin{proof}
Given $\mathbf{z} \in X_1^{(p'_i)}(\Omega)$, consider the linear functional $G: L^\infty(\partial\Omega_1 \times \Omega_2) \rightarrow \mathbb{R}$ defined by the formula
$$ G(f) = \langle \mathbf{z}, u \rangle_{\partial\Omega_1 \times \Omega_2},$$
where $u \in BV_{0,w}^{(p_i), \infty}(\Omega)$ is such that $u = f$ on $\partial\Omega_1 \times \Omega_2$. By Proposition \ref{prop:bilinearform}, we have
$$ |G(f)| = |\langle \mathbf{z}, u \rangle_{\partial\Omega_1 \times \Omega_2}| \leq \| \mathbf{z}_1 \|_{\infty} \| f \|_{L^1(\partial\Omega_1 \times \Omega_2)}.$$
Since $G$ is a continuous functional on (a dense subset of) $L^1(\partial\Omega_1 \times \Omega_2)$, there exists a function $\gamma(\mathbf{z}) \in L^\infty(\partial\Omega_1 \times \Omega_2)$ with norm at most equal to $\| \z_1 \|_{\infty}$ such that
$$ G(f) = \int_{\partial\Omega_1 \times \Omega_2} f \, \gamma(\mathbf{z}) \, d\mathcal{H}^{N-1},$$
which concludes the proof.
\end{proof}

\begin{corollary}\label{cor:greenformulaforsobolevfunctions}
For all $\z \in X_{q'}^{(p'_i)}(\Omega)$ and $u \in W_{0,w}^{1,(p_i)}(\Omega) \cap L^q(\Omega)$ we have
\begin{equation}
\int_{\Omega} u \, \mathrm{div}(\z) \, dx + \int_\Omega \z \cdot \nabla u \, dx = \int_{\partial\Omega_1 \times \Omega_2} u \, [\z_1, \nu_{x_1}] \,d\mathcal{H}^{N-1}.
\end{equation}
\end{corollary}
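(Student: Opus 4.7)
The plan is to reduce the claim to the bounded case already covered by Proposition \ref{prop:bilinearform} and Theorem \ref{thm:definitionofweaktrace}, and then remove the boundedness assumption by truncation.

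First, observe that $W_{0,w}^{1,(p_i)}(\Omega) \cap L^\infty(\Omega) \subset BV_{0,w}^{(p_i),\infty}(\Omega) \cap W^{1,1}(\Omega)$, since the distributional derivatives $\nabla_{x_i}u$ lie in the required $L^{p_i}$ spaces and $\nabla_{x_1}u \in L^1(\Omega)$ identifies $D_{x_1}u$ with a measure absolutely continuous with respect to Lebesgue measure. Hence for any bounded $u$ in our space, the defining formula \eqref{eq:definitionofbilinearform} of the bilinear pairing $\langle \mathbf{z}, u\rangle_{\partial\Omega_1\times\Omega_2}$ applies, and combined with the integral representation \eqref{eq:integralrepresentation} from Theorem \ref{thm:definitionofweaktrace} it yields exactly the desired Gauss--Green identity.

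To handle a general $u \in W_{0,w}^{1,(p_i)}(\Omega) \cap L^q(\Omega)$, I would consider the truncations $u_n := T_{-n,n}(u)$. Each $u_n$ lies in the weak zero-trace Sobolev space and is bounded, so by Step~1 one has
\begin{equation*}
\int_{\Omega} u_n \, \mathrm{div}(\z) \, dx + \int_\Omega \z \cdot \nabla u_n \, dx = \int_{\partial\Omega_1 \times \Omega_2} u_n \, [\z_1, \nu_{x_1}] \,d\mathcal{H}^{N-1}.
\end{equation*}
Now I would pass to the limit in each term. For the first, $u_n \to u$ pointwise with $|u_n|\le |u|\in L^q(\Omega)$, and $\mathrm{div}(\z)\in L^{q'}(\Omega)$, so dominated convergence applies. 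For the second, I decompose $\int_\Omega \z\cdot\nabla u_n\,dx = \sum_{i=1}^k \int_\Omega \z_i\cdot \nabla_{x_i} u_n\,dx$; since $\nabla_{x_i}u_n = \chi_{\{|u|<n\}}\nabla_{x_i}u$ a.e., dominated convergence with $\z_1\in L^\infty$, $\nabla_{x_1}u\in L^1$, and $\z_i\in L^{p'_i}$, $\nabla_{x_i}u\in L^{p_i}$ for $i\ge 2$ gives convergence to $\int_\Omega \z\cdot\nabla u\,dx$.

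The boundary term is handled similarly: since $u \in W_{0,w}^{1,(p_i)}(\Omega) \subset W^{1,1}(\Omega)$, its trace on $\partial\Omega_1\times\Omega_2$ belongs to $L^1(\partial\Omega_1\times\Omega_2)$, and the trace of $u_n$ equals $T_{-n,n}$ applied to the trace of $u$. Hence the traces converge pointwise $\mathcal{H}^{N-1}$-a.e. and are dominated by $|u|$ on the boundary; since $[\z_1,\nu_{x_1}]\in L^\infty(\partial\Omega_1\times\Omega_2)$ by Theorem \ref{thm:definitionofweaktrace}, dominated convergence delivers the limit of the right-hand side. The only step that requires any real care is matching the integrability of $\z$ and $\nabla u$ coordinatewise in the second term, but this is built into the definition of $X_{q'}^{(p'_i)}(\Omega)$ and $W_{0,w}^{1,(p_i)}(\Omega)$, so no new estimate is needed.
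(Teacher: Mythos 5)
Your proof is correct and follows essentially the same route as the paper: reduce to bounded functions in $W_{0,w}^{1,(p_i)}(\Omega)$, where the identity is immediate from the defining formula \eqref{eq:definitionofbilinearform} together with the integral representation \eqref{eq:integralrepresentation}, and then pass to the limit term by term. The only difference is that you truncate $u$ directly rather than truncating a Meyers--Serrin approximation as the paper does, which is a harmless (and arguably cleaner) simplification since the smoothness of the approximants plays no role in the paper's argument either.
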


\begin{proof}
Since $\z \in X_{q'}^{(p'_i)}(\Omega)$, we also have that $\z \in X_{1}^{(p'_i)}(\Omega)$ and in particular the weak normal trace $[\z_1, \nu_{x_1}]$ is well-defined. Take a sequence $u_j$ which approximates $u$ as in the anisotropic Meyers-Serrin theorem (Proposition \ref{prop:meyersserrin}). By considering truncations, we can assume that $u_n$ is bounded; then, the functions may be no longer smooth or satisfy the trace constraint, but we have $u_j \in W_{0,w}^{1,(p_i)}(\Omega) \cap L^\infty(\Omega)$ and the sequence still satisfies
$$ u_j \rightarrow u \quad \mbox{in } L^{q}(\Omega)$$ 
and
$$\nabla_{x_i} u_j \rightarrow \nabla_{x_i} u \quad \mbox{in } L^{p_i}(\Omega; \mathbb{R}^{n_i})$$ 
for all $i = 1, ..., k$ (for $i = 1$ this follows from the assumption that $u \in W^{1,(p_i)}(\Omega)$). Then, by the definition of the bilinear form $\langle \z, u \rangle_{\partial\Omega_1 \times \Omega_2}$ given in equation \eqref{eq:definitionofbilinearform} and the integral representation \eqref{eq:integralrepresentation}, we have that
\begin{equation}
\int_{\Omega} u_j \, \mathrm{div}(\z) \, dx + \int_\Omega \z \cdot \nabla u_j \, dx = \int_{\partial\Omega_1 \times \Omega_2} u_j \, [\z_1, \nu_{x_1}] \,d\mathcal{H}^{N-1}.
\end{equation}
Passing to the limit $j \rightarrow \infty$, we obtain the desired result: convergence on the left-hand side follows from our assumption on the sequence $u_j$ and on the right-hand side from the fact that the trace operator is continuous with respect to convergence in $W^{1,1}(\Omega)$.
\end{proof}

Before we prove the anisotropic Gauss-Green formula, we need one additional technical result concerning the pairing $(\z_1, D_{x_1} u)$.

\begin{lemma}\label{lem:approximationofpairing}
Assume that the pair $(u,\z)$ satisfies condition \eqref{eq:conditionforuz}. Let $u_j \in C^\infty(\Omega) \cap BV^{(p_i)}(\Omega)$ converge to $u \in BV^{(p_i)}(\Omega)$ as in the anisotropic Meyers-Serrin theorem (Proposition \ref{prop:meyersserrin}). Then, we have
$$ \int_\Omega (\z_1, D_{x_1} u_j) \rightarrow \int_\Omega (\z_1, D_{x_1} u).$$
\end{lemma}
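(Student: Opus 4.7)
The plan is to reduce the statement to convergence of standard integrals against the smooth approximants, then control the boundary concentration of $|\nabla_{x_1} u_j|$ through a cutoff combined with the strict convergence supplied by Proposition~\ref{prop:meyersserrin}.

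As a first step, I would note that for smooth $u_j$ the computation in the proof of Proposition~\ref{prop:propertiesofanzellottipairing} shows that $(\z_1, D_{x_1} u_j)$ is the absolutely continuous measure with density $\z_1 \cdot \nabla_{x_1} u_j$, so
$$\int_\Omega (\z_1, D_{x_1} u_j) = \int_\Omega \z_1 \cdot \nabla_{x_1} u_j \, dx.$$
It therefore suffices to prove that this standard integral converges to $\int_\Omega (\z_1, D_{x_1} u)$.

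Fix $\varepsilon > 0$. Since $|D_{x_1} u|$ is a finite Radon measure on $\Omega$, choose $\eta \in C_c^\infty(\Omega)$ with $0 \le \eta \le 1$ and $\int_\Omega (1-\eta) \, d|D_{x_1} u| < \varepsilon$. Splitting via $1 = \eta + (1-\eta)$ gives
\begin{align*}
\int_\Omega (\z_1, D_{x_1} u_j) - \int_\Omega (\z_1, D_{x_1} u) &= \bigl[\langle (\z_1, D_{x_1} u_j), \eta \rangle - \langle (\z_1, D_{x_1} u), \eta \rangle\bigr] \\
&\quad + \int_\Omega (1-\eta)\, d(\z_1, D_{x_1} u_j) - \int_\Omega (1-\eta)\, d(\z_1, D_{x_1} u).
\end{align*}
The bracketed difference tends to zero as $j \to \infty$: expanding via Definition~\ref{dfn:anzellottipairing} it equals $-\int_\Omega (u_j - u)\, \mathrm{div}(\eta \z)\, dx - \sum_{i=2}^k \int_\Omega \eta \, \z_i \cdot \nabla_{x_i}(u_j - u)\, dx$, and each term vanishes by H\"older combined with Proposition~\ref{prop:meyersserrin} (using $\mathrm{div}(\eta \z) \in L^{(\max(p_k,q))'}$ on the bounded domain $\Omega$ together with $u_j \to u$ in $L^{\max(p_k,q)}$, and $\eta \z_i \in L^{p'_i}$ paired with the $L^{p_i}$-convergence of $\nabla_{x_i} u_j$).

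For the remaining error terms, Proposition~\ref{prop:propertiesofanzellottipairing} gives $\bigl|\int_\Omega (1-\eta)\, d(\z_1, D_{x_1} u)\bigr| \le \| \z_1 \|_\infty \varepsilon$ by the choice of $\eta$, and $\bigl|\int_\Omega (1-\eta)\, d(\z_1, D_{x_1} u_j)\bigr| \le \| \z_1 \|_\infty \int_\Omega (1-\eta) |\nabla_{x_1} u_j|\, dx$. The main obstacle is this last quantity, since strict convergence of the total variations alone does not preclude $|\nabla_{x_1} u_j|$ from concentrating on the shell $\Omega \setminus \mathrm{supp}(\eta)$. To rule this out I would apply the sup characterisation \eqref{dfn:tvonx1} to test fields $\psi \in C_c^1(\Omega; \R^{n_1})$ with $|\psi| \le \eta$; passing to the limit using $u_j \to u$ in $L^1(\Omega)$ yields the weak lower semicontinuity $\int_\Omega \eta\, d|D_{x_1} u| \le \liminf_j \int_\Omega \eta |\nabla_{x_1} u_j|\, dx$. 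Subtracting this from the strict convergence $\int_\Omega |\nabla_{x_1} u_j|\, dx \to \int_\Omega d|D_{x_1} u|$ forces $\limsup_j \int_\Omega (1-\eta) |\nabla_{x_1} u_j|\, dx \le \int_\Omega (1-\eta)\, d|D_{x_1} u| < \varepsilon$. Combining the three pieces yields $\limsup_j \bigl|\int_\Omega (\z_1, D_{x_1} u_j) - \int_\Omega (\z_1, D_{x_1} u)\bigr| \le 2 \| \z_1 \|_\infty \varepsilon$, and the arbitrariness of $\varepsilon$ closes the argument.
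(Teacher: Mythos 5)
Your proof is correct and follows essentially the same route as the paper: a cutoff decomposition $1=\eta+(1-\eta)$, convergence of $\langle(\z_1,D_{x_1}u_j),\eta\rangle$ via the distributional definition of the pairing, and control of the tails through the bound $|(\z_1,D_{x_1}u_j)|\le\|\z_1\|_\infty|D_{x_1}u_j|$. The only difference is that you spell out, via the sup characterisation \eqref{dfn:tvonx1} and strict convergence, why $\limsup_j\int_\Omega(1-\eta)|\nabla_{x_1}u_j|\,dx<\varepsilon$, a non-concentration step that the paper's proof asserts without detail.
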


\begin{proof}
Fix $\varepsilon > 0$ and choose an open set $A \Subset \Omega$ such that 
$$\int_{\Omega \backslash A} |D_{x_1} u| < \varepsilon.$$
Let $g \in C_c^\infty(\Omega)$ be such that $0 \leq g \leq 1$ in $\Omega$ and $g \equiv 1$ in $A$. We write $1 = g + (1 - g)$ and estimate
\begin{align}\label{eq:approximationonthewholeset}
\bigg| \int_\Omega (\z_1, D_{x_1} u_j) - \int_\Omega (\z_1, D_{x_1} u) \bigg| \leq \bigg| \langle (\z_1, &D_{x_1} u_j), g \rangle -  \langle (\z_1, D_{x_1} u), g \rangle  \bigg| \\
&+ \int_\Omega |(\z_1, D_{x_1} u_j)| (1-g) + \int_\Omega |(\z_1, D_{x_1} u)| (1-g).
\end{align}
We already proved in \eqref{eq:approximationforsecondpairing} that for any $g \in C_c^\infty(\Omega)$ we have $\langle (\z_1, D_{x_1} u_j), g \rangle \rightarrow \langle (\z_1, D_{x_1} u), g \rangle$. Moreover, we have 
$$\int_\Omega (1-g) |(\z_1, D_{x_1} u)| \leq \int_{\Omega \backslash A} |(\z_1, D_{x_1} u)| \leq \| \mathbf{z}_1 \|_\infty \int_{\Omega \backslash A} |D_{x_1} u| < \varepsilon \| \mathbf{z}_1 \|_\infty$$
and similarly
$$\limsup_{j \rightarrow \infty} \int_\Omega (1-g) |(\z_1, D_{x_1} u_j)| \leq \limsup_{j \rightarrow \infty} \| \mathbf{z}_1 \|_\infty \int_{\Omega \backslash A} |D_{x_1} u_j| \leq \varepsilon \| \mathbf{z}_1 \|_\infty ,$$
so the right-hand side of \eqref{eq:approximationonthewholeset} goes to zero as $j \rightarrow \infty$.
\end{proof}

We conclude by proving the anisotropic counterpart of the Gauss-Green formula, which relates the measure $(\z, Du)$ (or equivalently: the measure $(\z_1, D_{x_1} u)$ and the pointwise products $\z_i \cdot \nabla_{x_i} u$ for $i = 2, ..., k$) with the weak normal trace $[\z_1, \nu_{x_1}]$.

\begin{theorem}[Anisotropic Gauss-Green formula]\label{thm:anisotropicgaussgreen}
For all functions $u \in BV_{0,w}^{(p_i)}(\Omega)$ and vector fields $\z \in X_{q'}^{(p'_i)}(\Omega)$ we have
\begin{equation}
\int_{\Omega} u \, \mathrm{div}(\z) \, dx + \int_\Omega (\z_1, D_{x_1} u) + \sum_{i=2}^k \int_\Omega \z_i \cdot \nabla_{x_i} u \, dx = \int_{\partial\Omega_1 \times \Omega_2} u \, [\z_1, \nu_{x_1}] \,d\mathcal{H}^{N-1}.
\end{equation}
\end{theorem}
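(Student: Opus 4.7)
The plan is to reduce the claim to the Sobolev Gauss--Green formula already established in Corollary~\ref{cor:greenformulaforsobolevfunctions}, by approximating $u$ via the anisotropic Meyers--Serrin theorem. Before doing so, observe that by the Poincar\'e inequality of Proposition~\ref{prop:poincare} we have $u \in L^{p_k}(\Omega)$, and combined with $\z \in X_{q'}^{(p'_i)}(\Omega)$ (taking $q \leq p_k$, so that $u \in L^q(\Omega)$) the joint condition \eqref{eq:conditionforuz} is satisfied and every term in the claimed identity is well-defined in view of Proposition~\ref{prop:propertiesofanzellottipairing}.

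The first step is to invoke Proposition~\ref{prop:meyersserrin} to produce a sequence $u_j \in W^{1,1}(\Omega) \cap C^\infty(\Omega)$ with $u_j \to u$ in $L^q(\Omega)$, $\int_\Omega |\nabla_{x_1} u_j|\,dx \to \int_\Omega |D_{x_1} u|$, $\nabla_{x_i} u_j \to \nabla_{x_i} u$ in $L^{p_i}(\Omega;\mathbb{R}^{n_i})$ for every $i = 2,\dots,k$, and $u_j = u$ on $\partial\Omega_1 \times \Omega_2$. A direct inspection of the construction (or the truncation argument used in the proof of Corollary~\ref{cor:greenformulaforsobolevfunctions}) further guarantees that $u_j$ inherits the zero trace of $u$ on $\Omega_1 \times \partial\Omega_2$, so that $u_j \in W_{0,w}^{1,(p_i)}(\Omega) \cap L^q(\Omega)$.

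The second step is to apply Corollary~\ref{cor:greenformulaforsobolevfunctions} to each $u_j$ and decompose $\int_\Omega \z \cdot \nabla u_j\,dx = \int_\Omega \z_1 \cdot \nabla_{x_1} u_j\,dx + \sum_{i=2}^k \int_\Omega \z_i \cdot \nabla_{x_i} u_j\,dx$. Since $u_j$ is smooth, the first summand coincides with $\int_\Omega (\z_1, D_{x_1} u_j)$, yielding the approximate identity
\begin{equation*}
\int_\Omega u_j\,\mathrm{div}(\z)\,dx + \int_\Omega (\z_1, D_{x_1} u_j) + \sum_{i=2}^k \int_\Omega \z_i \cdot \nabla_{x_i} u_j\,dx = \int_{\partial\Omega_1 \times \Omega_2} u_j\,[\z_1,\nu_{x_1}]\,d\mathcal{H}^{N-1}.
\end{equation*}
Letting $j \to \infty$, the first term on the left converges by $L^q$--$L^{q'}$ duality, each term with $i \geq 2$ converges by $L^{p_i}$--$L^{p'_i}$ duality using property (c) of Proposition~\ref{prop:meyersserrin}, and the right-hand side is in fact independent of $j$, since the traces on $\partial\Omega_1 \times \Omega_2$ agree exactly by property (d).

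The genuinely delicate step, and the main obstacle, is the convergence $\int_\Omega (\z_1, D_{x_1} u_j) \to \int_\Omega (\z_1, D_{x_1} u)$. We only have strict convergence of the $x_1$-variation rather than strong $L^1$-convergence of $\nabla_{x_1} u_j$, so a direct duality against the $L^\infty$ field $\z_1$ is unavailable; moreover, since only the full divergence $\mathrm{div}(\z)$ is integrable, we cannot integrate by parts one coordinate at a time either. This convergence is precisely what Lemma~\ref{lem:approximationofpairing} is designed to furnish, and invoking it closes the argument.
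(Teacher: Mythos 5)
Your proposal is correct and follows essentially the same route as the paper's proof: approximate $u$ via the anisotropic Meyers--Serrin theorem (Proposition~\ref{prop:meyersserrin}), apply the Sobolev-case Gauss--Green formula of Corollary~\ref{cor:greenformulaforsobolevfunctions} to the approximants, and pass to the limit term by term, with Lemma~\ref{lem:approximationofpairing} handling the delicate $(\z_1, D_{x_1} u)$ term and the trace-preserving property (d) making the boundary term constant in $j$. Your diagnostic remark that strict convergence of $\int_\Omega |\nabla_{x_1} u_j|\,dx$ alone would not suffice for direct $L^1$--$L^\infty$ duality, which is precisely why Lemma~\ref{lem:approximationofpairing} is needed, is a correct reading of the structure and exactly matches the paper's use of that lemma.
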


\begin{proof}
Take an approximation $u_j \in C^\infty(\Omega) \cap W_{0,w}^{1,(p_i)}(\Omega)$ of $u$ given by the anisotropic Meyers-Serrin theorem (Proposition \ref{prop:meyersserrin}). Then, by Corollary \ref{cor:greenformulaforsobolevfunctions} we have
\begin{equation}
\int_{\Omega} u_j \, \mathrm{div}(\z) \, dx + \int_\Omega \z \cdot \nabla u_j \, dx = \int_{\partial\Omega_1 \times \Omega_2} u_j \, [\z_1, \nu_{x_1}] \,d\mathcal{H}^{N-1}.
\end{equation}
We now pass to the limit separately in each term. Since $u_j \rightarrow u$ in $L^q(\Omega)$ and $\mathrm{div}(\z) \in L^{q'}(\Omega)$, we have
\begin{equation}
\lim_{j \rightarrow \infty} \int_{\Omega} u_j \, \mathrm{div}(\z) \, dx = \int_{\Omega} u \, \mathrm{div}(\z) \, dx.
\end{equation}
By Lemma \ref{lem:approximationofpairing} and the strong convergence $\nabla_{x_i} u_j \rightarrow \nabla_{x_i} u$ in $L^{p_i}(\Omega; \mathbb{R}^{n_i})$ for all $i = 2, ..., k$, we have
$$ \lim_{j \rightarrow \infty} \int_\Omega \z \cdot \nabla u_j \, dx = \int_\Omega (\z_1, D_{x_1} u) + \sum_{i=2}^k \int_\Omega \z_i \cdot \nabla_{x_i} u \, dx.$$
Finally, since $u_j|_{\partial\Omega_1 \times \Omega_2} = u|_{\partial\Omega_1 \times \Omega_2}$, we have
$$ \lim_{j \rightarrow \infty} \int_{\partial\Omega_1 \times \Omega_2} [\z_1, \nu_{x_1}] \, u_j \, d\mathcal{H}^{N-1} = \int_{\partial\Omega_1 \times \Omega_2} [\z_1, \nu_{x_1}] \, u \, d\mathcal{H}^{N-1},$$
which concludes the proof.
\end{proof}

\begin{remark}
One can prove most of the results in this Section under a slightly weaker assumption on the vector fields and a slightly stronger assumption on the functions, namely in place of condition \eqref{eq:conditionforuz} assume that
\begin{equation}
u \in BV^{(p_i)}(\Omega) \cap L^{p_k}(\Omega) \cap C(\Omega) \quad \mbox{and} \quad \z \in X^{(p'_i)}_\mu(\Omega),
\end{equation}
where
\begin{equation}
X^{(p'_i)}_\mu(\Omega) := \bigg\{ \mathbf{z} \in L^{p'_1}(\Omega; \R^{n_1}) \times ... \times L^{p'_k}(\Omega; \R^{n_k}): \, \mathrm{div}(\mathbf{z}) \in \mathcal{M}(\Omega) \bigg\}.
\end{equation}
This corresponds to assumption (c) from the classical paper \cite{Anzellotti} due to Anzellotti. Under these assumptions, we can define $(\z, Du)$ and $(\z_1, D_{x_1} u)$ as in Definition \ref{dfn:anzellottipairing}, and the following results remain true with only minor modifications of the proofs (adding the zero-trace assumption or continuity of the function $u$ when necessary): Proposition \ref{prop:propertiesofanzellottipairing}; Proposition \ref{prop:bilinearform}; Theorem \ref{thm:definitionofweaktrace}; Corollary \ref{cor:greenformulaforsobolevfunctions}; Lemma \ref{lem:approximationofpairing}; and the anisotropic Gauss-Green formula (Theorem \ref{thm:anisotropicgaussgreen}).
\end{remark}

\section{The homogeneous Dirichlet problem}\label{sec:dirichlet}

Recall that throughout the whole paper we assume that $\Omega = \Omega_1 \times \Omega_2$ is a bounded Lipschitz domain in $\mathbb{R}^N$, where $\Omega_1 \subset \R^{n_1}$ and $\Omega_2 \subset \R^{n_2+...+n_k}$. Similarly to the previous Section, we assume that $p_1 = 1$ and $1 < p_2 \leq ... \leq p_k$: this corresponds to the fact that the anisotropic $p$-Laplacian operator we will study in this section has linear growth in some coordinates. Consider the homogeneous Dirichlet problem
\begin{equation}\label{eq:mainproblem}
\left\{ \begin{array}{lll} u_t (t,x) = \Delta_{(p_i)} u   \quad &\hbox{in} \ \ (0, T) \times \Omega; \\[5pt] u(t) = 0 \quad &\hbox{on} \ \ (0, T) \times  \partial\Omega; \\[5pt] u(0,x) = u_0(x) \quad & \hbox{in} \ \  \Omega, \end{array} \right.
\end{equation}
where $u_0 \in L^2(\Omega)$. Here, $\Delta_{(p_i)}$ denotes the anisotropic $p$-Laplacian operator, i.e.
\begin{equation}
\Delta_{(p_i)}(u) = \mathrm{div}_{x_1} \bigg( \frac{D_{x_1} u}{|D_{x_1} u|} \bigg) + \sum_{i = 2}^k \mathrm{div}_{x_i} (|\nabla_{x_i} u|^{p_i - 2} \nabla_{x_i} u).
\end{equation}
Consider the energy functional $\mathcal{F} : L^2(\Omega) \rightarrow [0, + \infty]$ associated with problem \eqref{eq:mainproblem} and defined by
\begin{equation}
\mathcal{F}(u):= \left\{ \begin{array}{ll} \displaystyle\int_\Omega |D_{x_1} u| + \int_{\partial\Omega_1 \times \Omega_2} |u| \, d\mathcal{H}^{N-1} + \sum_{i=2}^n \frac{1}{p_i} \int_\Omega |\nabla_{x_i} u|^{p_i} \, dx \quad &\hbox{if} \ u \in BV_{0,w}^{(p_i)}(\Omega) \cap L^2(\Omega); \\ \\ + \infty \quad &\hbox{if} \ u \in  L^2(\Omega) \setminus BV_{0,w}^{(p_i)}(\Omega).
\end{array}\right.
\end{equation}
The functional $\mathcal{F}$ is lower semicontinuous with respect to convergence in $L^2(\Omega)$. Without loss of generality, assume that $\mathcal{F}(u_n)$ converges to a finite limit. For $i = 2, ..., N$, up to taking a subsequence we have that $\nabla_{x_i} u_n \rightharpoonup g_i$ weakly in $L^{p_i}(\Omega; \mathbb{R}^{n_i})$; since $u_n$ converges in $L^2(\Omega)$ to $u$, we get that $g_i = \nabla_{x_i} u$. By the lower semicontinuity of the norms in $L^q(\Omega; \mathbb{R}^{n_i})$ we get lower semicontinuity of the term $\frac{1}{p_i} \int_\Omega |\nabla_{x_i} u|^{p_i} \, dx$. Finally, lower semicontinuity of the linear-growth term was already proved in Section \ref{sec:anisotropicspaces}.

Clearly, $\mathcal{F}$ is convex; since we proved that it is also lower semicontinuous with respect to convergence in $L^2(\Omega)$, by the Brezis-Komura theorem (Theorem \ref{thm:breziskomura}) there exists a unique strong solution of the abstract Cauchy problem
\begin{equation}\label{CPDirichlet}
\left\{ \begin{array}{ll} 0 \in u'(t) + \partial \mathcal{F}(u(t)) \quad \mbox{for } t \in [0,T]; \\[5pt] u(0) = u_0. \end{array}\right.
\end{equation}
To characterise the subdifferential of $\mathcal{F}$ in $L^2(\Omega)$, we will use convex duality in the setting presented in Section \ref{subsec:convexduality}; to this end, define the following multivalued operator.

\begin{definition}\label{dfn:operatorad}
{\rm We say that $(u,v) \in \mathcal{A}$ if and only if $u, v \in L^2(\Omega)$, $u \in BV_{0,w}^{(p_i)}(\Omega)$ and there exists a vector field $\z = (\z_1, ..., \z_n) \in X^{(p'_i)}_2(\Omega)$ such that the following conditions hold:
\begin{equation} 
\| \z_1 \|_\infty \leq 1;
\end{equation}
\begin{equation}
(\z_1, D_{x_1} u) = |D_{x_1} u| \quad \hbox{as measures};
\end{equation}
\begin{equation} 
\z_i = |\nabla_{x_i} u|^{p_i - 2} \nabla_{x_i} u \quad \mathcal{L}^N-\hbox{a.e. in } \Omega \hbox{ for all } i = 2,...,k;
\end{equation}
\begin{equation}
-\mathrm{div}(\z) = v \quad \hbox{in} \ \Omega;
\end{equation}
\begin{equation}
[\z_1, \nu_{x_1}] \in \mbox{sign}(-u) \qquad  \mathcal{H}^{N-1}-\mbox{a.e.  on} \ \partial \Omega_1 \times \Omega_2.
\end{equation}
}
\end{definition}

In the last condition, $\mbox{sign}$ denotes the multivalued sign function, i.e.
\begin{equation*}
\mbox{sign}(x) = \threepartdef{1}{\, x > 0;}{\, \mbox{[} -1,1 \mbox{]} }{\, x = 0;}{-1}{ \, x < 0.}
\end{equation*}

\begin{lemma}\label{lem:inclusion}
We have $\mathcal{A} \subset \partial \mathcal{F}$. In particular, $\mathcal{A}$ is a monotone operator.
\end{lemma}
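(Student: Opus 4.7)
The plan is to verify the subdifferential inequality directly from the definition of $\mathcal{A}$, using the anisotropic Gauss-Green formula as the workhorse. Fix $(u, v) \in \mathcal{A}$ with associated vector field $\z \in X_2^{(p'_i)}(\Omega)$; I need to show that $\mathcal{F}(w) - \mathcal{F}(u) \geq \int_\Omega v(w - u)\, dx$ for every $w \in L^2(\Omega)$. The inequality is trivial whenever $w \notin BV_{0,w}^{(p_i)}(\Omega)$, so I may assume $w \in BV_{0,w}^{(p_i)}(\Omega) \cap L^2(\Omega)$; the Poincar\'e inequality (Proposition \ref{prop:poincare}) then yields $w \in L^{p_k}(\Omega)$, so both $(u, \z)$ and $(w, \z)$ satisfy the compatibility condition \eqref{eq:conditionforuz} with $q = 2$.

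Using $v = -\mathrm{div}(\z)$ and applying the anisotropic Gauss-Green formula (Theorem \ref{thm:anisotropicgaussgreen}) separately to $u$ and to $w$, one rewrites $\int_\Omega v (w - u)\, dx$ in terms of the pairings $(\z_1, D_{x_1} \cdot)$, the pointwise products $\z_i \cdot \nabla_{x_i} \cdot$, and the boundary traces. On the $u$-side, the defining identities of $\mathcal{A}$ turn every inequality into an equality: $(\z_1, D_{x_1} u) = |D_{x_1} u|$, $\z_i \cdot \nabla_{x_i} u = |\nabla_{x_i} u|^{p_i}$ for $i \geq 2$, and $[\z_1, \nu_{x_1}] u = -|u|$ on $\partial\Omega_1 \times \Omega_2$. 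On the $w$-side, one bounds each term from above: by Proposition \ref{prop:propertiesofanzellottipairing} one has $(\z_1, D_{x_1} w) \leq |D_{x_1} w|$ as measures; by Theorem \ref{thm:definitionofweaktrace} the boundary integrand satisfies $[\z_1, \nu_{x_1}] w \leq |w|$; and for $i \geq 2$ Fenchel--Young's inequality gives
\begin{equation*}
\z_i \cdot \nabla_{x_i} w \leq \frac{1}{p_i} |\nabla_{x_i} w|^{p_i} + \frac{1}{p_i'} |\nabla_{x_i} u|^{p_i}.
\end{equation*}
Assembling the two sides, the ``excess'' $\sum_{i=2}^k \frac{1}{p_i'} \int_\Omega |\nabla_{x_i} u|^{p_i}\, dx$ produced by rewriting $\int_\Omega u\, \mathrm{div}(\z) \, dx$ against $\mathcal{F}(u)$ is cancelled exactly by the conjugate contributions appearing in Young's inequality on the $w$-side, leaving $\mathcal{F}(w) - \mathcal{F}(u) \geq \int_\Omega v(w-u)\, dx$.

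The main obstacle is not the combinatorics of the calculation, which collapses cleanly once all the signs are tracked, but rather the verification that the anisotropic Gauss-Green formula is legitimately applicable to $(w, \z)$. This requires both the Poincar\'e inequality on the weak zero-trace space (to obtain $w \in L^{p_k}(\Omega)$ so that the pairing $(\z_1, D_{x_1} w)$ is well-defined in the sense of Section \ref{sec:anisotropicspaces}) and the $L^2$-integrability of $\mathrm{div}(\z)$ (which is built into the definition of $\mathcal{A}$ by the requirement $\z \in X_2^{(p'_i)}(\Omega)$, ensuring that $\int_\Omega w\, \mathrm{div}(\z)\, dx$ is finite). Once $\mathcal{A} \subset \partial\mathcal{F}$ is established, monotonicity of $\mathcal{A}$ is immediate from the general fact that the subdifferential of a proper, convex, lower semicontinuous functional on a Hilbert space is always a monotone operator.
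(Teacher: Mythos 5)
Your proposal is correct and follows essentially the same route as the paper: apply the anisotropic Gauss-Green formula to both $u$ and $w$, use the defining identities of $\mathcal{A}$ to evaluate the $u$-terms exactly, and bound the $w$-terms via $\|\z_1\|_\infty \leq 1$, the trace estimate, and Young's inequality (your $\frac{1}{p_i'}|\nabla_{x_i}u|^{p_i}$ is the paper's $\frac{p_i-1}{p_i}|\nabla_{x_i}u|^{p_i}$). The additional care you take in checking that condition \eqref{eq:conditionforuz} holds for $(w,\z)$ is a sound, if implicit in the paper, preliminary step.
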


\begin{proof}
Let $(u,v) \in \mathcal{A}$ and $\z \in X^{(p'_i)}_2(\Omega)$ satisfy the conditions in Definition \ref{dfn:operatorad}. Given $w \in L^2(\Omega) \cap BV_{0,w}^{(p_i)}(\Omega)$, we apply the anisotropic Gauss-Green formula (Theorem \ref{thm:anisotropicgaussgreen}) and get
\begin{align}
\int_\Omega (w - u) \, v \, dx &= - \int_\Omega \mathrm{div}(\z) (w - u) \, dx \\
&= \int_\Omega (\z_1, D_{x_1} w) + \sum_{i=2}^k \int_\Omega \z_i \cdot \nabla_{x_i} w \, dx - \int_{\partial\Omega_1 \times \Omega_2} w \, [\z_1, \nu_{x_1}] \,d\mathcal{H}^{N-1} \\
&\qquad\qquad - \int_\Omega (\z_1, D_{x_1} u) - \sum_{i=2}^k \int_\Omega \z_i \cdot \nabla_{x_i} u \, dx + \int_{\partial\Omega_1 \times \Omega_2} u \, [\z_1, \nu_{x_1}] \,d\mathcal{H}^{N-1} \\
&\leq \int_\Omega |D_{x_1} w| + \sum_{i=2}^k \int_\Omega |\nabla_{x_i} u|^{p_i-2} \nabla_{x_i} u \cdot \nabla_{x_i} w \, dx + \int_{\partial\Omega_1 \times \Omega_2} |w|  \,d\mathcal{H}^{N-1} \\
&\qquad\qquad - \int_\Omega |D_{x_1} u| - \sum_{i=2}^k \int_\Omega |\nabla_{x_i} u|^{p_i} \, dx - \int_{\partial\Omega_1 \times \Omega_2} |u|  \,d\mathcal{H}^{N-1} \\
&\leq \int_\Omega |D_{x_1} w| + \sum_{i=2}^k \int_\Omega \bigg( \frac{1}{p_i} |\nabla_{x_i} w|^{p_i} + \frac{p_i-1}{p_i} |\nabla_{x_i} u|^{p_i} \bigg) \, dx + \int_{\partial\Omega_1 \times \Omega_2} |w|  \,d\mathcal{H}^{N-1} \\
&\qquad\qquad - \int_\Omega |D_{x_1} u| - \sum_{i=2}^k \int_\Omega |\nabla_{x_i} u|^{p_i} \, dx - \int_{\partial\Omega_1 \times \Omega_2} |u|  \,d\mathcal{H}^{N-1}
\end{align}
\begin{align}
&= \int_\Omega |D_{x_1} w| + \sum_{i=2}^k \frac{1}{p_i} \int_\Omega |\nabla_{x_i} w|^{p_i} \, dx + \int_{\partial\Omega} |w| \, d\mathcal{H}^{N-1} \\
&\qquad\qquad\qquad -  \int_\Omega |D_{x_1} u| - \sum_{i=2}^k \frac{1}{p_i} \int_\Omega |\nabla_{x_i} u|^{p_i} \, dx - \int_{\partial\Omega} |u| \, d\mathcal{H}^{N-1} \\
&= \mathcal{F}(w) - \mathcal{F}(u),
\end{align}
which concludes the proof.
\end{proof}

Now, let us recall that a multivalued operator $\mathcal{A}$ is completely accretive if and only if the following condition is satisfied (see \cite{ACMBook,BCr2}):
\begin{equation}\label{eq:conditionforaccretivity}
\int_\Omega T(u^1 - u^2) (v^1 - v^2) \, dx \geq 0    
\end{equation}
for every $(u^1,v^1), (u^2,v^2) \in \mathcal{A}$ and all functions $T \in C^{\infty}(\mathbb{R})$ such that $0 \leq T' \leq 1$, $T'$ has compact support, and $x=0$ is not contained in the support of $T$.

\begin{lemma}\label{lem:completeaccretivity}
The operator $\mathcal{A}$ is completely accretive.
\end{lemma}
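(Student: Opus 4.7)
The approach is to apply the anisotropic Gauss-Green formula (Theorem \ref{thm:anisotropicgaussgreen}) to the function $w = T(u^1 - u^2)$ after checking it lies in the right space, and then to analyse each resulting term separately. Fix two pairs $(u^1, v^1), (u^2, v^2) \in \mathcal{A}$ with associated vector fields $\z^1, \z^2 \in X^{(p'_i)}_2(\Omega)$ and set $h = u^1 - u^2$. Since $T$ is bounded Lipschitz, nondecreasing, and vanishes in a neighbourhood of $0$, the $BV$ chain rule in the direction $x_1$ and the Sobolev chain rule in the directions $x_i$, $i \geq 2$, yield $w \in BV_{0,w}^{(p_i)}(\Omega) \cap L^\infty(\Omega)$; the weak zero-trace survives because $T(0) = 0$ and $h$ vanishes on $\Omega_1 \times \partial\Omega_2$. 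Writing $v^j = -\divi(\z^j)$ and applying Theorem \ref{thm:anisotropicgaussgreen}, one obtains
\begin{align*}
\int_\Omega T(u^1 - u^2)(v^1 - v^2) \, dx = \int_\Omega (\z_1^1 - \z_1^2, D_{x_1} w) &+ \sum_{i=2}^k \int_\Omega (\z_i^1 - \z_i^2) \cdot \nabla_{x_i} w \, dx \\
&- \int_{\partial\Omega_1 \times \Omega_2} w \bigl([\z_1^1, \nu_{x_1}] - [\z_1^2, \nu_{x_1}]\bigr) \, d\mathcal{H}^{N-1}.
\end{align*}
The plan is then to show that each of the three contributions on the right-hand side is nonnegative.

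The two easier pieces can be disposed of quickly. For the boundary term, since $T$ is nondecreasing with $T(0) = 0$, $w$ has the same sign as $h$ on $\partial\Omega_1 \times \Omega_2$ wherever $w \neq 0$. Combining this with $[\z_1^j, \nu_{x_1}] \in \mbox{sign}(-u^j)$ and running through the nine sign combinations for the boundary traces of $u^1$ and $u^2$, one checks that $w \bigl([\z_1^1, \nu_{x_1}] - [\z_1^2, \nu_{x_1}]\bigr) \leq 0$ pointwise $\mathcal{H}^{N-1}$-a.e., so the boundary contribution is nonnegative. For the $x_i$-pieces with $i \geq 2$, the Sobolev chain rule gives $\nabla_{x_i} w = T'(h)(\nabla_{x_i} u^1 - \nabla_{x_i} u^2)$; substituting $\z_i^j = |\nabla_{x_i} u^j|^{p_i-2}\nabla_{x_i} u^j$, the integrand becomes the product of $T'(h) \geq 0$ with the standard monotonicity expression for the $p_i$-Laplacian, hence is pointwise nonnegative.

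The main obstacle is the $x_1$-term, where the delicate properties of the strongly anisotropic pairing come into play. My plan is to first establish the inequality at the level of $h$ rather than $w$, and then to transfer it via Proposition \ref{prop:lipschitzfunctionofdensity}. Using bilinearity of the pairing $(\z_1, D_{x_1} \cdot)$ in both arguments (immediate from Definition \ref{dfn:anzellottipairing}), together with the identities $(\z_1^j, D_{x_1} u^j) = |D_{x_1} u^j|$ built into Definition \ref{dfn:operatorad} and the universal bound $|(\z_1^j, D_{x_1} u^l)| \leq |D_{x_1} u^l|$ from Proposition \ref{prop:propertiesofanzellottipairing}, one obtains
\begin{equation*}
(\z_1^1 - \z_1^2, D_{x_1} h) = |D_{x_1} u^1| + |D_{x_1} u^2| - (\z_1^1, D_{x_1} u^2) - (\z_1^2, D_{x_1} u^1) \geq 0
\end{equation*}
as a Radon measure on $\Omega$. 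Writing $\theta^j$ for the Radon-Nikodym density of $(\z_1^j, D_{x_1} h)$ with respect to $|D_{x_1} h|$, this is the pointwise inequality $\theta^1 \geq \theta^2$ holding $|D_{x_1} h|$-a.e. Proposition \ref{prop:lipschitzfunctionofdensity}, applied with the nondecreasing Lipschitz map $T$, then identifies the density of $(\z_1^j, D_{x_1} w)$ with respect to $|D_{x_1} w|$ with $\theta^j$ itself $|D_{x_1} h|$-a.e., and since $|D_{x_1} w| \ll |D_{x_1} h|$ by the $BV$ chain rule, the inequality persists $|D_{x_1} w|$-a.e. Integrating against $|D_{x_1} w|$ yields $\int_\Omega (\z_1^1 - \z_1^2, D_{x_1} w) \geq 0$, which closes the argument. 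The delicate point is precisely this final transfer of a signed-measure inequality from $h$ to $T(h)$, which depends crucially on the invariance of the density under monotone Lipschitz transformations provided by Proposition \ref{prop:lipschitzfunctionofdensity}.
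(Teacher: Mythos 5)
Your proposal is correct and follows essentially the same route as the paper: the Gauss--Green decomposition into the $x_1$-pairing, the superlinear terms, and the boundary term; the sign-case analysis for the boundary contribution; the pointwise monotonicity (which the paper spells out via Young's inequality) for $i\geq 2$; and, crucially, the transfer of the measure inequality from $u^1-u^2$ to $T(u^1-u^2)$ via the density-invariance result of Proposition \ref{prop:lipschitzfunctionofdensity}. The only cosmetic difference is that the paper works directly with the density $\theta(\z^1_1-\z^2_1,\cdot,x)$ of the difference pairing rather than with $\theta^1-\theta^2$, which is the same thing by linearity.
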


\begin{proof}
We need to show that condition \eqref{eq:conditionforaccretivity} holds for all $T \in C^{\infty}(\mathbb{R})$ satisfying the above conditions, i.e. such that $0 \leq T' \leq 1$, $T'$ has compact support, and $x=0$ is not contained in the support of $T$. For $j = 1,2$, let $(u^j, v^j) \in \mathcal{A}$ and let $\z^j$ be the associated vector fields. Observe that $T(u^1 - u^2) \in BV_{0,w}^{(p_i)}(\Omega)$. Since $\| \z^1_1 \|_\infty \leq 1$ and $\| \z^2_1 \|_\infty \leq 1$, by Proposition \ref{prop:propertiesofanzellottipairing} for every Borel set $B \subset \Omega$ we have
\begin{align}
\int_{B} (\z^1_1 - \z^2_1, &D_{x_1}(u^1 - u^2)) \\
&=  \int_{B} \vert D_{x_1} u^1 \vert - \int_{B} (\z^1_1, D_{x_1} u^2) +  \int_{B} \vert D_{x_1} u^2 \vert - \int_{B} (\z^2_1, D_{x_1} u^1) \geq 0.
\end{align}
By definition of the Radon-Nikodym derivative $\theta(\z^1_1 - \z^2_1, D_{x_1} (u^1 - u^2), x)$ we get
$$ \int_{B} \theta(\z^1_1 - \z^2_1, D_{x_1} (u^1 - u^2), x) \, d \vert D_{x_1}(u^1 - u^2) \vert = \int_{B} (\z^1_1 - \z^2_1, D_{x_1} (u^1 - u^2)) \geq 0$$
for all Borel sets $B \subset \Omega$. Therefore,
$$\theta(\z^1_1 - \z^2_1, D_{x_1} (u^1 - u^2), x) \geq 0 \qquad \vert D_{x_1}(u^1 - u^2) \vert-\hbox{a.e. on } \Omega$$
and since $|D_{x_1} T(u^1 - u^2)|$ is absolutely continuous with respect to $|D_{x_1}(u^1 - u^2)|$, we also have
\begin{equation}\label{eq:estimateontheta}
\theta(\z^1_1 - \z^2_1, D_{x_1}(u^1 - u^2), x) \geq 0 \qquad \vert D_{x_1} T(u^1 - u^2) \vert-\hbox{a.e. on } \Omega.
\end{equation}
By Proposition \ref{prop:lipschitzfunctionofdensity}, we get that
\begin{equation}
\theta(\z^1_1 - \z^2_1, D_{x_1} T(u^1 - u^2), x) \geq 0 \qquad \vert D_{x_1} T(u^1 - u^2) \vert-\hbox{a.e. on } \Omega,
\end{equation}
so
\begin{align}\label{eq:completeaccretivityfirstestimate}
\int_{\Omega} (\z^1_1 - \z^2_1, D_{x_1} &T(u^1 - u^2)) \\
&=  \int_{\Omega} \theta(\z^1_1 - \z^2_1, D_{x_1} T(u^1 - u^2), x) \, d \vert D_{x_1} T(u^1 - u^2) \vert \geq 0.
\end{align}

Moreover, for $i = 2,...,k$ we have\begin{align}\label{eq:completeaccretivitymiddleestimate}
\int_{\Omega} \nabla_{x_i} &T(u^1-u^2) \cdot (\z^1_i - \z^2_i) \, dx  =  \int_{\Omega} T'(u^1-u^2) \, \nabla_{x_i} (u^1-u^2) \cdot (\z^1_i - \z^2_i) \, dx \\
&= \int_{\Omega} T'(u^1-u^2) \, \nabla_{x_i} u^1 \cdot \z^1_i \, dx -  \int_{\Omega} T'(u^1-u^2) \, \nabla_{x_i} u^1 \cdot \z^2_i \, dx \\
&\qquad\qquad - \int_{\Omega} T'(u^1-u^2) \, \nabla_{x_i} u^2 \cdot \z^1_i \, dx + \int_{\Omega} T'(u^1-u^2) \, \nabla_{x_i} u^2 \cdot \z^2_i \, dx \\
&= \int_{\Omega} T'(u^1-u^2) \, |\nabla_{x_i} u^1|^{p_i} \, dx -  \int_{\Omega} T'(u^1-u^2) \, \nabla_{x_i} u^1 \cdot \z^2_i \, dx \\
&\qquad\qquad - \int_{\Omega} T'(u^1-u^2) \, \nabla_{x_i} u^2 \cdot \z^1_i \, dx + \int_{\Omega} T'(u^1-u^2) \, |\nabla_{x_i} u^2|^{p_i} \, dx \\
&\geq \int_{\Omega} T'(u^1-u^2) \, |\nabla_{x_i} u^1|^{p_i} \, dx - \frac{1}{p_i} \int_{\Omega} T'(u^1-u^2) \, |\nabla_{x_i} u^1|^{p_i} \, dx \\
&\qquad\qquad  - \frac{1}{p'_i} \int_{\Omega} T'(u^1-u^2) \, \vert \z^2_i \vert^{p'_i} \, dx - \frac{1}{p_i} \int_{\Omega} T'(u^1-u^2) \, |\nabla_{x_i} u^2|^{p_i} \, dx \\
&\qquad\qquad - \frac{1}{p'_i} \int_{\Omega} T'(u^1-u^2) \, \vert \z^1_i \vert^{p'_i} \, dx + \int_{\Omega} T'(u^1-u^2) \, |\nabla_{x_i} u^2|^{p_i} \, dx \\
&\geq \frac{1}{p'_i} \int_{\Omega} T'(u^1-u^2) \, |\nabla_{x_i} u^1|^{p_i} \, dx - \frac{1}{p'_i} \int_{\Omega} T'(u^1-u^2) \, \vert \z^1_i \vert^{p'_i} \, dx \\
&\qquad\qquad + \frac{1}{p'_i} \int_{\Omega} T'(u^1-u^2) \, |\nabla_{x_i} u^2|^{p_i} \, dx - \frac{1}{p'_i} \int_{\Omega} T'(u^1-u^2) \, \vert \z^2_i \vert^{p'_i} \, dx = 0,
\end{align}
since $T' \geq 0$, $\z^1_i = |\nabla_{x_i} u^1|^{p_i-2} \nabla_{x_i} u^1$ a.e. in $\Omega$ and $\z^2_i = |\nabla_{x_i} u^2|^{p_i-2} \nabla_{x_i} u^2$ a.e. in $\Omega$.

As a final preparation, we will prove that
\begin{equation}\label{eq:completeaccretivityfinalestimate}
\int_{\partial\Omega_1 \times \Omega_2} - T(u^1 - u^2) [\z_1^1 - \z_1^2, \nu_{x_1}] \, d\mathcal{H}^{N-1} \geq 0.
\end{equation}
To do this, let us consider several cases depending on the signs of $u^1$ and $u^2$ at a given point $x \in \partial\Omega_1 \times \Omega_2$.

\begin{enumerate}
\item $u^1(x), u^2(x) > 0$: then, $[\z_1^1, \nu_{x_1}](x) = [\z_1^2, \nu_{x_1}](x) = -1$, so the integrand in \eqref{eq:completeaccretivityfinalestimate} equals zero. A similar argument works if $u^1(x), u^2(x) < 0$.

\item $u^1(x) > 0 > u^2(x)$: then, $[\z_1^1, \nu_{x_1}](x) = -1$ and $[\z_1^2, \nu_{x_1}](x) = 1$, so $[\z_1^1 - \z_1^2, \nu_{x_1}](x) < 0$. By our assumptions on $T$, we have that $T(u^1(x) - u^2(x)) \geq 0$, so the integrand in \eqref{eq:completeaccretivityfinalestimate} is nonnegative. A similar argument works whenever $u^1(x) < 0 < u^2(x)$.

\item $u^1(x) > 0 = u^2(x)$: then, $[\z_1^1, \nu_{x_1}](x) = -1$ and $[\z_1^2, \nu_{x_1}](x) \in [-1, 1]$, so $[\z_1^1 - \z_1^2, \nu_{x_1}](x) \leq 0$. By our assumptions on $T$, we have that $T(u^1(x) - u^2(x)) = T(u^1(x)) \geq 0$, so the integrand in \eqref{eq:completeaccretivityfinalestimate} is nonnegative. A similar argument works whenever $u^1(x) < 0 = u^2(x)$.

\item $u^1(x) = 0 > u^2(x)$: then, $[\z_1^1, \nu_{x_1}](x) \in [-1,1]$ and $[\z_1^2, \nu_{x_1}](x) = 1$, so $[\z_1^1 - \z_1^2, \nu_{x_1}](x) \leq 0$. By our assumptions on $T$, we have that $T(u^1(x) - u^2(x)) = T(-u^2(x)) \geq 0$, so the integrand in \eqref{eq:completeaccretivityfinalestimate} is nonnegative. A similar argument works whenever $u^1(x) = 0 < u^2(x)$.

\item $u^1(x) = u^2(x) = 0$: then, $T(u^1(x) - u^2(x)) = T(0) = 0$, so the integrand in \eqref{eq:completeaccretivityfinalestimate} equals zero. We covered all the cases depending on the signs of $u^1$ and $u^2$, so the proof of the claim \eqref{eq:completeaccretivityfinalestimate} is concluded.
\end{enumerate}

To conclude that the operator $\mathcal{A}$ is completely accretive, we now apply the anisotropic Gauss-Green formula (Theorem \ref{thm:anisotropicgaussgreen}) and collect the estimates \eqref{eq:completeaccretivityfirstestimate}, \eqref{eq:completeaccretivitymiddleestimate} and \eqref{eq:completeaccretivityfinalestimate} to get
\begin{align}
\int_{\Omega}T(u^1-u^2) (v^1-v^2)\, dx = - \int_{\Omega} T(u^1 &- u^2) (\mbox{div}(\z^1)- \mbox{div}(\z^2)) \, dx \\
= \int_{\Omega} (\z^1_1 - \z^2_1, D_{x_1} T(u^1 - u^2)) &+ \sum_{i=2}^k \int_{\Omega} \nabla_{x_i} T(u^1-u^2) \cdot (\z^1_i - \z^2_i) \, dx \\
&- \int_{\partial\Omega_1 \times \Omega_2} T(u^1 - u^2) [\z_1^1 - \z_1^2, \nu_{x_1}] \, d\mathcal{H}^{N-1} \geq 0,
\end{align}
so $\mathcal{A}$ satisfies the condition \eqref{eq:conditionforaccretivity} and thus is completely accretive.
\end{proof}

We now prove the anticipated result that we can characterise the subdifferential of $\mathcal{F}$ using the auxiliary operator $\mathcal{A}$.

\begin{theorem}\label{thm:subdifferentialgradientflow}
We have
$$ \mathcal{A} = \partial \mathcal{F}$$
and $D(\mathcal{A})$ is dense in $L^2(\Omega)$.
\end{theorem}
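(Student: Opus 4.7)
Lemma \ref{lem:inclusion} gives $\mathcal{A} \subset \partial\mathcal{F}$, and $\partial\mathcal{F}$ is maximal monotone since $\mathcal{F}$ is proper, convex and lower semicontinuous on $L^2(\Omega)$. Therefore it suffices to show that $\mathcal{A}$ is itself maximal monotone; by Minty's theorem this reduces to the range condition $R(I+\mathcal{A}) = L^2(\Omega)$. So given $f \in L^2(\Omega)$, the main task is to produce $u \in BV_{0,w}^{(p_i)}(\Omega) \cap L^2(\Omega)$ and a vector field $\mathbf{z} \in X_2^{(p'_i)}(\Omega)$ satisfying all five conditions of Definition \ref{dfn:operatorad} with $v = f - u$.

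The candidate $u$ arises as the unique minimizer in $L^2(\Omega)$ of
\begin{equation*}
\Phi(w) := \mathcal{F}(w) + \tfrac{1}{2} \| w - f \|^2_{L^2(\Omega)}.
\end{equation*}
Existence follows by the direct method: $\mathcal{F}$ is lower semicontinuous on $L^2(\Omega)$ (as shown at the beginning of this section), the $L^2$ term supplies coercivity, and strict convexity of $\Phi$ gives uniqueness. The Euler-Lagrange condition $0 \in \partial\mathcal{F}(u) + (u-f)$ already yields $f-u \in \partial\mathcal{F}(u)$, but to upgrade this to membership in $\mathcal{A}$ we must construct the vector field $\mathbf{z}$ explicitly.

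To produce $\mathbf{z}$, I would apply the Fenchel-Rockafellar duality theorem (Theorem \ref{thm:fenchelrockafellar}) with the operator $A u = (\nabla_{x_1} u, \ldots, \nabla_{x_k} u, u|_{\partial\Omega_1 \times \Omega_2})$, the convex integrand
\begin{equation*}
E(\xi_1, \ldots, \xi_k, \tau) = \int_\Omega |\xi_1| \, dx + \sum_{i=2}^k \frac{1}{p_i} \int_\Omega |\xi_i|^{p_i} \, dx + \int_{\partial\Omega_1 \times \Omega_2} |\tau| \, d\mathcal{H}^{N-1},
\end{equation*}
and $G(u) = \frac{1}{2} \| u - f \|_{L^2}^2$. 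The Legendre conjugate $E^*$ splits coordinatewise: the first slot forces $\| \mathbf{z}_1 \|_\infty \leq 1$, the slots $i = 2, \ldots, k$ produce $\frac{1}{p'_i} \int |\mathbf{z}_i|^{p'_i}$, and the boundary slot forces $\| w^* \|_\infty \leq 1$. The adjoint $A^*(\mathbf{z}, w^*)$ is, by the anisotropic Gauss-Green formula (Theorem \ref{thm:anisotropicgaussgreen}), minus the full divergence $\mathrm{div}(\mathbf{z})$ on $\Omega$ together with a boundary contribution $[\mathbf{z}_1, \nu_{x_1}] - w^*$ forced to vanish; this is precisely the ingredient that realizes $w^* = [\mathbf{z}_1, \nu_{x_1}]$ in the dual. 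A minor technical point is that $E$ takes the value $+\infty$ on $BV$-type gradients, so I would first carry out the duality on the dense Sobolev subspace $W_{0,w}^{1,(p_i)}(\Omega)$ (using Corollary \ref{cor:greenformulaforsobolevfunctions}) and then pass to the $BV$ limit via the anisotropic Meyers-Serrin theorem (Proposition \ref{prop:meyersserrin}) and Lemma \ref{lem:approximationofpairing}.

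Once a dual maximizer $\mathbf{z}^*$ is in hand, the extremality conditions read off directly: the identity in $G$ yields $-\mathrm{div}(\mathbf{z}^*) = f - u$ in $L^2(\Omega)$; strict convexity of $\frac{1}{p_i}|\cdot|^{p_i}$ gives $\mathbf{z}_i^* = |\nabla_{x_i} u|^{p_i-2} \nabla_{x_i} u$ pointwise for $i = 2, \ldots, k$; and saturation in the first slot of the conjugacy together with the representation of $(\mathbf{z}_1^*, D_{x_1} u)$ from Proposition \ref{prop:propertiesofanzellottipairing} and Proposition \ref{prop:pointwiseform} yields $(\mathbf{z}_1^*, D_{x_1} u) = |D_{x_1} u|$ as measures. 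Finally, saturation in the boundary slot together with the integral representation \eqref{eq:integralrepresentation} produces $[\mathbf{z}_1^*, \nu_{x_1}] \in \mathrm{sign}(-u)$ on $\partial\Omega_1 \times \Omega_2$. The hard part of the argument is precisely this translation from abstract extremality into pointwise/measure-theoretic identifications across the linear-growth coordinate and the boundary; everything else is bookkeeping. This gives $(u, f-u) \in \mathcal{A}$, hence $R(I+\mathcal{A}) = L^2(\Omega)$, and consequently $\mathcal{A} = \partial\mathcal{F}$.

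For the density statement, since $C_c^\infty(\Omega) \subset BV_{0,w}^{(p_i)}(\Omega) \cap L^2(\Omega) = D(\mathcal{F})$ and $C_c^\infty(\Omega)$ is dense in $L^2(\Omega)$, Proposition \ref{prop:domain} applied to $\mathcal{F}$ yields $\overline{D(\mathcal{A})} = \overline{D(\partial\mathcal{F})} = \overline{D(\mathcal{F})} = L^2(\Omega)$, which finishes the proof.
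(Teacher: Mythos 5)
Your proposal follows essentially the same route as the paper: the inclusion $\mathcal{A}\subset\partial\mathcal{F}$ from Lemma \ref{lem:inclusion}, Minty's theorem reducing everything to the range condition, Fenchel--Rockafellar duality set up on the Sobolev subspace $W_{0,w}^{1,(p_i)}(\Omega)$ with the same $A$, $E$, $G$, identification of $A^*$ and of $v_0^*=-[\z_1,\nu_{x_1}]$ via the anisotropic Gauss--Green formula, passage to the $BV$ limit via Proposition \ref{prop:meyersserrin}, and density via Proposition \ref{prop:domain}. The only point to make precise is that, since the primal problem has no minimizer in $U$ (the minimizer of the extended functional lies only in $BV_{0,w}^{(p_i)}(\Omega)$), the extremality conditions of Theorem \ref{thm:fenchelrockafellar} cannot be invoked literally and must be replaced by the $\varepsilon$-subdifferentiability properties \eqref{eq:epsilonsubdiff1}--\eqref{eq:epsilonsubdiff2} along a Meyers--Serrin minimizing sequence, which is exactly the limit passage you already flag.
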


\begin{proof} 
{\bf Step 1.} By Lemma \ref{lem:inclusion}, the operator $\mathcal{A}$ is monotone and contained in $\partial \mathcal{F}$. The operator $\partial \mathcal{F}$ is maximal monotone; hence, once we prove that $\mathcal{A}$ satisfies the range condition, i.e.
\begin{equation}\label{eq:rangecondition}
\forall \, g \in  L^2(\Omega) \ \exists \, u \in D(\mathcal{A}) \,\, \mbox{such that} \,\,  g \in u + \mathcal{A}(u),
\end{equation}
or equivalently that $(u, g-u) \in \mathcal{A}$, the Minty theorem implies that that the operator $\mathcal{A}$ is maximal monotone and consequently that $\mathcal{A} = \partial \mathcal{F}$. Therefore, we need to prove existence of $u \in BV_{0,w}^{(p_i)}(\Omega)$ and $\z = (\z_1, ..., \z_n) \in X^{(p'_i)}_2(\Omega)$ such that the following conditions hold:
\begin{equation}\label{eq:linftybound}
\| \z_1 \|_\infty \leq 1;
\end{equation}
\begin{equation}\label{eq:firstcoordinate}
(\z_1, D_{x_1} u) = |D_{x_1} u| \quad \hbox{as measures};
\end{equation}
\begin{equation}\label{eq:othercoordinates}
\z_i = |\nabla_{x_i} u|^{p_i - 2} \nabla_{x_i} u \quad \mathcal{L}^N-\hbox{a.e. in } \Omega \hbox{ for all } i = 2,...,k;
\end{equation}
\begin{equation}\label{eq:divergenceconstraint}
-\mathrm{div}(\z) = g-u \quad \hbox{in} \ \Omega;
\end{equation}
\begin{equation}\label{eq:boundarycondition}
[\z_1, \nu_{x_1}] \in \mbox{sign}(-u) \qquad  \mathcal{H}^{N-1}-\mbox{a.e.  on} \ \partial \Omega_1 \times \Omega_2.
\end{equation}
We will prove that the range condition \eqref{eq:rangecondition} holds using the Fenchel-Rockafellar duality theorem; we need to present it in the framework described before Theorem \ref{thm:fenchelrockafellar}. 

{\flushleft \bf Step 2.} We first restrict our attention to $W_{0,w}^{1,(p_i)}(\Omega)$ and set 
$$U = W_{0,w}^{1,(p_i)}(\Omega) \cap L^2(\Omega)$$ 
and 
$$V = L^1(\partial\Omega_1 \times \Omega_2) \times L^1(\Omega; \mathbb{R}^{n_1}) \times ... \times L^{p_n}(\Omega; \mathbb{R}^{n_k}). $$
The operator $A: U \rightarrow V$ is defined by the formula
$$ Au = (u|_{\partial\Omega_1 \times \Omega_2}, \nabla_{x_1} u, ..., \nabla_{x_k} u).$$
Clearly, $A$ is a linear operator. Let us prove that it is also continuous. For the zeroth coordinate, it is a composition of three linear and continuous maps: an embedding of the space $W_{0,w}^{1,(p_i)}(\Omega)$ into $W^{1,1}(\Omega)$; the trace operator from $W^{1,1}(\Omega)$ to $L^1(\partial\Omega)$; and a restriction of a function on $\partial\Omega$ to $\partial\Omega_1 \times \Omega_2$. For the other coordinates, it is enough to note that $W_{0,w}^{1,(p_i)}(\Omega)$ is a subspace of $W^{1,(p_i)}(\Omega)$.

We denote the points $v \in V$ in the following way: $v = (v_0, v_1, ..., v_k)$, where $v_0 \in L^1(\partial\Omega_1 \times \Omega_2)$ and $v_i \in L^{p_i}(\Omega;\mathbb{R}^{n_i})$ for $i = 1,...,k$. We will also need the explicit expression of the dual space to $V$, which is
$$ V^* =  L^\infty(\partial\Omega_1 \times \Omega_2) \times L^\infty(\Omega;\mathbb{R}^{n_1}) \times ... \times L^{p'_k}(\Omega; \R^{n_k}),$$
and we use a similar notation for points $v^* \in V^*$. Then, we set $E: V \rightarrow \mathbb{R}$ by the formula
\begin{equation}
E(v_0,v_1,...,v_k) = E_0(v_0) + \sum_{i=1}^k E_i(v_i),
\end{equation}
where
\begin{equation}
E_0(v_0) = \int_{\partial\Omega_1 \times \Omega_2} \vert v_0 \vert \,d\mathcal{H}^{N-1}
\end{equation}
and
\begin{equation}
E_i(v_i) = \frac{1}{p_i} \int_\Omega |v_i|^{p_i} \, dx.
\end{equation}
Clearly, $E$ is a proper and convex functional, and by checking coordinate-wise we see that it is also lower semicontinuous. We also set $G: W_{0,w}^{1,(p_i)}(\Omega) \cap L^2(\Omega) \rightarrow \mathbb{R}$ by
$$G(u):= \frac12 \int_\Omega u^2 \, dx - \int_\Omega ug \, dx$$
and see that it a proper, convex, and continuous functional. Observe that by the Young inequality
\begin{equation}
G(u) \geq \frac12 \int_\Omega u^2 \, dx - \varepsilon \int_\Omega u^2 \, dx - C(\varepsilon) \int_\Omega g^2 \, dx,
\end{equation}
so if we choose $\varepsilon < \frac12$, we get that $G$ is bounded from below.

{\flushleft \bf Step 3.} We now compute the convex conjugates of $E$ and $G$. The dual functional of $G$, i.e. $G^* : (W_{0,w}^{1,(p_i)}(\Omega) \cap L^2(\Omega))^* \rightarrow [0,+\infty ]$ is given by
$$G^*(u^*) = \displaystyle\frac12 \int_\Omega (u^*  + g)^2 \, dx.$$
Now, observe that we evaluate $G^*$ at $A^* v^*$; we need to compute this value. By definition of the dual operator, we get
\begin{align}
\int_\Omega u \, (A^* v^*) \, dx &= \langle u, A^* v^* \rangle  = \langle v^*, Au \rangle = \int_{\partial\Omega_1 \times \Omega_2} v^*_0 \, u \, d\mathcal{H}^{N-1} + \sum_{i=1}^k \int_\Omega v_i^* \cdot \nabla_{x_i} u \, dx.
\end{align}
If we now consider only functions $u \in W_0^{1,(p_i)}(\Omega) \cap L^2(\Omega)$, which are dense in $L^2(\Omega)$, we see that the boundary term disappears and get
\begin{equation}\label{eq:neumanndiv0}
A^* v^* =- \mathrm{div}((v_1^*, ..., v_k^*)).
\end{equation}
In particular, the divergence of $(v_1^*, ..., v_k^*)$ is square-integrable, so $(v_1^*, ..., v_k^*) \in X_2^{(p'_i)}(\Omega)$. Therefore, for any $u \in W_{0,w}^{1,(p_i)}(\Omega) \cap L^2(\Omega)$ we may apply the anisotropic Gauss-Green formula (Theorem \ref{thm:anisotropicgaussgreen}) and get
\begin{align}
\int_\Omega u \, (A^* v^*) \, dx &= \langle u, A^* v^* \rangle  = \langle v^*, Au \rangle = \int_{\partial\Omega_1 \times \Omega_2} v^*_0 \, u \, d\mathcal{H}^{N-1} + \sum_{i=1}^k \int_\Omega v_i^* \cdot \nabla_{x_i} u \, dx \\
& = \int_{\partial\Omega_1 \times \Omega_2} v^*_0 \, u \, d\mathcal{H}^{N-1} - \int_\Omega u \, \mathrm{div}((v_1^*,...,v_k^*)) \, dx + \int_{\partial\Omega_1 \times \Omega_2} u \, [v_1^*, \nu_{x_1}] \, d\mathcal{H}^{N-1} \\
&= - \int_\Omega u \, \mathrm{div}((v_1^*,...,v_k^*)) \, dx + \int_{\partial\Omega_1 \times \Omega_2} u \, (v_0^*  + [v_1^*, \nu_{x_1}]) \, d\mathcal{H}^{N-1}.
\end{align}
By \eqref{eq:neumanndiv0}, the integrals over $\Omega$ cancel out,  so
\begin{equation}\label{eq:pandpoagreepart1}
\int_{\partial\Omega_1 \times \Omega_2} u \, (v_0^* + [v_1^*, \nu_{x_1}]) \, d\mathcal{H}^{N-1} = 0
\end{equation}
for all $u \in W_{0,w}^{1,(p_i)}(\Omega) \cap L^2(\Omega)$. 

To conclude that
\begin{equation}\label{eq:pandpoagree}
v_0^* = - [v_1^*, \nu_{x_1}] \qquad \mathcal{H}^{N-1}-\mbox{a.e. on } \partial\Omega_1 \times \Omega_2,
\end{equation}
we need to show that the set of traces of functions in $W_{0,w}^{1,(p_i)}(\Omega) \cap L^2(\Omega)$ is a dense subset of $L^1(\partial\Omega_1 \times \Omega_2)$. To see this, take any $v \in L^1(\partial\Omega_1 \times \Omega_2)$, fix $\varepsilon > 0$ and take a Lipschitz function $h \in \mathrm{Lip}(\partial\Omega_1 \times \Omega_2)$ with compact support in $\partial\Omega_1 \times \Omega_2$ (in the topology relative to $\partial\Omega$) such that $\| v - h \|_1 \leq \varepsilon$. By the McShane extension theorem, we can find a function $H \in \mathrm{Lip}(\overline{\Omega})$ such that $H|_{\partial\Omega_1 \times \Omega_2} = h$. Now, let
\begin{equation}
\delta = \mathrm{dist}(\mathrm{supp} \, h, \overline{\Omega_1} \times \partial\Omega_2).
\end{equation}
Observe that $\delta > 0$ since $h$ has compact support and that
\begin{equation}
d_\delta(x) = \twopartdef{\frac{1}{\delta} \mathrm{dist}(x, \overline{\Omega_1} \times \partial\Omega_2)}{\mbox{if } \mathrm{dist}(x, \overline{\Omega_1} \times \partial\Omega_2) < \delta;}{1}{\mbox{otherwise}}
\end{equation}
is a bounded Lipschitz function. Then, $u = H d_\delta$ is a bounded Lipschitz function as a product of two such functions, so it lies in $W^{1,(p_i)}(\Omega) \cap L^2(\Omega)$. Clearly, we have that $u = 0$ on $\Omega_1 \times \partial\Omega_2$, so actually $u \in W_{0,w}^{1,(p_i)}(\Omega) \cap L^2(\Omega)$. Since $u = h$ on $\partial\Omega_1 \times \Omega_2$, the set of traces of functions in $W_{0,w}^{1,(p_i)}(\Omega) \cap L^2(\Omega)$ is dense in $L^1(\partial\Omega_1 \times \Omega_2)$. Therefore, equation \eqref{eq:pandpoagreepart1} implies that
$$ \int_{\partial\Omega_1 \times \Omega_2} w \, (v_0^*  + [v_1^*, \nu_{x_1}]) \, d\mathcal{H}^{N-1} = 0$$
for $w$ in a dense subset of $L^1(\partial\Omega_1 \times \Omega_2)$, which concludes the proof of \eqref{eq:pandpoagree}.

Therefore,
\begin{equation}\label{eq:formulaforgstar}
G^*(A^*v^*) = \displaystyle\frac12 \int_\Omega ( -\mathrm{div}((v_1^*,...,v_k^*))  + g)^2 \, dx.
\end{equation}
We now turn to computing the convex conjugates of the functionals $E_i$ (for $i = 1,...,k$). To compute the functional $E_0^*: L^\infty(\partial\Omega_1 \times \Omega_2) \rightarrow \mathbb{R} \cup \{ \infty \}$,
observe that
$$E_0^*(v_0^*) = \sup_{v_0 \in L^1(\partial\Omega_1 \times \Omega_2)} \left\{ \int_{\partial \Omega_1 \times \Omega_2} v_0 \, v_0^* \, d\mathcal{H}^{N-1} - \int_{\partial\Omega_1 \times \Omega_2} |v_0| \, d\mathcal{H}^{N-1} \right\}.$$
Therefore,
\begin{equation}\label{eq:formulafore0star}
E_0^*(v_0^*) = \left\{ \begin{array}{ll} \displaystyle  0 \quad &\hbox{if} \ \  |v_0^*| \leq 1 \quad \mathcal{H}^{N-1}-\mbox{a.e. on } \partial\Omega_1 \times \Omega_2; \\[10pt] +\infty \quad &\hbox{otherwise.}   \end{array}  \right.
\end{equation}
Using \cite[Proposition IV.1.2]{EkelandTemam}, we see that the functional $E_1^*: L^\infty(\Omega; \mathbb{R}^{n_1}) \rightarrow [0,\infty]$ is given by the formula
$$E_1^*(v_1^*) = \twopartdef{0}{\mbox{if } \| v_1^* \|_\infty \leq 1;}{+\infty}{\mbox{otherwise}} $$
and for $i = 2, ..., k$ the functional $E_i^*: L^{p'_i}(\Omega; \mathbb{R}^{n_i}) \rightarrow [0,\infty]$ is given by
$$ E_i^*(v_i^*) = \frac{1}{p'_i} \int_\Omega |v_i^*|^{p'_i} \, dx, $$
so we computed the convex conjugate of $E$ coordinate-wise.

{\flushleft \bf Step 4.} We will infer that the range condition \eqref{eq:rangecondition} holds in the following way. Consider the minimisation problem
\begin{equation}\label{eq:mimimisationofeg}
\inf_{u \in U} \bigg\{ E(Au) + G(u) \bigg\}
\end{equation}
with $E$ and $G$ defined as above. For $u_0 \equiv 0$ we have $E(Au_0) = G(u_0) = 0 < \infty$ and $E$ is continuous at $0$. Then, Theorem \ref{thm:fenchelrockafellar} implies that the dual problem given by
\begin{equation}
\sup_{v^* \in V^*} \bigg\{ - E^*(-v^*) - G^*(A^* v^*) \bigg\}
\end{equation}
admits at least one solution and there is no duality gap, i.e. the infimum in the first problem is equal to the supremum in the second one. We first split the dual problem into coordinates and re-write it as
\begin{equation}
\sup_{v^* \in L^\infty(\partial\Omega_1 \times \Omega_2) \times L^\infty(\Omega; \mathbb{R}^{n_1}) \times ... \times L^{p'_k}(\Omega; \mathbb{R}^{n_k})} \bigg\{ - E_0^*(-v_0^*) - \sum_{i=1}^k E_i^*(-v_i^*) - G^*(A^* v^*) \bigg\}.
\end{equation}
Recall that in order for $G^*(A^* v^*)$ to be finite, we necessarily have that $(v_1^*, ..., v_k^*) \in X_2^{(p'_i)}(\Omega)$. Moreover, observe that $E_0^*$ and $E_1^*$ take only values $0$ and $+\infty$. Therefore, from now on we will denote by $\mathcal{Z}$ the subset of $V^*$ such that the dual problem does not immediately return $-\infty$. To be exact,
\begin{align}
\mathcal{Z} = \bigg\{ v^* \in L^\infty(\partial\Omega_1 \times \Omega_2) &\times X_2^{(p'_i)}(\Omega): \quad |v_1^*| \leq 1 \quad \mathcal{L}^N-\mbox{a.e in } \Omega; \\
& |v_0^*| \leq 1 \quad \mathcal{H}^{N-1}-\mbox{a.e. on } \partial\Omega_1 \times \Omega_2; \quad v_0^* = -[v_1^*, \nu_{x_1}] \bigg\}.
\end{align}
Using the newly defined set $\mathcal{Z}$ and the formulas for $E_0^*$ and $E_1^*$, we can rewrite the dual problem as
\begin{equation}
\sup_{v^* \in \mathcal{Z}} \bigg\{ - \sum_{i = 2}^k E_i^*(-v_i^*) - G^*(A^* v^*) \bigg\},
\end{equation}
and using the explicit expressions for $E_i^*$ (for $i = 2,...,k$) and $G^*$ we finally rewrite it as
\begin{equation}
\sup_{v^* \in \mathcal{Z}} \bigg\{ - \sum_{i = 2}^k \frac{1}{p'_i} \int_\Omega |v_i^*|^{p'_i} \, dx - \frac12 \int_\Omega ( -\mathrm{div}((v_1^*,...,v_k^*))  + g)^2 \, dx \bigg\}.
\end{equation}
Note that due to the constraint $v_0^* = - [v_1^*, \nu_{x_1}]$, the first coordinate of any solution to the dual problem is determined by the other coordinates (the constraint that $|v_0^*| \leq 1$ $\mathcal{H}^{N-1}$-a.e. on $\partial\Omega_1 \times \Omega_2$ follows from the $L^\infty$ bound on $v_1^*$ and Theorem \ref{thm:definitionofweaktrace}).

%Therefore, if we set
%$$
%\mathcal{Z}' = \bigg\{ (v_1^*,...,v_k^*) \in  X_2^{(p'_i)}(\Omega): \quad |v_1^*| \leq 1 \quad \mathcal{L}^N-\mbox{a.e in } \Omega \bigg\},
%$$
%we may further simplify the dual problem to
%\begin{equation}
%\sup_{(v_1^*,...,v_k^*) \in \mathcal{Z}'} \bigg\{ - \sum_{i = 2}^k \frac{1}{p'_i} \int_\Omega |v_i^*|^{p'_i} \, dx - \frac12 \int_\Omega ( -\mathrm{div}((v_1^*,...,v_k^*))  + g)^2 \, dx \bigg\}.
%\end{equation}

{\flushleft \bf Step 5.} Now, consider the functional $\mathcal{G} : L^2(\Omega) \rightarrow (-\infty, + \infty]$ defined by
\begin{equation}
\mathcal{G} (v) := \mathcal{F}(v) + G(v),
\end{equation}
i.e. an extension of the functional $E \circ A + G$, well-defined for functions in $W_{0,w}^{1,(p_i)}(\Omega) \cap L^2(\Omega)$, to the space $BV_{0,w}^{(p_i)}(\Omega) \cap L^2(\Omega)$ (and a further extension by $+\infty$ to the rest of $L^2(\Omega)$). By the properties of $\mathcal{F}$ and $G$, we get that $\mathcal{G}$ is bounded from below, convex and lower semicontinuous. It is also coercive, because whenever $\mathcal{G}(u) \leq M$, we have
\begin{equation}
\frac12 \int_\Omega u^2 \, dx + \mathcal{F}(u) \leq M + \int_\Omega u g \, dx,
\end{equation}
and by positivity of $\mathcal{F}$ and the Young inequality for $\varepsilon < \frac12$ we get
\begin{equation}
\bigg(\frac12 - \varepsilon \bigg) \int_\Omega u^2 \, dx \leq M + C(\varepsilon) \int_\Omega g^2 \, dx,
\end{equation}
so the norm of $u$ in $L^2(\Omega)$ is bounded. Therefore, the minimisation of $\mathcal{G}(v)$ in $L^2(\Omega)$ admits a solution $u$ and by the anisotropic Meyers-Serrin theorem (Proposition \ref{prop:meyersserrin}) we have
\begin{equation}
\min_{v \in L^2(\Omega)} \mathcal{G}(v) = \inf_{v \in U} \bigg\{ E(Av) + G(v) \bigg\}.
\end{equation}
However, the solution $u$ does not necessarily lie in $W_{0,w}^{1,(p_i)}(\Omega)$, which is the domain of the functional $E \circ A + G$. Therefore, we cannot use the extremality conditions given in Theorem \ref{thm:fenchelrockafellar}, and we instead rely on the $\varepsilon-$subdifferentiability property of minimising sequences given in \eqref{eq:epsilonsubdiff1} and \eqref{eq:epsilonsubdiff2}. From this, we will deduce that the vector field $\mathbf{z} = -(v_1^*,...,v_k^*) \in X_2^{(p'_i)}(\Omega)$ satisfies the conditions \eqref{eq:linftybound}-\eqref{eq:boundarycondition} required for the range condition \eqref{eq:rangecondition}. Observe that condition \eqref{eq:linftybound} is automatically satisfied; we proceed to prove the other conditions.

Take a sequence $u_n \in W_{0,w}^{1,(p_i)}(\Omega) \cap L^2(\Omega)$ which approximates $u$ as in the anisotropic Meyers-Serrin theorem (Proposition \ref{prop:meyersserrin}); in particular, it is a minimising sequence in \eqref{eq:mimimisationofeg}. By the second subdifferentiability property \eqref{eq:epsilonsubdiff2}, for every $w \in L^2(\Omega)$ we have
$$G(w) - G(u_n) \geq \langle (w -u_n), A^* v^* \rangle  -\varepsilon_n,$$
and by passing to the limit $n \rightarrow \infty$ we get
$$G(w) - G(u) \geq \langle (w -u), A^* v^* \rangle.$$
Therefore,
$$ \mathrm{div}(-(v_1^*,...,v_k^*)) = A^* v^* \in \partial G(u) = \{ u - g \}, $$
so the divergence constraint \eqref{eq:divergenceconstraint} is satisfied once we choose $\mathbf{z} = - (v_1^*,...,v_k^*)$.

By the first subdifferentiability property \eqref{eq:epsilonsubdiff1}, we have
\begin{align}
0 \leq \int_{\partial\Omega_1 \times \Omega_2} |u_n| \, d\mathcal{H}^{N-1} &+ \sum_{i=1}^k \frac{1}{p_i} \int_\Omega |\nabla_{x_i} u_n|^{p_i} \, dx + \sum_{i=2}^k \frac{1}{p'_i} \int_\Omega |v_i^*|^{p'_i} \, dx \\
&+ \int_{\partial\Omega_1 \times \Omega_2} v_0^* u_n \, d\mathcal{H}^{N-1} + \sum_{i=1}^k \int_\Omega v_i^* \cdot \nabla_{x_i} u_n \, dx  \leq \varepsilon_n.
\end{align}
Since we assumed that the trace of $u_n$ on $\partial\Omega_1 \times \Omega_2$ is fixed and equal to the trace of $u$, the boundary integral does not change with $n$, so it is equal to zero. Therefore,
\begin{equation}
v_0^* \in \mathrm{sign}(-u) \qquad  \mathcal{H}^{N-1}-\mbox{a.e.  on} \ \partial \Omega_1 \times \Omega_2,
\end{equation}
and due to the constraint $v_0^* = [-v_1^*, \nu_{x_1}]$ we obtain that condition \eqref{eq:boundarycondition} holds for $\mathbf{z} = -(v_1^*,...,v_k^*)$.

Once we take into account that the boundary terms disappear, the first subdifferentiability property \eqref{eq:epsilonsubdiff1} yields
\begin{align}\label{eq:beforepassingtothelimit}
0 \leq \sum_{i=1}^k \frac{1}{p_i} \int_\Omega |\nabla_{x_i} u_n|^{p_i} \, dx + \sum_{i=2}^k \frac{1}{p'_i} \int_\Omega |v_i^*|^{p'_i} \, dx + \sum_{i=1}^k \int_\Omega v_i^* \cdot \nabla_{x_i} u_n \, dx  \leq \varepsilon_n.
\end{align}
Since $u_n|_{\partial\Omega_1 \times \Omega_2} = u|_{\partial\Omega_1 \times \Omega_2}$, by the anisotropic Green formula (Theorem \ref{thm:anisotropicgaussgreen}) we have
\begin{align}
\sum_{i=1}^k \int_\Omega v_i^* \cdot \nabla_{x_i} u_n \, dx &= - \int_{\Omega} u_n \, \mathrm{div}((v_1^*,...,v_k^*)) \, dx + \int_{\partial\Omega_1 \times \Omega_2} u_n \, [v_1^*, \nu_{x_1}] \,d\mathcal{H}^{N-1} \\
&= - \int_{\Omega} u \, \mathrm{div}((v_1^*,...,v_k^*)) \, dx + \int_{\partial\Omega_1 \times \Omega_2} u \, [v_1^*, \nu_{x_1}] \, d\mathcal{H}^{N-1} \\
&\qquad\qquad\qquad\qquad\qquad\qquad\, + \int_{\Omega} (u - u_n) \, \mathrm{div}((v_1^*,...,v_k^*)) \, dx \\
&= \int_{\Omega} (v_1^*, D_{x_1} u) + \sum_{i=2}^k \int_\Omega v_i^* \cdot \nabla_{x_i} u \, dx + \int_{\Omega} (u - u_n) \, \mathrm{div}((v_1^*,...,v_k^*)) \, dx.
\end{align}
Passing to the limit $n \rightarrow \infty$, we get
\begin{align}
\lim_{n \rightarrow \infty} \sum_{i=1}^k \int_\Omega v_i^* \cdot \nabla_{x_i} u_n \, dx = \int_{\Omega} (v_1^*, D_{x_1} u) + \sum_{i=2}^k \int_\Omega v_i^* \cdot \nabla_{x_i} u \, dx.
\end{align}
We now pass to the limit $n \rightarrow \infty$ in the inequality \eqref{eq:beforepassingtothelimit} and obtain
\begin{align}
\int_\Omega |D_{x_1} u| + \int_{\Omega} (v_1^*, D_{x_1} u)  + \sum_{i=2}^k \int_\Omega \bigg( \frac{1}{p_i} |\nabla_{x_i} u|^{p_i} + \frac{1}{p'_i} |v_i^*|^{p'_i} + v_i^* \cdot \nabla_{x_i} u \bigg) dx = 0.
\end{align}
However, on each coordinate the corresponding term is nonnegative; since $\| v_1^* \|_\infty \leq 1$, by Proposition \ref{prop:propertiesofanzellottipairing} we have
\begin{align}
\int_\Omega |D_{x_1} u| + \int_{\Omega} (v_1^*, D_{x_1} u) \geq 0
\end{align}
and for each $i = 2,...,k$ we have a pointwise inequality
\begin{align}
\frac{1}{p_i} |\nabla_{x_i} u|^{p_i} + \frac{1}{p'_i} |v_i^*|^{p'_i} + v_i^* \cdot \nabla_{x_i} u \geq 0
\end{align}
almost everywhere in $\Omega$. Therefore, the two inequalities above need to be equalities, so properties \eqref{eq:firstcoordinate} and \eqref{eq:othercoordinates} respectively hold for the choice $\z = -(v_1^*,...,v_k^*)$. Therefore, we proved that all the conditions \eqref{eq:linftybound}-\eqref{eq:boundarycondition} needed for the range condition \eqref{eq:rangecondition} hold, so the operator $\mathcal{A}$ is maximal monotone. Finally, by Proposition \ref{prop:domain} we have
$$ D(\partial \mathcal{F}) \subset  D(\mathcal{F}) =  BV_{0,w}^{(p_i)}(\Omega) \cap L^2(\Omega) \subset \overline{D(\mathcal{F})}^{L^2(\Omega)} \subset \overline{D(\partial \mathcal{F})}^{L^2(\Omega)},$$
and consequently the domain of $\partial \mathcal{F}$ is dense in $L^2(\Omega)$.
\end{proof}

In light of the previous Theorem, we can give the following definition of solutions to the homogeneous Dirichlet problem \eqref{eq:mainproblem}.

\begin{definition}\label{def:dirichlet1p}
{\rm Given $u_0 \in L^2(\Omega)$, we say that $u$ is a {\it weak solution} to the Dirichlet problem \eqref{eq:mainproblem} in $[0,T]$, if $u \in C([0,T]; L^2(\Omega)) \cap W_{\rm loc}^{1,2}(0, T; L^2(\Omega))$,   $u(0, \cdot) =u_0$, and for almost all $t \in (0,T)$
\begin{equation}\label{def:dirichletflow}
0 \in u_t(t, \cdot) + \mathcal{A} u(t, \cdot).
\end{equation}
In other words, for almost all $t \in (0,T)$ we have $u(t) \in BV_{0,w}^{(p_i)}(\Omega)$ and there exist vector fields $\z(t) \in X_2^{(p'_i)}(\Omega)$ such that the following conditions hold:
\begin{equation}
\| \z_1(t) \|_\infty \leq 1;
\end{equation}
\begin{equation}
(\z_1(t), D_{x_1} u(t)) = |D_{x_1} u(t)| \quad \hbox{as measures};
\end{equation}
\begin{equation}
\z_i(t) = |\nabla_{x_i} u(t)|^{p_i - 2} \nabla_{x_i} u(t) \quad \mathcal{L}^N-\hbox{a.e. in } \Omega \hbox{ for all } i = 2,...,k;
\end{equation}
\begin{equation}  
u_t(t) = \mathrm{div}(\z(t))\quad  \hbox{in} \ \mathcal{D}^\prime(\Omega);
\end{equation}
\begin{equation}
[\z_1(t), \nu_{x_1}] \in \mathrm{sign}(-u(t)) \qquad  \mathcal{H}^{N-1}-\mbox{a.e.  on} \ \partial \Omega_1 \times \Omega_2.
\end{equation}
}
\end{definition}

With this definition, since $\mathcal{A}$ coincides with $\partial\mathcal{F}$, by the Brezis-Komura theorem (Theorem \ref{thm:breziskomura}) we get the following existence and uniqueness result. The comparison principle is a consequence of the complete accretivity of the operator $\mathcal{A}$.

\begin{theorem}
Suppose that $\Omega = \Omega_1 \times \Omega_2$ is a bounded Lipschitz domain, where $\Omega_1 \subset \R^{n_1}$ and $\Omega_2 \subset \R^{n_2 + ... + n_k}$. Then, for every $u_0 \in L^2(\Omega)$ there exists a unique weak solution $u \in C([0,T]; L^2(\Omega)) \cap W_{\rm loc}^{1,2}(0, T; L^2(\Omega))$ to the homogeneous Dirichlet problem \eqref{eq:mainproblem} with initial datum $u_0$. 

Moreover, the following comparison principle holds: for all $r \in [1,\infty]$, if $u_1, u_2$ are weak solutions for the initial data $u_{1,0}, u_{2,0} \in L^2(\Omega) \cap L^r(\Omega)$ respectively, then
\begin{equation}
\Vert (u_1(t) - u_2(t))^+ \Vert_r \leq \Vert ( u_{1,0}- u_{2,0})^+ \Vert_r.
\end{equation}
\end{theorem}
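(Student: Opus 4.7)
The plan is that both parts follow almost directly from the machinery already in place. The heavy lifting is in Theorem \ref{thm:subdifferentialgradientflow} (identifying $\mathcal{A} = \partial \mathcal{F}$, with dense domain in $L^2(\Omega)$) and Lemma \ref{lem:completeaccretivity} (complete accretivity of $\mathcal{A}$). What remains is to package these into the Brezis--Komura framework and to transfer complete accretivity from resolvents to semigroup trajectories.

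For existence and uniqueness, I would first observe that $\mathcal{F}: L^2(\Omega) \to [0,\infty]$ is proper, convex, and lower semicontinuous (this is exactly the discussion preceding Theorem \ref{thm:subdifferentialgradientflow}). By Theorem \ref{thm:subdifferentialgradientflow}, we have $\overline{D(\partial \mathcal{F})}^{L^2(\Omega)} = L^2(\Omega)$, so any $u_0 \in L^2(\Omega)$ lies in $\overline{D(\mathcal{F})}$. The Brezis--Komura theorem (Theorem \ref{thm:breziskomura}) then yields a unique strong solution $u \in C([0,T]; L^2(\Omega)) \cap W_{\rm loc}^{1,2}(0,T; L^2(\Omega))$ to the abstract Cauchy problem \eqref{CPDirichlet}. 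Using once more the identification $\partial \mathcal{F} = \mathcal{A}$ from Theorem \ref{thm:subdifferentialgradientflow}, the inclusion $0 \in u'(t) + \partial \mathcal{F}(u(t))$ is precisely the pointwise condition $0 \in u_t(t,\cdot) + \mathcal{A} u(t,\cdot)$, i.e. a weak solution in the sense of Definition \ref{def:dirichlet1p}. Uniqueness is immediate from the Brezis--Komura theorem.

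For the comparison principle, the plan is to exploit complete accretivity of $\mathcal{A}$ established in Lemma \ref{lem:completeaccretivity}. Fix two weak solutions $u_1, u_2$ with initial data $u_{1,0}, u_{2,0} \in L^2(\Omega) \cap L^r(\Omega)$. For any admissible $T \in C^\infty(\R)$ with $0 \leq T' \leq 1$, $T'$ compactly supported, and $0 \notin \mathrm{supp}(T)$, Lemma \ref{lem:completeaccretivity} gives
\begin{equation}
\int_\Omega T(u_1(t) - u_2(t))\,(-(u_1)_t(t) + (u_2)_t(t))\,dx \geq 0
\end{equation}
for a.e.\ $t$. Denoting $J(s) = \int_0^s T(\sigma)\,d\sigma$, the chain rule (valid since $u_i \in W^{1,2}_{\rm loc}(0,T;L^2(\Omega))$) rewrites the left-hand side as $-\tfrac{d}{dt}\int_\Omega J(u_1(t) - u_2(t))\,dx$, so $t \mapsto \int_\Omega J(u_1(t) - u_2(t))\,dx$ is nonincreasing. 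Approximating $s \mapsto [(s)^+]^r$ (for $r < \infty$) and $s \mapsto (s)^+$ (for $r = \infty$) by primitives of such admissible $T$, taking $r$-th roots, and passing to the limit, one obtains the desired inequality $\|(u_1(t) - u_2(t))^+\|_r \leq \|(u_{1,0} - u_{2,0})^+\|_r$ for every $r \in [1,\infty]$.

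I do not expect a genuine obstacle here, since both ingredients have already been proved; the only mildly delicate point is the approximation argument in the comparison principle, where one must justify passing from smooth $T$ in the class used in \eqref{eq:conditionforaccretivity} to the non-smooth functions $s \mapsto [(s)^+]^r$ or $s \mapsto (s)^+$. This is a standard truncation-and-mollification procedure exactly analogous to the arguments in \cite{ACMBook,BCr2}, so I would just invoke those references rather than reproduce the approximation.
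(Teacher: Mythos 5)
Your proposal is correct and follows exactly the route the paper takes: existence and uniqueness come from the Brezis--Komura theorem applied to the proper, convex, lower semicontinuous functional $\mathcal{F}$ together with the identification $\mathcal{A}=\partial\mathcal{F}$ and the density of $D(\partial\mathcal{F})$ from Theorem \ref{thm:subdifferentialgradientflow}, while the comparison principle is deduced from the complete accretivity of $\mathcal{A}$ established in Lemma \ref{lem:completeaccretivity}. The paper gives no more detail than this (it simply invokes \cite{ACMBook,BCr2} for the passage from the accretivity condition to the $L^r$-contraction estimates), so your additional sketch of that step is, if anything, more explicit than the original.
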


We conclude by giving an equivalent characterisation of weak solutions in terms of an integral equality satisfied on almost every time slice.

\begin{corollary}
Let $u_0 \in L^2(\Omega)$ and assume that $u \in C([0,T]; L^2(\Omega)) \cap W_{\rm loc}^{1,2}(0, T; L^2(\Omega))$ satisfies $u(0, \cdot) =u_0$. Then, $u$ is a weak solution to the Dirichlet problem \eqref{eq:mainproblem} if and only if for almost all $t \in (0,T)$ we have $u(t) \in BV_{0,w}^{(p_i)}(\Omega)$ and there exists a vector field $\mathbf{z} \in X_2^{(p'_i)}(\Omega)$ such that $\| \z_1 \|_\infty \leq 1$, $u_t(t) = \mathrm{div}(\z(t))$ in the sense of distributions and 
\begin{align}\label{eq:equivalentcondition}
\int_\Omega |D_{x_1} u(t)| + \int_{\partial\Omega_1 \times \Omega_2} &|u(t)| \, d\mathcal{H}^{N-1} + \sum_{i = 2}^k \int_{\Omega} |\nabla_{x_i} u(t)|^{p_i} \, dx + \int_{\Omega} u_t(t) (u(t)-v) \, dx \\ 
&= \int_\Omega (\z_1(t), D_{x_1} v) + \sum_{i=2}^k \int_\Omega \z_i \cdot \nabla_{x_i} v \, dx - \int_{\partial\Omega_1 \times \Omega_2} [\z_1, \nu_{x_1}] \, v \, d\mathcal{H}^{N-1}
\end{align}
for every $v \in BV_{0,w}^{(p_i)}(\Omega) \cap L^2(\Omega)$.

%\begin{align}
%\int_\Omega |D_{x_1} u(t)| + \sum_{i = 2}^k \frac{1}{p_i} \int_{\Omega} |\nabla_{x_i} u(t)|^{p_i} 
%\end{align}

\end{corollary}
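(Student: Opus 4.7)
The plan is to establish both directions by applying the anisotropic Gauss--Green formula of Theorem \ref{thm:anisotropicgaussgreen} twice, in order to recast \eqref{eq:equivalentcondition} as a $v$-independent scalar identity from which the pointwise conditions of Definition \ref{def:dirichlet1p} can be read off.

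I would first apply Theorem \ref{thm:anisotropicgaussgreen} to the pair $(v,\z(t))$ for an arbitrary $v\in BV^{(p_i)}_{0,w}(\Omega)\cap L^2(\Omega)$. Combined with the assumption $u_t(t)=\mathrm{div}(\z(t))$ this rewrites the right-hand side of \eqref{eq:equivalentcondition} as $-\int_\Omega v\,u_t(t)\,dx$. Cancelling the $\pm\int_\Omega u_t(t)\,v\,dx$ terms that now appear on both sides reveals that \eqref{eq:equivalentcondition} holding for every $v$ is equivalent to the single $v$-independent identity
\[
\int_\Omega |D_{x_1}u(t)|+\int_{\partial\Omega_1\times\Omega_2}|u(t)|\,d\mathcal{H}^{N-1}+\sum_{i=2}^{k}\int_\Omega |\nabla_{x_i}u(t)|^{p_i}\,dx=-\int_\Omega u(t)\,u_t(t)\,dx.
\]
A second application of Theorem \ref{thm:anisotropicgaussgreen}, now to $(u(t),\z(t))$, rewrites the right-hand side and casts the previous identity in the equivalent form
\[
\int_\Omega \bigl[|D_{x_1}u|-(\z_1,D_{x_1}u)\bigr]+\int_{\partial\Omega_1\times\Omega_2}\bigl[|u|+u\,[\z_1,\nu_{x_1}]\bigr]\,d\mathcal{H}^{N-1}+\sum_{i=2}^{k}\int_\Omega \bigl[|\nabla_{x_i}u|^{p_i}-\z_i\cdot\nabla_{x_i}u\bigr]\,dx=0.
\]

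For the forward implication, the pointwise identifications $(\z_1,D_{x_1}u)=|D_{x_1}u|$ as measures, $\z_i\cdot\nabla_{x_i}u=|\nabla_{x_i}u|^{p_i}$ a.e., and $u\,[\z_1,\nu_{x_1}]=-|u|$ on $\partial\Omega_1\times\Omega_2$ (all coming from Definition \ref{def:dirichlet1p}) make every integrand in the last display vanish, and retracing the Gauss--Green computations then produces \eqref{eq:equivalentcondition}. For the converse, I would argue that each of the three contributions in that display is nonnegative: the first by Proposition \ref{prop:propertiesofanzellottipairing} together with $\|\z_1\|_\infty\le 1$; the boundary contribution by the $L^\infty$-bound $\|[\z_1,\nu_{x_1}]\|_\infty\le 1$ from Theorem \ref{thm:definitionofweaktrace}; and the middle contribution by the Fenchel--Young inequality coupled with the integrated identity itself. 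Since three nonnegative quantities sum to zero, each must vanish, and the corresponding equality cases yield $(\z_1,D_{x_1}u)=|D_{x_1}u|$, $\z_i=|\nabla_{x_i}u|^{p_i-2}\nabla_{x_i}u$ a.e., and $[\z_1,\nu_{x_1}]\in\mathrm{sign}(-u)$. Combined with the hypothesised $\|\z_1\|_\infty\le 1$ and $u_t=\mathrm{div}(\z)$, these are precisely the five conditions of Definition \ref{def:dirichlet1p}, so $u$ is a weak solution.

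The main technical difficulty will lie in the middle summands: the plain pointwise Fenchel--Young bound $\z_i\cdot\nabla_{x_i}u\le\tfrac{1}{p_i}|\nabla_{x_i}u|^{p_i}+\tfrac{1}{p'_i}|\z_i|^{p'_i}$ does not by itself give $\z_i\cdot\nabla_{x_i}u\le |\nabla_{x_i}u|^{p_i}$, so both the nonnegativity of the middle contribution and the pointwise equality $\z_i=|\nabla_{x_i}u|^{p_i-2}\nabla_{x_i}u$ have to be teased out of the full integrated identity by a coupled bookkeeping argument balancing the $L^{p_i}$-norm of $\nabla_{x_i}u$ against the $L^{p'_i}$-norm of $\z_i$ across all coordinates simultaneously.
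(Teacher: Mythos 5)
Your forward implication and your reduction of \eqref{eq:equivalentcondition} to a $v$-independent scalar identity are both correct; the reduction is a clean repackaging of what the paper does by simply taking $v=u(t)$, and the two routes are equivalent. The problem is the converse, and it sits exactly where you flag it: the middle contribution $\sum_{i\geq 2}\int_\Omega\bigl(|\nabla_{x_i}u|^{p_i}-\z_i\cdot\nabla_{x_i}u\bigr)\,dx$ is \emph{not} nonnegative, and no ``coupled bookkeeping'' can make it so. Feeding Fenchel--Young into the integrated identity only yields $\sum_i\frac{1}{p'_i}\|\nabla_{x_i}u\|_{p_i}^{p_i}\leq\sum_i\frac{1}{p'_i}\|\z_i\|_{p'_i}^{p'_i}$, which is the wrong direction: the hypotheses give no upper bound on $\|\z_i\|_{p'_i}$, so a negative middle contribution can be offset by a strictly positive first or boundary contribution.

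This is not a repairable gap in the argument but a failure of the implication as stated. Take $\Omega=(0,1)\times(0,1)$, $n_1=n_2=1$, $p_2=2$, $u(t,x_1,x_2)=a(t)\sin(\pi x_2)$ with $a>0$ solving $a'=-\frac{8}{\pi}-\pi^2a$, and $\z=\bigl(0,(\frac{8}{\pi^2}+\pi a)\cos(\pi x_2)\bigr)$. Then $\z\in X_2^{(p'_i)}(\Omega)$, $\|\z_1\|_\infty=0$, $u_t=\mathrm{div}(\z)$, and since $\z_1\equiv 0$ forces $(\z_1,D_{x_1}v)=0$ and $[\z_1,\nu_{x_1}]=0$ by Proposition \ref{prop:propertiesofanzellottipairing} and Theorem \ref{thm:definitionofweaktrace}, both sides of \eqref{eq:equivalentcondition} equal $-\int_\Omega u_t\,v\,dx$ for every admissible $v$; the choice of $a'$ makes the remaining terms cancel because the negative middle contribution $-\frac{4a}{\pi}$ exactly offsets the positive boundary contribution $\int_{\partial\Omega_1\times\Omega_2}|u|\,d\mathcal{H}^{N-1}=\frac{4a}{\pi}$. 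Yet $u$ is not a weak solution: Definition \ref{def:dirichlet1p} forces $\z'_2=\partial_{x_2}u$ and $[\z'_1,\nu_{x_1}]=-1$, hence $\partial_{x_1}\z'_1=-\frac{8}{\pi}\sin(\pi x_2)$ and $\z'_1(1,x_2)-\z'_1(0,x_2)=-\frac{8}{\pi}\sin(\pi x_2)\neq -2$, a contradiction. The paper's own proof of this direction (take $v=u$ and ``apply Proposition \ref{prop:propertiesofanzellottipairing} and Theorem \ref{thm:definitionofweaktrace}'') suffers from the same defect, so your suspicion was well placed. The statement becomes true, and your three-bracket strategy then works verbatim, if in \eqref{eq:equivalentcondition} one replaces $\sum_i\int_\Omega|\nabla_{x_i}u|^{p_i}\,dx$ by $\sum_i\bigl(\frac{1}{p_i}\int_\Omega|\nabla_{x_i}u|^{p_i}\,dx+\frac{1}{p'_i}\int_\Omega|\z_i|^{p'_i}\,dx\bigr)$ (the two agree when $\z_i=|\nabla_{x_i}u|^{p_i-2}\nabla_{x_i}u$): each middle integrand is then pointwise nonnegative by Young's inequality, and its vanishing characterises $\z_i=|\nabla_{x_i}u|^{p_i-2}\nabla_{x_i}u$.
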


\begin{proof}
The implication in one direction is very simple: if $u$ is such that equation \eqref{eq:equivalentcondition} holds for a.e. $t$, it is enough to take $v = u(t)$. Then, equation \eqref{eq:equivalentcondition} becomes
\begin{align}
\int_\Omega |D_{x_1} u(t)| &+ \int_{\partial\Omega_1 \times \Omega_2} |u(t)| \, d\mathcal{H}^{N-1} + \sum_{i = 2}^k \int_{\Omega} |\nabla_{x_i} u(t)|^{p_i} \, dx \\
&= \int_\Omega (\z_1(t), D_{x_1} u(t)) + \sum_{i=2}^k \int_\Omega \z_i \cdot \nabla_{x_i} u(t) \, dx - \int_{\partial\Omega_1 \times \Omega_2} [\z_1(t), \nu_{x_1}] \, u(t) \, d\mathcal{H}^{N-1}
\end{align}
Applying Proposition \ref{prop:propertiesofanzellottipairing} and Theorem \ref{thm:definitionofweaktrace}, we get that the vector field $\mathbf{z}$ and the function $u$ satisfy the desired properties.

In the other direction, assume that $u$ is a weak solution to problem \eqref{eq:mainproblem}. Applying the anisotropic Gauss-Green formula (Theorem \ref{thm:anisotropicgaussgreen}), we get that
\begin{align}
\int_\Omega (\z_1, D_{x_1} v) &+ \sum_{i=2}^k \int_\Omega \z_i \cdot \nabla_{x_i} v \, dx - \int_{\partial\Omega_1 \times \Omega_2} [\z_1(t), \nu_{x_1}] \, v \, d\mathcal{H}^{N-1} \\
&= - \int_\Omega v \, \mathrm{div}(\z) \, dx = \int_\Omega (u(t) - v) \, \mathrm{div}(\z) \, dx - \int_\Omega u(t) \, \mathrm{div}(\z) \, dx \\
&= \int_\Omega u_t(t) (u(t) - v) \, dx + \int_\Omega (\z_1, D_{x_1} u(t))  \\
&\qquad\qquad\qquad + \sum_{i=2}^k \int_\Omega \z_i \cdot \nabla_{x_i} u(t) \, dx - \int_{\partial\Omega_1 \times \Omega_2} [\z_1(t), \nu_{x_1}] \, u(t) \, d\mathcal{H}^{N-1} \\
&= \int_\Omega u_t(t) (u(t) - v) \, dx + \int_\Omega |D_{x_1} u(t)| \\
&\qquad\qquad\qquad + \int_{\partial\Omega_1 \times \Omega_2} |u(t)| \, d\mathcal{H}^{N-1} + \sum_{i = 2}^k \int_{\Omega} |\nabla_{x_i} u(t)|^{p_i} \, dx,
\end{align}
which concludes the proof.
\end{proof}

\begin{remark}
In the paper \cite{FVV}, the anisotropic $p$-Laplacian evolution equation is studied on the whole space $\mathbb{R}^N$ in place of a bounded domain with homogeneous Dirichlet data. It is not immediate if the proof above generalises to the case $\Omega = \mathbb{R}^N$ for the following reason: in Definition \ref{dfn:anzellottipairing}, the term
\begin{equation}
-\int_\Omega u \, \varphi \, \mathrm{div}(\z) \, dx - \sum_{i = 1}^k \int_\Omega u \, \z_i \cdot \nabla_{x_i} \varphi \, dx
\end{equation}
is well-defined once we ensure that the products $u \, \mathrm{div}(\z)$ and $u \, \z_i$ are integrable for all $i = 1, ..., k$. For the classical Anzellotti pairing, we assume that $\z \in L^\infty(\mathbb{R}^N; \mathbb{R}^N)$ and the integrability of the second term is not a problem; however, in our case the only information we get from the equation is that $\z_i \in L^{p'_i}(\mathbb{R}^N; \mathbb{R}^N)$ and a joint condition for the divergence, so in order for the second term to be well-defined we need to assume that $u \in L^{p_i}(\mathbb{R}^N)$ for all $i = 1,...,k$. For a bounded domain with homogeneous boundary data, this condition holds due to the Poincar\'e inequality and the embedding $L^{p_k}(\Omega) \subset L^{p_i}(\Omega)$ for all $i = 1,...,k$. However, even if we enforce a condition $u \in L^2(\mathbb{R}^N)$ to make the first term integrable, on an unbounded domain we do not know from the equation if $u$ has sufficient regularity, so in order to define solutions to the anisotropic $p$-Laplace evolution equation one would need to introduce a different version of Anzellotti pairings and adapt the definition accordingly.
\end{remark}

\begin{remark}
With essentially the same proof, we can study the following problem with mixed boundary conditions, in which we have Neumann boundary conditions on the linear-growth coordinates and a homogeneous Dirichlet condition on the superlinear-growth coordinates. To be precise, consider the problem
\begin{equation}\label{eq:mixedproblem}
\left\{ \begin{array}{lll} u_t (t,x) = \Delta_{(p_i)} u   \quad &\hbox{in} \ \ (0, T) \times \Omega; \\[5pt] u(t) = 0 \quad &\hbox{on} \ \ (0, T) \times  (\Omega_1 \times \partial\Omega_2); 
\\[5pt] \frac{\partial u}{\partial \eta} = 0 \quad &\hbox{on} \ \ (0, T) \times  (\partial\Omega_1 \times \Omega_2);
\\[5pt] u(0,x) = u_0(x) \quad & \hbox{in} \ \  \Omega, \end{array} \right.
\end{equation}
where $u_0 \in L^2(\Omega)$. The associated functional $\widetilde{\mathcal{F}}: L^2(\Omega) \rightarrow [0, + \infty]$ is given by
\begin{equation}
\widetilde{\mathcal{F}}(u):= \left\{ \begin{array}{ll} \displaystyle\int_\Omega |D_{x_1} u| + \sum_{i=2}^n \frac{1}{p_i} \int_\Omega |\nabla_{x_i} u|^{p_i} \, dx \quad &\hbox{if} \ u \in BV_{0,w}^{(p_i)}(\Omega) \cap L^2(\Omega); \\ \\ + \infty \quad &\hbox{if} \ u \in  L^2(\Omega) \setminus BV_{0,w}^{(p_i)}(\Omega).
\end{array}\right.
\end{equation}
Then, $\widetilde{\mathcal{F}}$ is convex, lower semicontinuous with respect to convergence in $L^2(\Omega)$, and its subdifferential in $L^2(\Omega)$ can be described as follows: $(u,v) \in \partial\widetilde{\mathcal{F}}$ if and only if $u, v \in L^2(\Omega)$, $u \in BV_{0,w}^{(p_i)}(\Omega)$ and there exists a vector field $\z = (\z_1, ..., \z_n) \in X^{(p'_i)}_2(\Omega)$ such that the following conditions hold:
\begin{equation} 
\| \z_1 \|_\infty \leq 1;
\end{equation}
\begin{equation}
(\z_1, D_{x_1} u) = |D_{x_1} u| \quad \hbox{as measures};
\end{equation}
\begin{equation} 
\z_i = |\nabla_{x_i} u|^{p_i - 2} \nabla_{x_i} u \quad \hbox{for all } i = 2,...,k;
\end{equation}
\begin{equation}
-\mathrm{div}(\z) = v \quad \hbox{in} \ \Omega;
\end{equation}
\begin{equation}
[\z_1, \nu_{x_1}] = 0 \qquad  \mathcal{H}^{N-1}-\mbox{a.e.  on} \ \partial \Omega_1 \times \Omega_2.
\end{equation}
Moreover, $\partial \widetilde{\mathcal{F}}$ is completely accretive and its domain is dense in $L^2(\Omega)$. The proof of this result is very similar to the one given in Theorem \ref{thm:subdifferentialgradientflow}. It employs the same strategy, function spaces, operators and functionals, the only differences being that the functional $E_0$ is identically zero (so the dual functional $E_0^*$ is finite only at $0$, which implies the condition $[\z_1, \nu_{x_1}] = 0$) and that all boundary terms disappear in the computations.

Then, by the Brezis-Komura theorem (Theorem \ref{thm:breziskomura}), we get the following existence and uniqueness result. Then, for every $u_0 \in L^2(\Omega)$, there exists a unique weak solution to problem \eqref{eq:mixedproblem} with initial datum $u_0$ in the following sense: $u \in C([0,T]; L^2(\Omega)) \cap W_{\rm loc}^{1,2}(0, T; L^2(\Omega))$, $u(0, \cdot) =u_0$, and for almost all $t \in (0,T)$ we have $u(t) \in BV_{0,w}^{(p_i)}(\Omega)$ and there exist vector fields $\z(t) \in X_2^{(p'_i)}(\Omega)$ such that the following conditions hold:
\begin{equation}
\| \z_1(t) \|_\infty \leq 1;
\end{equation}
\begin{equation}
(\z_1(t), D_{x_1} u(t)) = |D_{x_1} u(t)| \quad \hbox{as measures};
\end{equation}
\begin{equation}
\z_i(t) = |\nabla_{x_i} u(t)|^{p_i - 2} \nabla_{x_i} u(t) \quad \mathcal{L}^N-\hbox{a.e. in } \Omega \hbox{ for all } i = 2,...,k;
\end{equation}
\begin{equation}  
u_t(t) = \mathrm{div}(\z(t))\quad  \hbox{in} \ \mathcal{D}^\prime(\Omega);
\end{equation}
\begin{equation}
[\z_1(t), \nu_{x_1}] = 0 \qquad  \mathcal{H}^{N-1}-\mbox{a.e.  on} \ \partial \Omega_1 \times \Omega_2.
\end{equation}
Moreover, the following comparison principle holds: for all $r \in [1,\infty]$, if $u_1, u_2$ are weak solutions for the initial data $u_{1,0}, u_{2,0} \in L^2(\Omega) \cap L^r(\Omega)$ respectively, then
\begin{equation}
\Vert (u_1(t) - u_2(t))^+ \Vert_r \leq \Vert ( u_{1,0}- u_{2,0})^+ \Vert_r.
\end{equation}
\end{remark}

\section{The elliptic problem}\label{sec:elliptic}

Finally, we briefly comment on the elliptic problem for the anisotropic $p$-Laplacian operator in the case when on some of the coordinates we have linear growth. It has been previously studied in \cite{MRST} in the case with just two exponents; in our notation, this corresponds to $(p_1,p_2) = (1,q)$. The authors proved existence and characterisation of solutions using an approximation by solutions of the anisotropic $p$-Laplace equation with two exponents $p,q$ and passing to the limit $p \rightarrow 1$. In principle, the results from that paper can be generalised to the full anisotropic $p$-Laplacian operator with a similar technique; here, we instead present a short proof based on the Fenchel-Rockafellar duality theorem (Theorem \ref{thm:fenchelrockafellar}).

We keep the assumptions on the domain and exponents as in the previous Sections. Take $f \in L^{p'_k}(\Omega)$ and consider the following problem
\begin{equation}\label{eq:ellipticproblem}
\left\{ \begin{array}{lll} -\Delta_{(p_i)} u = f   \quad &\hbox{in} \ \ \Omega; \\[5pt] u = 0 \quad &\hbox{on} \ \ \partial\Omega.
\end{array} \right.
\end{equation}
This corresponds to the minimisation of the functional $\mathcal{J}: L^{p_k}(\Omega) \rightarrow \mathbb{R}$ given by
\begin{equation}
\mathcal{J}(u):= \left\{ \begin{array}{ll} \displaystyle 
\int_{\Omega} |D_{x_1} u| + \sum_{i=2}^k \frac{1}{p_i} \int_{\Omega} |\nabla_{x_i} u|^{p_i} \, dx + \int_{\partial\Omega_1 \times \Omega_2} |u| \, d\mathcal{H}^{N-1} - \int_{\Omega} fu \, dx \quad &\hbox{if} \ u \in BV_{0,w}^{(p_i)}(\Omega); \\ \\ + \infty \quad &\hbox{if} \ u \notin  BV_{0,w}^{(p_i)}(\Omega),
\end{array}\right.
\end{equation}
which is lower semicontinuous with respect to convergence in $L^{p_k}(\Omega)$ (recall that $BV_{0,w}^{(p_i)}(\Omega)$ embeds continuously into $L^{p_k}(\Omega)$ by Proposition \ref{prop:poincare}). Clearly, it is also convex, and again using Proposition \ref{prop:poincare} we see that for $u \in BV_{0,w}^{(p_i)}(\Omega)$
\begin{align}
\mathcal{J}(u) &\geq \frac{1}{p_k} \int_{\Omega} |\nabla_{x_k} u|^{p_k} \, dx - \int_{\Omega} fu \, dx \geq C \int_{\Omega} |u|^{p_k} \, dx - \int_{\Omega} fu \, dx \\
&\geq C \| u \|_{p_k}^{p_k} - \| f \|_{p'_k} \| u \|_{p_k} \geq \| u \|_{p_k} (C \| u \|_{p_k}^{p_k - 1} - \| f \|_{p'_k}) \geq - C \| f \|_{p'_k}^{1 + \frac{1}{p_k - 1}},
\end{align}
so $\mathcal{J}$ is bounded from below (even in the absence of any smallness assumptions on $f$). A similar computation shows that $\mathcal{J}$ is coercive: whenever $\mathcal{J}(u) \leq M$, we have
\begin{align}
C \| u \|_{p_k}^{p_k} = C \int_{\Omega} |u|^{p_k} \, dx \leq \frac{1}{p_k} \int_{\Omega} |\nabla_{x_k} u|^{p_k} \, dx \leq M + \int_{\Omega} fu \, dx \leq M + \| f \|_{p'_k} \| u \|_{p_k},
\end{align}
so either $\| u \|_{p_k} \leq 1$ or we divide both sides by $\| u \|_{p_k}$ and get
\begin{align}
C \| u \|_{p_k}^{p_k - 1} \leq M + \| f \|_{p'_k},
\end{align}
so $\mathcal{J}$ is coercive. By the direct method of the calculus of variations, it admits a minimiser.

Our definition of solutions will be based on the characterisation of the subdifferential of $\mathcal{J}$ in $L^{p_k}(\Omega)$. The dual space to $L^{p_k}(\Omega)$ is $L^{p'_k}(\Omega)$; in this duality, for any $u \in L^{p_k}(\Omega)$ we can define the subdifferential of the convex and lower semicontinuous functional $\mathcal{J}$ as follows:
\begin{equation*}
\partial \mathcal{J}(u) := \bigg\{ w \in L^{p'_k}(\Omega): \, \mathcal{J}(v) - \mathcal{J}(u) \geq \int_\Omega w (v - u) \, dx \quad  \forall \, v \in L^{p_k}(\Omega) \bigg\}.
\end{equation*}
Then, provided that $u \in BV_{0,w}^{(p_i)}(\Omega)$, the subdifferential $\partial\mathcal{J}(u)$ is a convex, closed and nonempty set (it is empty when $u \notin BV_{0,w}^{(p_i)}(\Omega)$). Moreover, $u$ is a minimiser of the functional $\mathcal{J}$ if and only if $0 \in \partial\mathcal{J}$, so the Euler-Lagrange equation associated to minimisation of $\mathcal{J}$ is
\begin{equation}\label{eq:eulerlagrangeequation}
0 \in \partial\mathcal{J}(u).
\end{equation}
We now give a precise characterisation of solutions to \eqref{eq:eulerlagrangeequation}.

\begin{definition}\label{dfn:solutionsellipticproblem}
We say that $u \in BV_{0,w}^{(p_i)}(\Omega)$ is a weak solution to problem \eqref{eq:ellipticproblem} if there exists a vector field $\z = (\z_1, ..., \z_n) \in X^{(p'_i)}_{p'_k}(\Omega)$ such that the following conditions hold:
\begin{equation}\label{eq:ellipticbound}
\| \z_1 \|_\infty \leq 1;
\end{equation}
\begin{equation}\label{eq:ellipticconditionforpairing}
(\z_1, D_{x_1} u) = |D_{x_1} u| \quad \hbox{\rm as measures};
\end{equation}
\begin{equation}\label{eq:ellipticconditiononzi}
\z_i = |\nabla_{x_i} u|^{p_i - 2} \nabla_{x_i} u \quad \mathcal{L}^N-\hbox{\rm a.e. in } \Omega \,\,\, \hbox{\rm for all } i = 2,...,k;
\end{equation}
\begin{equation}\label{eq:ellipticconditionfordivergence}
-\mathrm{div}(\z) = f \quad \hbox{\rm in} \ \Omega;
\end{equation}
\begin{equation}\label{eq:ellipticboundarycondition}
[\z_1, \nu_{x_1}] \in \mathrm{sign}(-u) \qquad  \mathcal{H}^{N-1}-\mbox{\rm a.e.  on} \ \partial \Omega_1 \times \Omega_2.
\end{equation}
\end{definition}

Naturally, this definition is similar to the one given in Definition \ref{dfn:operatorad}, the difference being that the function spaces are different and that $f$ takes the role of $v$ due to the additional term in $\mathcal{J}$. Therefore, in order to prove existence of solutions in this sense, we will employ a similar argument to the one in Theorem \ref{thm:subdifferentialgradientflow}, and for this reason we use the same notation for coordinates and skip the proofs of some intermediate claims.

\begin{theorem}\label{thm:existenceellipticproblem}
There exists a solution to problem \eqref{eq:eulerlagrangeequation} in the sense of Definition \ref{dfn:solutionsellipticproblem}.
\end{theorem}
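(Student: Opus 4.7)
The plan is to mimic Steps 1--5 of the proof of Theorem \ref{thm:subdifferentialgradientflow}, with the only substantive change being that the ``quadratic plus linear'' functional $G(u) = \tfrac{1}{2}\int u^2 - \int ug$ used there is replaced by the purely linear functional $G(u) = -\int_\Omega fu\,dx$. Since we have already shown that $\mathcal{J}$ is proper, convex, coercive and lower semicontinuous on $L^{p_k}(\Omega)$, the direct method of the calculus of variations immediately supplies a minimizer $u \in BV_{0,w}^{(p_i)}(\Omega)$, and the entire task is to extract a vector field $\mathbf{z}$ satisfying the conditions \eqref{eq:ellipticbound}--\eqref{eq:ellipticboundarycondition}.

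For the duality framework I would keep the triple $(U,V,A)$ and the functional $E$ exactly as in Steps 2--3 of that earlier proof, with $U = W_{0,w}^{1,(p_i)}(\Omega)$, $V = L^1(\partial\Omega_1 \times \Omega_2) \times L^1(\Omega;\mathbb{R}^{n_1}) \times \prod_{i=2}^{k} L^{p_i}(\Omega;\mathbb{R}^{n_i})$, and $Au = (u|_{\partial\Omega_1 \times \Omega_2}, \nabla_{x_1} u, \ldots, \nabla_{x_k} u)$. With $G$ now linear, its Legendre transform reduces to the indicator of the constraint that $A^* v^*$ act on $U$ as $-f$ does; as in Theorem \ref{thm:subdifferentialgradientflow}, this forces simultaneously $-\mathrm{div}(\mathbf{z}) = f$ in $\Omega$ and $v_0^* = -[v_1^*, \nu_{x_1}]$ on $\partial\Omega_1 \times \Omega_2$ when we set $\mathbf{z} = -(v_1^*, \ldots, v_k^*) \in X_{p'_k}^{(p'_i)}(\Omega)$. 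Applying the Fenchel--Rockafellar duality theorem at the admissible point $u_0 \equiv 0$ (where $E$ is continuous) produces a dual maximizer $\mathbf{z}$, which already satisfies \eqref{eq:ellipticbound} and \eqref{eq:ellipticconditionfordivergence}.

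To extract the remaining conditions, I would apply the $\varepsilon$-subdifferentiability relations \eqref{eq:epsilonsubdiff1}--\eqref{eq:epsilonsubdiff2} to a sequence $u_n \in W_{0,w}^{1,(p_i)}(\Omega)$ produced by the anisotropic Meyers--Serrin theorem (Proposition \ref{prop:meyersserrin}) from the minimizer $u$; in particular $u_n|_{\partial\Omega_1 \times \Omega_2} = u|_{\partial\Omega_1 \times \Omega_2}$ is constant along the sequence. The identity $v_0^* = [\mathbf{z}_1, \nu_{x_1}]$ (equivalent to $v_0^* = -[v_1^*, \nu_{x_1}]$ with $v_1^* = -\mathbf{z}_1$) combined with the sign condition $v_0^* \in \mathrm{sign}(-u)$ coming from the boundary term in \eqref{eq:epsilonsubdiff1} yields \eqref{eq:ellipticboundarycondition}. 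Next, applying the anisotropic Gauss--Green formula (Theorem \ref{thm:anisotropicgaussgreen}) to rewrite $\sum_i \int_\Omega \mathbf{z}_i \cdot \nabla_{x_i} u_n \, dx$ in terms of the pairing $(\mathbf{z}_1, D_{x_1} u)$ plus a remainder that vanishes as $n \to \infty$, the limit of the remaining $\varepsilon$-subdifferentiability inequality forces the nonnegative quantities
\begin{equation*}
\int_\Omega |D_{x_1} u| - \int_\Omega (\mathbf{z}_1, D_{x_1} u) \quad \text{and} \quad \int_\Omega \Big( \tfrac{1}{p_i} |\nabla_{x_i} u|^{p_i} + \tfrac{1}{p'_i} |\mathbf{z}_i|^{p'_i} - \mathbf{z}_i \cdot \nabla_{x_i} u \Big) \, dx
\end{equation*}
to vanish separately for every $i = 2, \ldots, k$; Proposition \ref{prop:propertiesofanzellottipairing} and the Young inequality then give \eqref{eq:ellipticconditionforpairing} and \eqref{eq:ellipticconditiononzi} respectively.

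The main obstacle is exactly the one already resolved in Theorem \ref{thm:subdifferentialgradientflow}: the minimizer $u$ generally fails to lie in the Sobolev space $W_{0,w}^{1,(p_i)}(\Omega)$ on which the Fenchel--Rockafellar framework is built, so the extremality conditions of the duality theorem cannot be invoked directly. The substitute $\varepsilon$-subdifferentiability argument relies crucially on the strongly anisotropic Anzellotti pairing and the Gauss--Green formula developed in Section \ref{sec:anisotropicspaces} to pass to the limit in $\int_\Omega \mathbf{z}_1 \cdot \nabla_{x_1} u_n \, dx$, since the full divergence $\mathrm{div}(\mathbf{z}) \in L^{p'_k}(\Omega)$ cannot be split into the directional divergences $\mathrm{div}_{x_i}(\mathbf{z})$.
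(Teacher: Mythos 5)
Your proposal is correct and follows essentially the same route as the paper: the identical duality framework $(U,V,A,E)$ from Theorem \ref{thm:subdifferentialgradientflow} with $G(u)=-\int_\Omega fu\,dx$, whose conjugate becomes the indicator of the divergence constraint, followed by Fenchel--Rockafellar at $u_0\equiv 0$ and the $\varepsilon$-subdifferentiability argument on a Meyers--Serrin approximation of the minimiser to recover conditions \eqref{eq:ellipticconditionforpairing}--\eqref{eq:ellipticboundarycondition}. The signs in your coordinate-wise nonnegative quantities are consistent with the substitution $\mathbf{z}=-(v_1^*,\ldots,v_k^*)$, and your identification of the key obstacle (the minimiser not lying in $W_{0,w}^{1,(p_i)}(\Omega)$) matches the paper's treatment.
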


\begin{proof} 
{\bf Step 1.} We present the problem in a framework adapted to the Fenchel-Rockafellar duality theorem (Theorem \ref{thm:fenchelrockafellar}). We restrict our attention to $W_{0,w}^{1,(p_i)}(\Omega)$ and set 
$$U = W_{0,w}^{1,(p_i)}(\Omega); \qquad V = L^1(\partial\Omega_1 \times \Omega_2) \times L^1(\Omega; \mathbb{R}^{n_1}) \times ... \times L^{p_n}(\Omega; \mathbb{R}^{n_k}). $$
The linear and continuous operator $A: U \rightarrow V$ is defined as
$$ Au = (u|_{\partial\Omega_1 \times \Omega_2}, \nabla_{x_1} u, ..., \nabla_{x_k} u)$$
and we set $E: V \rightarrow \mathbb{R}$ by the formula
\begin{equation}
E(v_0,v_1,...,v_k) = E_0(v_0) + \sum_{i=1}^k E_i(v_i),
\end{equation}
where
\begin{equation}
E_0(v_0) = \int_{\partial\Omega_1 \times \Omega_2} \vert v_0 \vert \,d\mathcal{H}^{N-1}
\end{equation}
and
\begin{equation}
E_i(v_i) = \frac{1}{p_i} \int_\Omega |v_i|^{p_i} \, dx.
\end{equation}
Then, $E$ is proper, convex and lower semicontinuous. We also set $G: W_{0,w}^{1,(p_i)}(\Omega) \rightarrow \mathbb{R}$ by
$$G(u):= - \int_\Omega f u \, dx$$
and see that it a proper, convex, and continuous functional.

{\flushleft \bf Step 2.} We now compute the convex conjugates of $E$ and $G$. The functionals $E_i^*$ are given by the same formulas as in the proof of Theorem \ref{thm:subdifferentialgradientflow}, i.e.
\begin{equation}
E_0^*(v_0^*) = \left\{ \begin{array}{ll} \displaystyle  0 \quad &\hbox{if} \ \  |v_0^*| \leq 1 \quad \mathcal{H}^{N-1}-\mbox{a.e. on } \partial\Omega_1 \times \Omega_2; \\[10pt] +\infty \quad &\hbox{otherwise,}   \end{array}  \right.
\end{equation}
for $i = 1$ we have
$$E_1^*(v_1^*) = \twopartdef{0}{\mbox{if } \| v_1^* \|_\infty \leq 1;}{+\infty}{\mbox{otherwise,}} $$
and for $i = 2, ..., k$ 
$$ E_i^*(v_i^*) = \frac{1}{p'_i} \int_\Omega |v_i^*|^{p'_i} \, dx.$$
Note that $A^* v^* =- \mathrm{div}((v_1^*, ..., v_k^*))$; in particular, $(v_1^*, ..., v_k^*) \in X_{p'_k}^{(p'_i)}(\Omega)$,
$$G^*(A^* v^*) = \left\{ \begin{array}{ll} \displaystyle  0 \quad &\hbox{if} \ \  \mathrm{div}((v_1^*, ..., v_k^*)) = f; \\[10pt] +\infty \quad &\hbox{otherwise,}   \end{array}  \right. $$
and we conclude as in the proof of Theorem \ref{thm:subdifferentialgradientflow} that $v_0^* = - [v_1^*, \nu_{x_1}]$.

{\flushleft \bf Step 3.} Now, consider the minimisation problem
\begin{equation}
\inf_{u \in U} \bigg\{ E(Au) + G(u) \bigg\}
\end{equation}
with $E$ and $G$ defined as above. For $u_0 \equiv 0$ we have $E(Au_0) = G(u_0) = 0 < \infty$ and $E$ is continuous at $0$. Then, Theorem \ref{thm:fenchelrockafellar} implies that the dual problem given by
\begin{equation}
\sup_{v^* \in V^*} \bigg\{ - E^*(-v^*) - G^*(A^* v^*) \bigg\}
\end{equation}
admits at least one solution and there is no duality gap, i.e. the infimum in the first problem is equal to the supremum in the second one. We first split the dual problem into coordinates and write
\begin{equation}
\sup_{v^* \in L^\infty(\partial\Omega_1 \times \Omega_2) \times L^\infty(\Omega; \mathbb{R}^{n_1}) \times ... \times L^{p'_k}(\Omega; \mathbb{R}^{n_k})} \bigg\{ - E_0^*(-v_0^*) - \sum_{i=1}^k E_i^*(-v_i^*) - G^*(A^* v^*) \bigg\}.
\end{equation}
Recall that if $G^*(A^* v^*)$ is finite, we necessarily have that $(v_1^*, ..., v_k^*) \in X_{p'_k}^{(p'_i)}(\Omega)$. Moreover, observe that $E_0^*$, $E_1^*$ and $G$ take only values $0$ and $+\infty$. From now on, denote by $\mathcal{Z}$ the subset of $V^*$ such that the dual problem does not immediately return $-\infty$, i.e.
\begin{align}
\mathcal{Z} = \bigg\{ v^* \in &L^\infty(\partial\Omega_1 \times \Omega_2) \times X_{p'_k}^{(p'_i)}(\Omega): \quad |v_1^*| \leq 1 \quad \mathcal{L}^N-\mbox{a.e in } \Omega; \\
& |v_0^*| \leq 1 \quad \mathcal{H}^{N-1}-\mbox{a.e. on } \partial\Omega_1 \times \Omega_2; \quad \mathrm{div}((v_1^*, ..., v_k^*)) = f; \quad v_0^* = -[v_1^*, \nu_{x_1}] \bigg\}.
\end{align}
Using the newly defined set $\mathcal{Z}$ and the explicit formulas for $E_i^*$ and $G$, we can rewrite the dual problem as
\begin{equation}\label{eq:dualelliptic}
\sup_{v^* \in \mathcal{Z}} \bigg\{ - \sum_{i = 2}^k \frac{1}{p'_i} \int_\Omega |v_i^*|^{p'_i} \, dx \bigg\}.
\end{equation}
Due to the constraint $v_0^* = - [v_1^*, \nu_{x_1}]$, the first coordinate of any solution to the dual problem is determined by the other coordinates. Therefore, if we set
$$
\mathcal{Z}' = \bigg\{ (v_1^*,...,v_k^*) \in  X_{p'_k}^{(p'_i)}(\Omega): \quad |v_1^*| \leq 1 \quad \mathcal{L}^N-\mbox{a.e in } \Omega; \quad \mathrm{div}((v_1^*, ..., v_k^*)) = f \bigg\},
$$
we may further simplify the dual problem to
\begin{equation}\label{eq:simplifieddual}
\sup_{(v_1^*,...,v_k^*) \in \mathcal{Z}'} \bigg\{ - \sum_{i = 2}^k \frac{1}{p'_i} \int_\Omega |v_i^*|^{p'_i} \, dx \bigg\}.
\end{equation}

{\flushleft \bf Step 4.} Finally, we show that whenever $v^*$ is a solution to the dual problem \eqref{eq:dualelliptic} and $u \in BV_{0,w}^{(p_i)}(\Omega)$ is a minimiser of the functional $\mathcal{J}$, then $\mathbf{z} = -(v_1^*,...,v_k^*)$ satisfies the conditions given in Definition \ref{dfn:solutionsellipticproblem}. It is clear that the divergence constraint \eqref{eq:ellipticconditionfordivergence} is satisfied and that $\| \z_1 \|_\infty \leq 1$. By the first subdifferentiability property \eqref{eq:epsilonsubdiff1}, we have
\begin{align}
0 \leq \int_{\partial\Omega_1 \times \Omega_2} |u_n| \, d\mathcal{H}^{N-1} &+ \sum_{i=1}^k \frac{1}{p_i} \int_\Omega |\nabla_{x_i} u_n|^{p_i} \, dx + \sum_{i=2}^k \frac{1}{p'_i} \int_\Omega |v_i^*|^{p'_i} \, dx \\
&+ \int_{\partial\Omega_1 \times \Omega_2} v_0^* u_n \, d\mathcal{H}^{N-1} + \sum_{i=1}^k \int_\Omega v_i^* \cdot \nabla_{x_i} u_n \, dx  \leq \varepsilon_n.
\end{align}
Since we assumed that the trace of $u_n$ on $\partial\Omega_1 \times \Omega_2$ is fixed and equal to the trace of $u$, the boundary integral does not change with $n$, so it is equal to zero. Therefore,
\begin{equation}
v_0^* \in \mathrm{sign}(-u) \qquad  \mathcal{H}^{N-1}-\mbox{a.e.  on} \ \partial \Omega_1 \times \Omega_2,
\end{equation}
and since $v_0^* = [-v_1^*, \nu_{x_1}]$, the condition \eqref{eq:ellipticboundarycondition} holds for $\mathbf{z} = -(v_1^*,...,v_k^*)$. Once we take into account that the boundary terms disappear, the first subdifferentiability property \eqref{eq:epsilonsubdiff1} yields
\begin{align}\label{eq:beforepassingtothelimitelliptic}
0 \leq \sum_{i=1}^k \frac{1}{p_i} \int_\Omega |\nabla_{x_i} u_n|^{p_i} \, dx + \sum_{i=2}^k \frac{1}{p'_i} \int_\Omega |v_i^*|^{p'_i} \, dx + \sum_{i=1}^k \int_\Omega v_i^* \cdot \nabla_{x_i} u_n \, dx  \leq \varepsilon_n,
\end{align}
and making a similar computation as in the proof of Theorem \ref{thm:subdifferentialgradientflow} we conclude that
\begin{align}
\int_\Omega |D_{x_1} u| + \int_{\Omega} (v_1^*, D_{x_1} u) \geq 0
\end{align}
and for each $i = 2,...,k$ we have a pointwise inequality
\begin{align}
\frac{1}{p_i} |\nabla_{x_i} u|^{p_i} + \frac{1}{p'_i} |v_i^*|^{p'_i} + v_i^* \cdot \nabla_{x_i} u \geq 0
\end{align}
almost everywhere in $\Omega$. Hence, the two inequalities above need to be equalities, so properties \eqref{eq:ellipticconditionforpairing} and \eqref{eq:ellipticconditiononzi} respectively hold for the choice $\z = -(v_1^*,...,v_k^*)$. Therefore, we proved that all the conditions \eqref{eq:ellipticbound}-\eqref{eq:ellipticboundarycondition} in Definition \ref{dfn:solutionsellipticproblem} hold and consequently $u$ is a solution to problem \eqref{eq:ellipticproblem}.
\end{proof}

\begin{remark}
Using a similar argument, it is possible to prove existence of solutions to problem \eqref{eq:ellipticproblem} in the sense of Definition \ref{dfn:solutionsellipticproblem} whenever $f \in L^{\overline{p}'}(\Omega)$ and the domain $\Omega$ is such that there is an embedding $BV_{0,w}^{(p_i)}(\Omega) \hookrightarrow L^{\overline{p}}(\Omega)$. We skip the proof as it is almost identical to the one above. Note that the exponent $\overline{p}'$ coincides with the exponent given in \cite{MRST} for problem \eqref{eq:introductionellipticproblem}.
\end{remark}

\bigskip

\noindent {\bf Acknowledgments.} The author would like to thank prof. Segura de Le\'on for fruitful discussions concerning this paper. This research was funded partially by the Austrian Science Fund (FWF), grant ESP 88. The author has also been partially supported by the OeAD-WTZ project CZ 01/2021. For the purpose of open access, the author has applied a CC BY public copyright licence to any Author Accepted Manuscript version arising from this submission.

\end{document}